\documentclass[reqno,twoside]{article}

\usepackage{amssymb}
\usepackage{amsmath}
\usepackage{amsthm}
\usepackage{letterswitharrows}
\usepackage{mathrsfs}
\usepackage{graphicx}
\usepackage{latexsym}
\usepackage{stmaryrd}
\usepackage{enumitem}
\setlist{topsep=0.3em, itemsep=-0.3em}
\usepackage{moreenum}
\usepackage{authblk}
\usepackage[bookmarks,hyperfootnotes=false, psdextra=true]{hyperref}
\usepackage{multirow}
\usepackage[dvipsnames]{xcolor}
\usepackage{caption}
\colorlet{darkishRed}{red!60!black}
\colorlet{darkishBlue}{blue!60!black}
\colorlet{darkishGreen}{green!50!black}
\colorlet{lightishGreen}{green!70!black}
\hypersetup{
    draft = false,
    bookmarksopen=true,
    colorlinks,
    linkcolor={darkishBlue},
    citecolor={lightishGreen},
    urlcolor={darkishBlue}
}
\usepackage[nameinlink, capitalise, noabbrev]{cleveref}
\usepackage{thmtools}
\usepackage{nameref}
\crefformat{enumi}{#2#1#3}
\crefformat{equation}{#2(#1)#3}
\crefrangeformat{section}{Sections~#3#1#4--#5#2#6}
\crefname{mainresult}{Theorem}{Theorems}
\crefname{maincorollary}{Corollary}{Corollaries}
\let\setminus=\smallsetminus
\usepackage{tikz}
\usepackage{tikz-cd}
\usetikzlibrary{calc,through,intersections,arrows, trees, positioning, decorations.pathmorphing, cd}
\usepackage{comment}
\usepackage{mathtools}
\usepackage{nccmath}
\usepackage{pifont}
\usepackage[utf8]{inputenc}
\usepackage[T1]{fontenc}
\usepackage{lmodern}
\usepackage[babel]{microtype}
\usepackage[english]{babel}
\usepackage{relsize}

\usepackage{caption}

\usetikzlibrary{decorations.markings,arrows.meta}

\usepackage{geometry}
\let\setminus=\smallsetminus
\renewcommand{\leq}{\leqslant}
\renewcommand{\geq}{\geqslant}
\renewcommand{\ge}{\geq}
\renewcommand{\le}{\leq}

\let\rho=\varrho
\let\phi=\varphi

\newcommand{ \N } { \mathbb{N} }

\newcommand{\defn}[1]{{\color{darkishRed}{\emph{#1}}}}
\newcommand{\defnm}[1]{{\color{darkishRed}{#1}}}

\makeatletter

\def\calCommandfactory#1{%
   \expandafter\def\csname c#1\endcsname{\mathcal{#1}}}
\def\frakCommandfactory#1{%
   \expandafter\def\csname frak#1\endcsname{\mathfrak{#1}}}

\newcounter{ctr}
\loop
  \stepcounter{ctr}
  \edef\X{\@Alph\c@ctr}
  \expandafter\calCommandfactory\X
  \expandafter\frakCommandfactory\X
  \edef\Y{\@alph\c@ctr}
  \expandafter\frakCommandfactory\Y
\ifnum\thectr<26
\repeat

\setenumerate{label={\normalfont (\roman*)}}

\newtheorem{theorem}{Theorem}[section] 
\newtheorem{corollary}[theorem]{Corollary}
\newtheorem{lemma}[theorem]{Lemma}

\newtheorem{observation}[theorem]{Observation}

\newtheorem{problem}[theorem]{Problem}
\newtheorem{mainresult}{Theorem}
\newtheorem{maincorollary}[mainresult]{Corollary}
\newtheorem{mainconjecture}[mainresult]{Conjecture}

\newtheorem{claim}{Claim}
\crefname{claim}{Claim}{Claims}
\AtEndEnvironment{proof}{\setcounter{claim}{0}}

\newenvironment{claimproof}{\noindent\textit{Proof.}}{\hfill\ensuremath{\blacksquare}\medskip}
\usepackage{etoolbox}

\theoremstyle{definition}

\newtheorem{definition}[theorem]{Definition}

\theoremstyle{remark}

\usepackage[skip=6pt]{subcaption} 
\usepackage{etoolbox}
\newbool{pdfBool}
\booltrue{pdfBool} 

\newcommand{\pdfOrNot}[2]{\ifbool{pdfBool}{{#1}}{{#2}}}

\usepackage{svg}

\newbool{arXiv}
\booltrue{arXiv} 

\newcommand{\arXivOrNot}[2]{\ifbool{arXiv}{{#1}}{{#2}}}

\usepackage{csquotes}
\usepackage[backend=biber, style=alphabetic, sorting=nyt, maxnames=99, maxalphanames=99, giveninits=true, sortcites=true]{biblatex}
\usepackage{authblk}

\usepackage{amsmath}
\usepackage{fancyhdr}
\newcommand{\bivec}[1]{\overset{\scriptscriptstyle\leftrightarrow}{#1}}

\newsavebox{\otterbox}
\sbox{\otterbox}{\includegraphics[height=1em]{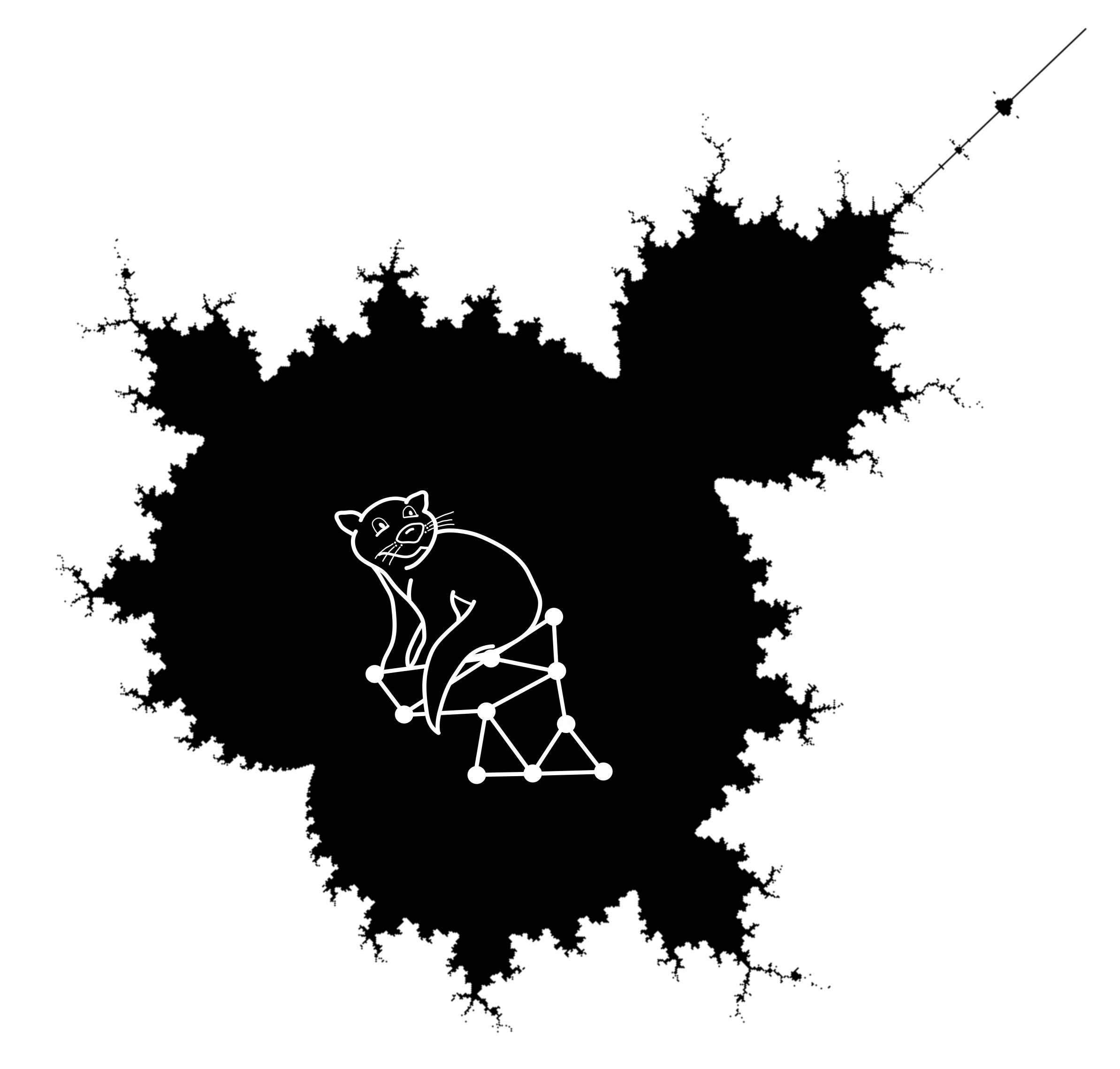}}

\makeatletter

\renewcommand{\@fnsymbol}[1]{%
  \ifcase#1\or
    \raisebox{-0.3ex}{\usebox{\otterbox}}%
  \or
    $\dagger$%
  \or
    $\ddagger$%
  \or
    \S
  \or
    \P
  \else
    *
  \fi
}

\makeatother

\title{Proof of Lichiardopol's conjecture on\\ disjoint directed cycles of distinct lengths}

\author[1]{Sandra Albrechtsen\thanks{Supported by the Alexander von Humboldt Foundation in the framework of the Alexander von Humboldt Professorship of Daniel Král' endowed by the Federal Ministry of Education and Research.}}

\affil[1]{Institute of Mathematics, Leipzig University, Augustusplatz 10, 04109 Leipzig, Germany\\ sandra@albrechtsen-mail.de}

\author[2]{Raphael Steiner\thanks{Research supported by the SNSF Ambizione Grant No. 216071 of the Swiss National Science Foundation.}}

\affil[2]{Department of Mathematics, ETH Zurich, R\"{a}mistrasse 101, 8092 Z\"{u}rich, Switzerland\\
raphaelmario.steiner@math.ethz.ch}

\begin{document}

\date{\vspace{-5ex}}

\maketitle

\begin{abstract}
There is a fascinating array of interrelated questions studying which structures can be guaranteed in digraphs of large minimum out-degree. These often have intriguingly simple statements, yet seem surprisingly difficult to approach. A well-known example is Lichiardopol's conjecture (2014), stating that there exists a function $g:\mathbb{N}\rightarrow \mathbb{N}$ such that every digraph with minimum out-degree at least $g(k)$ contains $k$ vertex-disjoint directed cycles of distinct lengths. 

In this paper, building on earlier work of the second author, we confirm this conjecture in full generality. We also generalise this result to a weighted setting. Our proof uses and combines many ingredients from structural digraph theory such as butterfly minors, directed tangles, a directed analogue of the Tangle-Wall Theorem due to Robertson and Seymour as well as a local variant of the Directed Flat Wall Theorem due to Giannopoulou, Kawarabayashi, Kreutzer and Kwon. These techniques, which are somewhat atypical in the study of minimum degree conditions, may be of independent interest and may find further applications.
\end{abstract}

{\bf{MSC 2020 Classification:}} 05C20, 05C38, 05C83

\section{Introduction}

Cycles are amongst the most fundamental and ubiquitous objects in graph theory. Therefore, the study of sufficient conditions for the existence of cycles (or collections of cycles) in graphs with various restrictions, such as on their intersections and (relative) lengths, has a long and rich history, but nevertheless remains very active today. A particular type of question in this line of research, which has attracted a lot of attention over the last half century, is the study of minimum degree conditions for the existence of \emph{disjoint cycles} in graphs (often with various additional properties), see~\cite{chibasurvey} for a survey of the large body of work on this topic. Generally speaking, for \emph{undirected} graphs many of the most natural and fundamental problems in this area have been answered in the past, for example the best-possible minimum degree conditions have been determined guaranteeing 
\begin{itemize}
    \item $k$ vertex-disjoint cycles~\cite{corradihajnal},
    \item $k$ vertex-disjoint cycles of equal lengths~\cite{alon,egawa,haggkvist},
    \item $k$ vertex-disjoint cycles of pairwise distinct lengths~\cite{bensmail}, and
    \item $k$ vertex-disjoint cycles of even lengths~\cite{chibaeven}.
\end{itemize}

In contrast, comparatively little is known about the analogous problems for \emph{directed} cycles in \emph{directed} graphs (digraphs for short), where even rather fundamental questions remain open. Note that in the directed setting, it is natural to require the minimum \emph{out-}degree $\delta^+(D)$ of the digraph $D$ in question to be large, rather than the minimum total degree, since there are very dense digraphs (such as transitive tournaments) that do not even contain a single directed cycle. Perhaps the most fundamental question is then to determine the smallest possible function $b:\mathbb{N}\rightarrow \mathbb{N}$ such that every digraph of minimum out-degree at least $b(k)$ contains $k$ vertex-disjoint directed cycles. Note that even the existence of such a function $b$ is not obvious at all, and this was first proved using a clever contraction trick in a seminal result by Thomassen~\cite{ThomassenDisjointCycles}. In the same paper, Thomassen also conjectured that the precise bound $b(k)=2k-1$ holds for every $k$; the same conjecture in fact appeared already earlier in the survey article~\cite{bermondthomassen} on cycles in digraphs by Bermond and Thomassen in 1981, and is nowadays famous as the (still open) \emph{Bermond-Thomassen conjecture}. See \cite{k3,alon,bucic} for some of the most important partial results towards this well-known conjecture.

\smallskip

Even less is known about the existence of vertex-disjoint directed cycles with additional properties. As previously mentioned, it is known that sufficiently high minimum degree forces many disjoint cycles of equal length in \emph{undirected} graphs, and Thomassen~\cite{ThomassenDisjointCycles} conjectured that the analogous statement is true for digraphs. Surprisingly, this was shown to be false by Alon~\cite{alon}, who constructed digraphs of arbitrarily high minimum out-degree which do not even contain two edge-disjoint cycles of the same length; the second author~\cite{rapha} later strengthened this by showing that even arbitrarily high strong vertex-connectivity does not guarantee two edge-disjoint directed cycles of the same length. 

In the opposite direction, Henning and Yeo~\cite{henningyeo} initiated the study of the problem of forcing vertex-disjoint directed cycles of \emph{distinct lengths}, and conjectured that every digraph with minimum out-degree at least~$4$ contains two vertex-disjoint directed cycles of distinct lengths. This conjectured bound (which is best possible) was confirmed by Lichiardopol~\cite{Lich14}. Lichiardopol then raised the natural question about packings of more directed cycles of distinct lengths, and made the following intriguing conjecture. 

\begin{mainconjecture}[{Lichiardopol \cite{Lich14}}] \label{con:lichiardopol}
    There exists a function $g: \N \to \N$ such that for every $k \in \N$ every digraph~$D$ with $\delta^+(D)\ge g(k)$ contains $k$ vertex-disjoint directed cycles of pairwise distinct lengths. 
\end{mainconjecture}

This conjecture has attracted a significant amount of attention over the last 12 years and has been reiterated several times, see ~\cite[Conjecture~26]{bensmail},~\cite[Conjecture~2.5.25]{bangjensen},~\cite[Conjecture~2]{rapha},~\cite[Conjecture~1.2]{H1.3},~\cite[Conjecture~1.2]{D1.2},~\cite[Conjecture~6.3]{gollin}. In a recent paper~\cite{gollin}, Gollin, Gorsky, Hatzel, Hendrey, Huynh, McFarland, Soko{\l}owski, Wiederrecht and Wollan describe the conjecture as `famously open'. 

Despite this attention, the conjecture has remained unresolved even for the small case $k=3$. Among noteworthy partial results, Bensmail, Harutyunyan, Le, Li and Lichiardopol~\cite{bensmail} proved the conjecture for tournaments and regular digraphs. 
Chen and Chang~\cite{D1.2} proved it for bipartite tournaments, Song and Yan~\cite{H1.3} proved it for semicomplete digraphs and digraphs where the maximum in-degree is upper-bounded by an exponential function of the minimum out-degree, and the second author~\cite{rapha} proved it for digraphs of bounded directed tree-width as well as for digraphs of sufficiently large strong vertex-connectivity.
See also~\cite{A,B,C,E,F,G} for further work on disjoint directed cycles of different lengths under out-degree conditions.

The main result of this paper is to settle Lichiardopol's conjecture in full generality.

\begin{mainresult} \label{main:CyclesDistinctLengths}
    There exists a function $g: \N \to \N$ such that for every $k \in \N$ every digraph~$D$ with $\delta^+(D) \geq g(k)$ contains $k$ vertex-disjoint directed cycles of pairwise distinct lengths.
\end{mainresult}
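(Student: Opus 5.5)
We plan to argue by a dichotomy on the directed tree-width of $D$. The second author showed in \cite{rapha} that the statement holds for digraphs of bounded directed tree-width. In outline, a bounded-width decomposition lets one find a small separator. After deleting it, one part still has large minimum out-degree relative to $k$, and induction on $k$ applies there; the remaining cycle is then placed disjointly on the other side. We would use this as a black box. It gives, for every $w$, a function $g_w(k)$ such that every digraph of directed tree-width at most $w$ and with $\delta^+\ge g_w(k)$ contains the desired $k$ cycles. The main case is therefore a digraph $D$ with $\delta^+(D)\ge g(k)$ and directed tree-width larger than some $w(k)$. By the Directed Grid Theorem (Kawarabayashi--Kreutzer), $D$ then contains a large cylindrical wall as a butterfly minor. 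Equivalently, $D$ has a highly linked directed tangle together with a wall $W$ that is controlled by it.

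Inside the tangle we want cycles whose lengths we can steer. First, a large wall supplies many pairwise disjoint ``loops'' through prescribed parts of the wall. Their lengths, however, depend on the subdivisions and are not a priori distinct. To gain control over the lengths, we would apply a local variant of the Directed Flat Wall Theorem of Giannopoulou--Kawarabayashi--Kreutzer--Kwon to the subdigraph near the wall. This yields a dichotomy.
\begin{enumerate}
\item[(a)] \emph{Non-flat outcome.} There are many disjoint ``jumps'' (crossings or handles) attached to the wall. Routing through a jump versus around it changes the length of a cycle by a controllable nonzero amount. Stacking such choices inside disjoint sectors of the wall, we would greedily build $k$ disjoint cycles: each new one is chosen from a family whose lengths range over more than $k$ distinct values, so we can always avoid the lengths already used.
\item[(b)] \emph{Flat outcome.} A large subwall $W'$ is flat, so all of the local structure around $W'$ is essentially planar. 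Here we use the out-degree hypothesis. Every vertex in the flat region has out-degree at least $g(k)$, so there are many out-neighbours, and the out-arcs must go somewhere. Planarity limits how many of them can stay inside the flat disc, since planar digraphs have a vertex of out-degree at most $5$. Hence some vertices near the middle of $W'$ send many arcs that leave the disc or pass through a bounded apex set. Deleting the apex set costs only a bounded amount of out-degree. Combined with flatness, these leaving arcs produce shortcuts, and with them paths between wall vertices of many different lengths, which gives the length variation needed to return to the argument of (a).
\end{enumerate}

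Once a single ``gadget'' with more than $k$ attainable cycle lengths is available, the assembly step is easy. Using the Tangle--Wall-type theorem we find $k$ disjoint such gadgets inside the wall, in pairwise disjoint sectors, and choose their lengths one after another so that they are all distinct. Finally, $g(k)$ is set to the maximum of $g_{w(k)}(k)$ and the thresholds demanded by the flat-wall theorem and the gadget construction. The only circularity is that $w(k)$ depends on the wall size needed for $k$ gadgets, and this is fine because the bounded-width result holds for every width.

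The main obstacle we expect is outcome (b). Flatness concerns undirected or butterfly structure, so we must show rigorously that a large minimum out-degree is incompatible with a large flat region that is free of length variation. This requires turning the leaving out-arcs into length-changing detours that do not disturb the wall structure. Our first attempt would be to contract the flat disc's society. We would then show that a flat wall whose vertices all have large out-degree into the rest of the graph forces either a large ``transversal'' linkage, which gives non-flatness and contradicts (b), or an arc set whose endpoints lie at many different wall distances, which gives the length variation.
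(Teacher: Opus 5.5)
Your outcome (b) is not just the hardest step. As formulated it is false, because nothing in your plan controls which wall you end up with. Weak flatness only restricts paths that leave $W'$ and return to it. It says nothing about out-arcs of wall vertices that enter a part of $D-A$ from which $W'$ is not reachable at all.

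Here is a concrete example.
\begin{itemize}
\item Take a large cylindrical wall $W$ all of whose directed cycles have the same length (such walls exist, see \cite[Remark~3.1]{rapha}).
\item Take a disjoint digraph $H$ with $\delta^+(H)\ge g(k)$, and let every vertex of $W$ send $g(k)$ arcs into $H$.
\item Add one vertex $a$ that receives arcs from $H$ and sends $g(k)$ arcs into $W$.
\end{itemize}
Then $\delta^+(D)\ge g(k)$, the directed tree-width is large, and $W$ is weakly flat in $D-a$. Yet every directed cycle meeting $W$ lies in $W$ or passes through $a$, so pairwise disjoint cycles meeting $W$ realise at most two lengths. The leaving arcs yield no shortcuts. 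Neither a transversal linkage nor arcs landing at many wall distances appear. For $k\ge 3$, most of the required cycles must be found inside $H$, away from the wall your argument is anchored at.

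Two smaller issues as well. Weak flatness does not make the vicinity of $W'$ planar, since arbitrary pieces may hang off a single brick, so the out-degree-$5$ argument has no traction. In (a), a jump may have exactly the length of the wall segment it bypasses, so the length change need not be nonzero. The flat-case tool that does work is \cref{lem:walltrains}. It requires exactly what your plan lacks: strong connectivity of the region reachable from $W'$ in $D-A$.

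The missing idea is to choose the wall \emph{downstream}.
\begin{itemize}
\item The paper first inducts on $k$. If $D$ has disjoint subdigraphs $D_1,D_2$ with $\delta^+(D_1)\ge g(k-1)$ and $\delta^+(D_2)\ge k$, combine $k-1$ disjoint cycles of distinct lengths in $D_1$ with a suitable one of the $k$ cycles of distinct lengths in $D_2$ given by \cref{lem:trains}.
\item Otherwise, orienting every separation of order $<d/4$ towards the heads of its crossing arcs is a tangle (\cref{lem:DefaultOrientationIsATangle}).
\item \cref{thm:CylindricalWallControlledByTangle} then gives a wall $W$ with a row on the downstream strict side of every such separation.
\end{itemize}
The ``highly linked tangle'' you invoke must be this specific one. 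A tangle obtained merely from large directed tree-width, such as the one induced by $W$ in the example, points towards $W$ across $\{V(W)\cup\{a\},V(H)\cup\{a\}\}$, which is the wrong side.

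The local flat wall theorem applied to this $W$ gives one of two outcomes:
\begin{itemize}
\item a $\bivec{K}_t$ butterfly minor, which is settled by \cref{lem:butterfly} (so no jump-length argument is needed); or
\item a set $A$ and a weakly flat $W'\subseteq W-A$.
\end{itemize}
In the second case, let $R$ be the set of vertices reachable from $W'$ in $D-A$. Then $\delta^+(D[R])\ge g(k)-|A|$. If $D[R]$ is strongly connected, \cref{lem:walltrains} finishes. Otherwise there is a split $R=S\,\dot\cup\,T$ with no arcs from $T$ to $S$. This yields the separation $\{V(D)\setminus T,\,T\cup A\}$ of order $|A|$, with no arc crossing from $T\cup A$ to $V(D)\setminus T$. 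Tangle control together with \cref{obs:wallintersection} forces $W'\subseteq D[T]$. That contradicts $S$ being reachable from $W'$ in $D-A$.
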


It is natural to ask whether~\cref{con:lichiardopol} is more generally true in a \emph{weighted} setting, where vertices or edges are assigned positive real weights, and one looks for vertex-disjoint directed cycles of distinct total weights. For edge-weightings, this question was studied already by Henning and Yeo in 2012, who proved that, rather surprisingly, there exist digraphs of arbitrarily high minimum out-degree whose edges can be weighted by positive integers such that all directed cycles have the same weight (cf.~\cite[Theorem~7]{henningyeo}). Hence, the edge-weighted generalisation fails badly. By a reduction to~\cref{main:CyclesDistinctLengths}, we can show that in contrast, the vertex-weighted generalisation of~\cref{con:lichiardopol} \emph{does} hold.

\begin{maincorollary} \label{cor:vertexweights}
    Let $g:\mathbb{N}\rightarrow \mathbb{N}$ be the function from~\cref{main:CyclesDistinctLengths}. Then for every $k\in \mathbb{N}$, every digraph $D$ with $\delta^+(D)\ge g(k)$, and every strictly positive vertex-weighting $w:V(D)\rightarrow \mathbb{R}_+$, there exist $k$ vertex-disjoint directed cycles in $D$ with pairwise distinct total weights according to $w$.
\end{maincorollary}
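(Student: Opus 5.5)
The plan is to reduce \cref{cor:vertexweights} to \cref{main:CyclesDistinctLengths} in two steps: first replace the real weights by positive integer weights without creating new coincidences between cycle weights, and then realise integer weights as lengths by blowing up each vertex into a gadget. I have not resolved the second step; see the last paragraph.

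\emph{Step 1: integer weights with the same coincidences.} Let $\mathcal{C}$ be the finite set of directed cycles of $D$. Consider the equalities $w(C)=w(C')$ holding for pairs $C,C'\in\mathcal{C}$. Each is a linear equation in $(w(v))_{v\in V(D)}$ with integer coefficients. Let $U\subseteq\mathbb{R}^{V(D)}$ be the solution space of all these equations; it is a rational subspace and contains $w$. Each pair with $w(C)\neq w(C')$ gives a proper rational hyperplane section of $U$. The open positive orthant also meets $U$ in a relatively open set containing $w$. Rational points are dense in $U$, so I would pick a rational $q\in U$ close to $w$ that is strictly positive and avoids these finitely many hyperplanes. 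Then $q(C)=q(C')$ holds exactly when $w(C)=w(C')$. Multiplying by a common denominator gives positive integer weights $n:V(D)\to\mathbb{N}$ with the same property. Hence it suffices to prove \cref{cor:vertexweights} for positive integer weights.

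\emph{Step 2: weights as lengths.} Let $d=g(k)$. Build $D'$ by replacing each vertex $v$ with layers $L_1^v,\dots,L_{n(v)}^v$, each an independent set of $d$ vertices. Add all edges from $L_i^v$ to $L_{i+1}^v$, and all edges from $L_{n(v)}^v$ to $L_1^u$ for every out-neighbour $u$ of $v$. Every vertex of $D'$ then has out-degree at least $d$, since $\delta^+(D)\ge d$ and $|L_1^u|=d$. Any directed cycle of $D'$ that enters the gadget of $v$ must run through all $n(v)$ layers in order before leaving. So a cycle $C'$ in $D'$ projects to a closed walk $W$ in $D$ whose $n$-weight equals $|C'|$. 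Applying \cref{main:CyclesDistinctLengths} to $D'$ yields $k$ disjoint cycles of pairwise distinct lengths, hence $k$ closed walks in $D$ of pairwise distinct $n$-weights.

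\emph{Main obstacle.} Two cycles of $D'$ can pass through different vertices of the same gadget, and one cycle can visit a gadget twice. So the projected walks need be neither cycles nor disjoint. This is the step I expect to be hardest.

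The first thing I would try is to shrink the gadget so that each cycle of $D'$ enters a gadget only through a single vertex. Concretely, take $|L_1^v|=1$ and use wide layers only further along, so that the gadget still has out-degree at least $d$ at every vertex. With a single entry vertex $v_1$, a cycle visits $v_1$ at most once and hence visits the gadget of $v$ at most once. It follows that disjoint cycles of $D'$ project to disjoint cycles of $D$, and the length of each cycle equals its $n$-weight. The difficulty is then to design the interior of the gadget so that it has out-degree at least $d$ everywhere and yet every entry-to-exit path through it has exactly $n(v)$ vertices.

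If no such exact gadget exists, I would instead allow bounded slack. Two ideas to try: make the number of vertices used inside each gadget take only values in $\{n(v), n(v)+1\}$, and absorb this slack into Step 1 by scaling $q$ by a factor much larger than $|V(D)|$; or apply \cref{main:CyclesDistinctLengths} with a larger parameter in place of $k$ and then select $k$ of the resulting cycles whose weights are pairwise separated by more than the slack.
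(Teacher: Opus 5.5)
\textbf{Gap: Step~2 is left open.} Your Step~1 is correct and is a valid alternative to the paper's real-to-integer reduction. The paper argues by contradiction: for each collection $K$ of $k$ disjoint cycles it fixes a pair $C^1_K,C^2_K$ of equal weight, and uses that the feasible rational system $x(C^1_K)=x(C^2_K)$, $x_v\ge 1$ has a rational (hence, after scaling, integral) solution. You instead pick a rational point of the rational subspace $U$ near $w$, which is direct and even preserves the whole coincidence pattern.

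The genuine gap is Step~2, which you explicitly leave unresolved. Your two fallbacks would not close it. With slack, two cycles of \emph{equal} weight may receive \emph{different} lengths. That destroys exactly the implication you need (distinct lengths in $D'$ $\Rightarrow$ distinct weights in $D$), however much you scale. Selecting $k$ lengths pairwise separated by more than the slack (which can be as large as $|V(D)|$) would require applying \cref{main:CyclesDistinctLengths} with a parameter depending on $|V(D)|$ rather than on $k$ alone. That needs out-degree beyond $g(k)$.

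\textbf{The missing observation.} The gadget in your ``first thing I would try'' already works, with no further design problem; it is exactly the paper's construction. Give each $v$ a single entry vertex $v_1$ (the paper uses $v$ itself). Follow it with $n(v)-1$ layers $L_2^v,\dots,L_{n(v)}^v$ of $d=g(k)$ fresh vertices each. Add all edges from each layer to the next, and all edges from the last layer (from $v_1$ if $n(v)=1$) to the entry vertices $u_1$, $u\in N^+_D(v)$. Then $v_1$ (if $n(v)\ge 2$) and all non-final layer vertices have out-degree $d$. Vertices of the final layer (or $v_1$ if $n(v)=1$) have out-degree $d^+_D(v)\ge d$.

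Edges inside a gadget only go from layer $i$ to layer $i+1$, and edges between gadgets only enter entry vertices. Hence $D'$ minus the entry vertices is acyclic, and every path from $v_1$ through its gadget to the next entry vertex has exactly $n(v)$ edges. So every cycle $C'$ of $D'$ meets entry vertices, and meets each of them at most once. Consecutive entry vertices $v_1,u_1$ on $C'$ satisfy $u\in N^+_D(v)$. Since $D$ has no loops, $C'$ visits at least two entry vertices, and these form a directed cycle of $D$ whose $n$-weight equals $|C'|$. Disjoint cycles of $D'$ have disjoint sets of entry vertices, so they project to disjoint cycles of $D$. Together with your Step~1, this finishes the proof.
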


This has the following consequence, showing that the edge-weighted generalisation holds if the edge-weighting only takes on boundedly many values.

\begin{maincorollary} \label{cor:edgeweights}
    Let $g:\mathbb{N}\rightarrow \mathbb{N}$ be the function from~\cref{main:CyclesDistinctLengths}. Then for every $k,m\in \mathbb{N}$, every digraph $D$ with $\delta^+(D)\ge g(k)\cdot m$, and every edge-weighting $w:E(D)\rightarrow S$ for some set $S\subseteq \mathbb{R}_+$ with $|S|=m$, there exist $k$ vertex-disjoint directed cycles in $D$ with pairwise distinct total weights according to $w$.
\end{maincorollary}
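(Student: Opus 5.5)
The plan is to reduce \cref{cor:edgeweights} to \cref{cor:vertexweights}. The idea is to pass to a spanning subdigraph in which all out-edges of each vertex carry the same weight. In such a subdigraph the edge-weight of a cycle becomes a vertex-weight.

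\textbf{Step 1 (choosing a colour class at each vertex).} Fix $k,m$, a digraph $D$ with $\delta^+(D)\ge g(k)\cdot m$, and $w:E(D)\to S$ with $|S|=m$. For each vertex $v\in V(D)$, the out-edges of $v$ are partitioned into at most $m$ classes according to their weight in $S$. By pigeonhole, some $s_v\in S$ is the weight of at least $\lceil \delta^+(D)/m\rceil\ge g(k)$ out-edges of $v$. Let $D'$ be the spanning subdigraph of $D$ consisting of all vertices and, for each $v$, exactly the out-edges of $v$ of weight $s_v$. Then $\delta^+(D')\ge g(k)$.

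\textbf{Step 2 (transfer to vertex weights).} Define $w':V(D')\to\mathbb{R}_+$ by $w'(v):=s_v$. This is strictly positive because $S\subseteq\mathbb{R}_+$. Every directed cycle $C$ of $D'$ uses exactly one out-edge of each of its vertices. Each such edge has weight $s_v$ under $w$, so
\[
\textstyle w(E(C))=\sum_{v\in V(C)} s_v = w'(V(C)).
\]
Thus for cycles of $D'$, the total edge-weight under $w$ coincides with the total vertex-weight under $w'$.

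\textbf{Step 3 (apply the corollary).} Applying \cref{cor:vertexweights} to $D'$ with the weighting $w'$ yields $k$ vertex-disjoint directed cycles in $D'$ with pairwise distinct $w'$-weights. These are also cycles of $D$, and by Step 2 they have pairwise distinct total $w$-weights, as required.

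There is no real obstacle in this argument. The only point needing care is that the construction of $D'$ keeps, at each vertex, only out-edges of a single weight, which is exactly what makes the identity in Step 2 hold. The whole difficulty of the statement is therefore carried by \cref{cor:vertexweights}, and ultimately by \cref{main:CyclesDistinctLengths}.
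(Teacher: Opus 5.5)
Your proposal is correct and is essentially the paper's own proof. The paper also uses pigeonhole to keep at each vertex only the out-edges of one weight $s_v$, sets $w'(v):=s_v$ so that edge-weights and vertex-weights of cycles in $D'$ coincide, and then applies \cref{cor:vertexweights} to $D'$.
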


\paragraph{Some remarks on the proof of Theorem 2 and related problems.}
If one does not insist on the obtained cycles in~\cref{con:lichiardopol} being vertex-disjoint, a simple argument (cf.~\cite{bensmail,rapha}) shows that every digraph with minimum out-degree at least~$k$ contains $k$ directed cycles of pairwise distinct lengths. Given this, it is tempting to try and prove \cref{con:lichiardopol} by simply trying to find $k$ vertex-disjoint subdigraphs, each with minimum out-degree at least $k$. One could then greedily pick a directed cycle of length distinct from the previously picked cycles, one per subdigraph, and the disjointness of these subdigraphs would then guarantee that one eventually obtains a collection of $k$ vertex-disjoint cycles of pairwise distinct lengths. 

However, it turns out to be an old, amazingly difficult and well-known open problem, posed independently by Alon and Stiebitz~\cite{stiebitzdigraph,alon,alonsplitting}, to determine whether there exists, for every $s\in \mathbb{N}$, a number~$F(s)$ such that every digraph of minimum out-degree at least~$F(s)$ contains two vertex-disjoint subdigraphs of minimum out-degree at least $s$. See also the recent paper~\cite{noteonsplitting}, reducing this problem to the case $s=2$. In fact, despite being a natural statement, several researchers in the community seem to seriously doubt its correctness. As such, this avenue for attack towards Lichiardopol's conjecture remains wide open and seems well beyond the reach of the current methods in the area. 

Another natural avenue to solve~\cref{con:lichiardopol} would be to show an \emph{Erd\H{o}s-P\'{o}sa-type} statement for disjoint directed cycles of distinct lengths. For example,~\cref{con:lichiardopol} would follow if one could show that there exists a function $h:\mathbb{N}\rightarrow \mathbb{N}$, such that for every $k\in \mathbb{N}$ and every digraph $D$ without $k$ vertex-disjoint directed cycles of distinct lengths, one can delete a set of at most $h(k)$ vertices from $D$ such that the resulting digraph has at most $h(k)$ distinct directed cycle lengths. While this sounds very natural, Gollin et al.~\cite{gollin} recently proved that rather surprisingly, no such function $h$ exists.

Our proof of \cref{main:CyclesDistinctLengths} follows a different route: It builds on and significantly enhances the proof approach followed by the second author in~\cite{rapha}. As such, it combines various ingredients from structural digraph theory. In particular, our proof uses butterfly minors (a by now standard notion of minors in digraphs), directed tangles~\cite{GKKKDirectedToT}, a directed analogue of the \emph{Tangle-Wall Theorem} due to Robertson and Seymour~\cite{GMX}, and a local variant of the \emph{Directed Flat Wall Theorem}~\cite{DirectedFlatWall}. In particular, the usage of directed tangles and the directed tangle-wall theorem are key new ingredients of our proof not used in~\cite{rapha}, which crucially `guide' the proof towards a choice of a `good', well-connected wall, to which previously discovered arguments from~\cite{rapha} can then be applied successfully. (See \cref{subsec:ProofSketch} for a more detailed proof sketch.)

We remark that while we do not compute the function $g$ in~\cref{main:CyclesDistinctLengths} explicitly, one could derive a \emph{computable} function $g$ satisfying the theorem statement by appropriately combining the corresponding results from the literature used in the proof. Since any bound obtained in this way would have to rely on the best known bounds for the \emph{Directed Grid Theorem}~\cite{KKDirectedGridThm}, which are currently a power tower of height~$22$ \cite{HKMM24}, we decided not to optimize the bound for the function~$g$ in~\cref{main:CyclesDistinctLengths}.

\paragraph*{Basic notation and terminology.} We denote by $\defnm{\mathbb{N}}$ the set of natural numbers (excluding $0$). For $k\in \mathbb{N}$, we denote by $\defnm{[k]}=\{1,\ldots,k\}$ the set of the first $k$ integers. All digraphs in this paper are finite and simple, but anti-parallel pairs of edges are allowed. For a digraph $D$, we denote by \defn{$V(D)$} its vertex set and by $\defnm{E(D)}\subseteq \{(u,v)\in V(D)^2|u\neq v\}$ its edge set. For an edge $(u,v)\in E(D)$, we think of it as being directed from $u$ to $v$ and refer to $u$ as the \defn{startpoint} of the edge, and to $v$ as the \defn{endpoint} of the edge.
For a vertex $v\in V(D)$, we denote by \defn{$N_D^+(v)$, $N_D^-(v)$} its out- and in-neighbourhood, respectively, and by \defn{$d_D^+(v)$, $d_D^-(v)$} its out- and in-degree. In each case, we omit the subscript $D$ when it is clear from context. We define the \defn{minimum/maximum out-/in-degree} of~$D$ as $\defnm{\delta^+(D)} := \min_{v\in V(D)}d_D^+(v)$, $\defnm{\Delta^+(D)} := \max_{v\in V(D)}d_D^+(v)$, $\defnm{\delta^-(D)} := \min_{v\in V(D)}d_D^-(v)$, and $\defnm{\Delta^-(D)} := \max_{v\in V(D)}d_D^-(v)$, respectively. For a subset of vertices $X$ of a digraph $D$, we denote by \defn{$D[X]$} the induced subdigraph with vertex set $X$ and we use the notation $\defnm{D-X} := D[V(D)\setminus X]$ for deleting sets of vertices. For $t\in \mathbb{N}$, we denote by \defn{$\bivec{K}_t$} the complete digraph of order~$t$, i.e. the (up to isomorphism) unique digraph on $t$~vertices containing all possible $t(t-1)$ ordered pairs of distinct vertices as edges.

\paragraph*{Organisation.}
In~\cref{sec:Prelims} we start by introducing important terminology and several key definitions from structural digraph theory that will be used throughout the paper. In the following~\cref{sec:aux} we then collect various auxiliary results from the literature that we shall employ in our proofs. This includes a `local' variant of a weakening of the \emph{Directed Flat Wall Theorem} due to Giannopoulou, Kawarabayashi, Kreutzer and Kwon~\cite{DirectedFlatWall}, as well as some results from~\cite{rapha}. In the same section, we also give a more detailed overview/sketch of our proof of \cref{main:CyclesDistinctLengths}. 
In the next section,~\cref{subsec:DirectedTangles}, we then prove two auxiliary results about directed tangles, which are key ingredients in our proof. First, in~\cref{subsec:THEtangle} we prove a key novel result, stating that every digraph of large minimum out-degree which does not contain two vertex-disjoint subdigraphs of still large minimum out-degree, has the property that orienting every low-order vertex-separation according to the `directions of the edges' across the separation, gives rise to a directed tangle.
Second, in~\cref{subsec:CylindWallControlledByTangle} we prove the natural directed analogue of the classical Tangle-Wall Theorem due to Robertson and Seymour~\cite{GMX}, saying that every high-order tangle controls a large wall. We deduce this result as a consequence of known results from the literature on structural digraph theory.
Finally, in \cref{sec:proof} we present the proof of our main result,~\cref{main:CyclesDistinctLengths}, which combines all the aforementioned ingredients and introduces further new ideas. 

After that in~\cref{sec:weighted}, we present the reductions that allow us to deduce the `weighted'~\cref{cor:vertexweights,cor:edgeweights} from~\cref{main:CyclesDistinctLengths}, and finish with an open problem in~\cref{sec:conc}.

\section{Preliminaries} \label{sec:Prelims}

In this section, we collect several basic definitions and terminology from structural digraph theory that will be used repeatedly throughout the paper, and include some basic results and observations. We start by recalling the definition of (vertex-)separations in digraphs.

Let $D$ be a digraph and $A, B \subseteq V(D)$. We say that an edge $e \in E(D)$ \defn{crosses} from $A$ to $B$ if its startpoint is in $A \setminus B$ and its endpoint is in $B \setminus A$. 

\begin{definition}[Directed separation]
    A \defn{(directed) separation} of a digraph $D$ is an unordered pair $\{A,B\}$ of subsets $A,B$ of~$V(D)$ such that $A \cup B = V(D)$ and either no edge of~$D$ crosses from~$A$ to~$B$ or no edge of~$D$ crosses from $B$ to $A$. We will drop the `directed' and refer to $\{A,B\}$ only as `separation'.

    $\{A,B\}$ is \defn{proper} if neither $A$ nor $B$ equals $V(D)$.
    The \defn{order} $|\{A, B\}|$ of $\{A, B\}$ is defined as the size $|A \cap B|$ of its \defn{separator} $A \cap B$. 
    The \defn{orientations} of $\{A,B\}$ are the \defn{oriented separations} $(A,B)$ and $(B,A)$.
\end{definition}

Next, we define \emph{butterfly minors}, which are a key notion of minors for digraphs, and which play a similar role in structural digraph theory as do ordinary graph minors in structural graph theory. This notion is for example used in the \emph{Directed Grid Theorem}~\cite{KKDirectedGridThm} due to Kawarabayashi and Kreutzer and in the \emph{Directed Flat Wall Theorem} due to Giannopoulou et al.~\cite{DirectedFlatWall}, both of which will be stated later. 

\begin{definition}[Butterfly minor]
Let $D$ be a digraph. An edge $e=(u,v) \in E(D)$ is called \defn{contractible} if $d_D^+(u)=1$ or $d_D^-(v)=1$, i.e. if $e$ is the only edge leaving $u$ or the only edge entering $v$. If $e$ is contractible, \defn{contracting~$e$} means transforming $D$ into a new digraph obtained by identifying $u$ and $v$ into a single vertex, and ignoring loops and multiple edges in the same direction created by this process. 

A digraph $D'$ is called a \defn{butterfly minor} of another digraph $D$ if $D'$ is isomorphic to a digraph which can be obtained from $D$ via a finite sequence of edge-deletions, vertex-deletions, and contractions of contractible edges (in arbitrary order).
\end{definition}

It is well-known and not difficult to prove that if $F$ is a digraph of maximum (total) degree at most~$3$ such that $\Delta^+(F), \Delta^-(F) \le 2$, then $F$ is a butterfly minor of~$D$ if and only if $D$ contains a \defn{subdivision} of~$F$, i.e.\ a subdigraph that is (up to isomorphism) obtained from $F$ by replacing its edges by a collection of internally vertex-disjoint directed paths with start- and endpoints corresponding to those of the original edges.
\smallskip

Next, we recall the definition of cylindrical walls, which form the directed analogue of walls in undirected graph structure theory. 

\begin{definition}[(Elementary) cylindrical wall, cf.~\cite{DirectedFlatWall}]\label{def:wall}
The \defn{elementary cylindrical wall of order~$k$} for $k \in \mathbb{N}$ is the planar digraph~\defn{$W_k$} with vertex set $V(W_k)=[2k]\times [2k]$ and in which there is an edge from a vertex $(x,y)$ to another vertex $(x',y')$ if and only if one of the following holds (see~\cref{fig:wall} for an illustration):
\begin{itemize}
    \item $y=y'$ is odd and $x'=x+1$.
    \item $y=y'$ is even and $x'=x-1$. 
    \item $x=x'$ is odd, $y$ is even and $y'\equiv y+1 \text{ (mod }2k)$.
    \item $x=x'$ is even, $y$ is odd and $y'\equiv y+1 \text{ (mod }2k)$.
\end{itemize}

The \defn{rows} of an elementary cylindrical wall~$W$ (of order~$k$) are the induced directed paths $R_n$, for $n \in [2k]$ on vertex set $\{(x,n) : x \in [2k]\}$, and its \defn{columns} are the induced directed cycles $C_n$, for $n \in [k]$, on vertex set $\{(2n-1, y), (2n, y) : y \in [2k]\}$ (see \cref{fig:wall}).

A \defn{cylindrical wall of order $k$} is any digraph isomorphic to a subdivision of $W_k$, i.e.\ a digraph which can be obtained from $W_k$ by replacing its edges by internally disjoint directed paths with the same start- and endpoints. Its rows and columns are the subdivided rows and columns of~$W_k$.

The \defn{perimeter} \defn{$\mathrm{per}(W)$} of a cylindrical wall $W$ is defined to be the set of vertices contained in its first or last column. The \defn{interior} is its complement, i.e. $\defnm{\mathrm{int}(W)} := V(W)\setminus \mathrm{per}(W)$. A \defn{brick} of $W$ is any face of its natural planar or cylindrical embedding which differs from the two faces bounded by $\mathrm{per}(W)$.
\end{definition}

\begin{figure}[ht]
	\centering
    	\begin{tikzpicture}[scale=0.5, decoration={
			markings,
			mark=at position 0.365 with {\arrowreversed{Straight Barb[length=1.8mm,width=1.8mm]}}}]
		\tikzstyle{w}=[circle,draw,fill=black!50,inner sep=0pt,minimum width=3pt]

		\foreach \x in {0, 4, 8}{
			\foreach \y in {1.5,4.5,7.5,10.5}{	
				\draw[postaction={decorate}] (\x+2, \y)-- (\x+2, \y-1.5);	
				
				\draw[postaction={decorate}] (\x, \y)-- (\x+2, \y);	
				\draw[postaction={decorate}] (\x+2, \y)-- (\x+4, \y);	
				
				\draw[postaction={decorate}] (\x+2, \y-1.5)-- (\x, \y-1.5);	
				\draw[postaction={decorate}] (\x+4, \y-1.5)-- (\x+2, \y-1.5);	
				
			}
		}
        \foreach \x in {0, 4, 8}{
			\foreach \y in {1.5,4.5,7.5}{
				\draw[postaction={decorate}] (\x, \y+1.5)-- (\x, \y);		
				
			}
		}
        
		\foreach \x in {0, 12}{
			\foreach \y in {1.5,4.5,7.5,10.5}{	
				\draw[very thick, postaction={decorate}] (\x+2, \y)-- (\x+2, \y-1.5);	
				
				\draw[very thick, postaction={decorate}] (\x, \y)-- (\x+2, \y);	
				
				\draw[very thick, postaction={decorate}] (\x+2, \y-1.5)-- (\x, \y-1.5);	
			}
		}
        \foreach \x in {0, 12}{
			\foreach \y in {1.5,4.5,7.5}{
				
				\draw[very thick, postaction={decorate}] (\x, \y+1.5)-- (\x, \y);		
			}
		}
	
		\foreach \i in {0,2,4,6,8,10,12,14}
		{
			\foreach \j in {0,1.5,3,4.5,6,7.5,9,10.5}
			{
				\draw[fill,black] (\i,\j) circle (.12);
			}
		}

        \foreach \x in {2,4,6,8,10}{
            \draw[fill,lightishGreen] (\x,4.5)  circle (.12);
            \draw[lightishGreen,thick,postaction={decorate}] (\x, 4.5)-- (\x+2,4.5);	
        }
        \foreach \x in {0,12}{
            \draw[lightishGreen,very thick,postaction={decorate}] (\x, 4.5)-- (\x+2,4.5);	
        }
        \foreach \x in {4,6}{
            \foreach \y in {0,1.5,3,4.5,6,7.5,9,10.5}{
            \draw[lightishGreen,fill] (\x,\y) circle (0.12);
            }
        }
        \foreach \x in {0,12,14}{
            \draw[fill,lightishGreen] (\x,4.5)  circle (.12);
        }

        \foreach \y in {0,3,6,9}{
            \draw[lightishGreen,thick,postaction={decorate}] (6,\y)-- (4,\y);	
            \draw[lightishGreen,thick,postaction={decorate}] (6,\y+1.5) -- (6,\y);
            \draw[lightishGreen,thick,postaction={decorate}] (4, \y+1.5) -- (6,\y+1.5);
        }
        \foreach \y in {3,6,9}{
            \draw[lightishGreen,thick,postaction={decorate}] (4,\y) -- (4,\y-1.5);
        }

\begin{scope}[decoration={
        markings, reset marks,
        mark=at position 0.5 with {\arrowreversed{Straight Barb[length=1.8mm,width=1.8mm]}}}]

\draw [very thick,postaction={decorate}] plot [smooth, tension=0.45] coordinates {(12,0) (12.5,-1) (14.5,-0.6) (14.9,5.5) (14.5,10.7) (12.3,11.1) (12,10.5)};
\end{scope}

\begin{scope}[decoration={
        markings, reset marks,
        mark=at position 0.502 with {\arrowreversed{Straight Barb[length=1.8mm,width=1.8mm]}}}]

\draw [postaction={decorate}] plot [smooth, tension=0.4] coordinates {(8,0) (8.8,-1.3) (15.1,-1.1) (15.9,5.5) (15.1,11.2) (8.7,11.4) (8,10.5)};
\end{scope}

\begin{scope}[decoration={
        markings, reset marks,
        mark=at position 0.499 with {\arrowreversed{Straight Barb[length=1.8mm,width=1.8mm]}}}]
\draw [lightishGreen,thick,postaction={decorate}] plot [smooth, tension=0.4] coordinates {(4,0) (5.3,-1.6) (15.4,-1.5) (16.6,5.5) (15.4,11.6) (5.2,11.8) (4,10.5)};
\end{scope}

\begin{scope}[decoration={
        markings, reset marks,
        mark=at position 0.498 with {\arrowreversed{Straight Barb[length=1.8mm,width=1.8mm]}}}]
\draw [very thick,postaction={decorate}] plot [smooth, tension=0.4] coordinates {(0,0) (1.6,-2) (15.7,-1.9) (17.4,5.5) (15.7,12.1) (1.6,12.2) (0,10.5)};
\end{scope}

\draw (-1,0) node {$R_{1}$};
\draw (-1,1.5) node {$R_{2}$};
\draw (-1,3) node {$R_{3}$};
\draw[lightishGreen] (-1,4.5) node {$R_{4}$};
\draw (-1,6) node {$R_{5}$};
\draw (-1,7.5) node {$R_{6}$};
\draw (-1,9) node {$R_{7}$};
\draw (-1,10.5) node {$R_{8}$};

\draw (1,-0.7) node {$C_{1}$};
\draw[lightishGreen] (5,-0.7) node {$C_{2}$};
\draw (9,-0.7) node {$C_{3}$};
\draw (13.2,-0.7) node {$C_{4}$};

\end{tikzpicture}
	\caption{The elementary cylindrical wall of order~$4$. The row $R_4$ and the column $C_2$ are highlighted in green, while the two columns whose vertices form the perimeter are marked with thick lines.}\label{fig:wall}
\end{figure}

The relevance of cylindrical walls to digraph structure theory is that they form canonical digraphs with large `directed tree-width'. The \emph{directed tree-width} $\mathrm{dtw}(D)$ of a digraph $D$ is a natural number which intuitively measures how `structurally far' $D$ is from an acyclic digraph (the latter correspond exactly to the digraphs $D$ such that $\mathrm{dtw}(D)=0$). It forms a generalisation of the well-known \emph{undirected} tree-width to digraphs. Since it will not be relevant for what follows, we omit a precise definition of directed tree-width, and instead refer the interested reader to the introductory paper of Johnson, Robertson, Seymour and Thomas~\cite{directedtreewidth}. As previously mentioned, cylindrical walls have large directed tree-width (growing to infinity with their order). Conversely, Kawarabayashi and Kreutzer~\cite{KKDirectedGridThm} proved the \emph{Directed Grid Theorem}, stating that a qualitative converse is true as well:

\begin{theorem}[Directed Grid Theorem~\cites{KKDirectedGridThm}{HKMM24}] \label{thm:directedgrid}
There exists a function $d:\mathbb{N}\rightarrow \mathbb{N}$ such that every digraph~$D$ with directed tree-width at least $d(k)$ contains a cylindrical wall of order~$k$ as a subdigraph.
\end{theorem}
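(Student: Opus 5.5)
The Directed Grid Theorem is a deep result cited from the literature (Kawarabayashi--Kreutzer, with the improved bounds of Hatzel et al.), so a proof from scratch is not reasonable here. The plan below follows the architecture of the known proof, in the order one would carry it out.

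\emph{Step 1: from large directed tree-width to a well-linked set.} If $\mathrm{dtw}(D)\ge d(k)$, then $D$ contains a large well-linked set $X$. By the duality between directed tree-width and havens/brambles of Johnson, Robertson, Seymour and Thomas, we obtain a haven of large order. From it we extract a set $X$ of size roughly $d(k)/2$ such that for all equal-size subsets $A,B\subseteq X$ there are $|A|$ vertex-disjoint directed paths from $A$ to $B$.

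\emph{Step 2: from the well-linked set to a path system.} Using well-linkedness repeatedly, we build many disjoint directed paths $P_1,\dots,P_m$ such that between any two of them there are many disjoint linkages in both directions. This is a ``web'' or ``path system''. The directed setting forces linkages in both directions to be handled separately, and Ramsey-type cleaning reduces everything to a bounded number of homogeneous configurations.

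\emph{Step 3: untangling into a cylindrical grid (main obstacle).} We exploit the linkages between consecutive paths to find $2k$ concentric disjoint directed cycles and $2k$ transversal paths that meet them in the prescribed order. The difficulty is that linkage paths may cross the $P_i$ in arbitrary orders. In the undirected case one can reroute freely; in digraphs one cannot. The approach is to first obtain a ``fence''-like or acyclic-grid structure via Erd\H{o}s--Szekeres-type order arguments on the intersection sequences, and then close it into cycles using back-linkages supplied by well-linkedness. If the structure degenerates, one instead finds a different large certificate (such as a large bidirected structure), which also yields the cylindrical grid.

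\emph{Step 4: passing to the wall.} Finally, a cylindrical grid contains a cylindrical wall of order $k$ as a subdigraph, obtained by deleting alternate edges and subdividing as in \cref{def:wall}. The function $d$ is the composition of all the bounds above. I expect Step 3 to be where essentially all the work lies.
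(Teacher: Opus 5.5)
Like the paper, your proposal ultimately rests on citing the literature. The paper gives no proof of this theorem: it quotes Kawarabayashi--Kreutzer (with the bounds of Hatzel et al.). It adds only the remark that their conclusion, a large cylindrical \emph{grid} as a \emph{butterfly minor}, is qualitatively equivalent to the stated conclusion, a cylindrical \emph{wall} as a \emph{subdigraph}. Your Steps 1--3 are a fair outline of the Kawarabayashi--Kreutzer architecture: well-linked set, path system, web, fence, then closing up with a back-linkage. But they describe the goal rather than argue it. Step 3 contains none of the actual rerouting. The claim that a ``degenerate'' configuration gives ``a large bidirected structure'' which ``also yields the cylindrical grid'' is unsupported. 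So these steps can only serve as a pointer to the cited proof, not as part of a proof.

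The step that genuinely has to be supplied is the translation in Step 4, and as written it is incomplete. The cited theorem yields the cylindrical grid as a butterfly minor of $D$, not as a subdigraph. So deleting alternate edges of the grid only shows that $W_k$ is a butterfly minor of $D$. That observation itself is fine: let concentric cycles $2m-1$ and $2m$ jointly form the $m$-th column, zig-zagging between them along the radial paths, and take the radial paths as rows; this gives a subdivision of $W_k$ inside the cylindrical grid of order $2k$.

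To finish, you need the fact recorded in the paper right after the definition of butterfly minors. A digraph $F$ of maximum degree at most $3$ with $\Delta^+(F),\Delta^-(F)\le 2$ is a butterfly minor of $D$ if and only if $D$ contains a subdivision of $F$. Since $W_k$ satisfies these degree conditions, $D$ contains a subdivision of $W_k$, which by \cref{def:wall} is precisely a cylindrical wall of order $k$.

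If you instead intend Step 3 to produce the concentric cycles and transversal paths as actual subdigraphs of $D$, note that they will in general meet in subpaths rather than single vertices. In that case the wall still has to be extracted from that configuration, not obtained by simply deleting edges of a grid.
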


\noindent This is a generalisation of the classic \emph{Grid Theorem} of Robertson and Seymour~\cite{MR854606}.

We remark that the original theorem of Kawarabayashi and Kreutzer~\cite{KKDirectedGridThm} deduced the existence of a cylindrical \emph{grid} as a \emph{butterfly minor} instead of a cylindrical wall as a subgraph. However, the qualitative equivalence between these two objects is well-known (see also \cite[Theorem~5.12]{HKMMDirectedGridThm2}, where the precise statement of \cref{thm:directedgrid} in terms of walls and subgraphs can be found).

Before moving on to a discussion of the Directed Flat Wall Theorem, we record the following simple observation about the interaction between the rows of a cylindrical wall and its cylindrical subwalls for later purposes.

\begin{observation}\label{obs:wallintersection}
    Let $W$ and $W'$ be cylindrical walls such that $W'\subseteq W$. Let $X$ be the set of vertices of a row of $W$. Then $V(W')\cap X\neq\emptyset$.
\end{observation}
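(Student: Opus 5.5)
The plan is to show the stronger statement that \emph{every} directed cycle of $W$ meets every row of $W$. Since $W'$ is a cylindrical wall, it contains a directed cycle (for instance any of its columns), and as $W'\subseteq W$ this is also a directed cycle of $W$. The observation then follows immediately.

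To prove the stronger statement, view $W$ as a subdivision of the elementary wall $W_k$. Every directed cycle $C$ of $W$ corresponds to a directed cycle $\hat C$ of $W_k$: contract each subdivided edge back to the original edge, and note that $C$ must pass through branch vertices because subdivision paths are internally disjoint and a single path is acyclic. Assign to each vertex $(x,y)$ of $W_k$ its \emph{level} $y\in\mathbb{Z}/2k\mathbb{Z}$. By \cref{def:wall}, horizontal edges preserve the level, and vertical edges go from level $y$ to level $y+1 \pmod{2k}$. No edge decreases the level.

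Each row $R_y$ is a directed path, so the subgraph induced by a single level is acyclic. Hence $\hat C$ uses at least one vertical edge. Now track the level along $\hat C$ as an integer that increases by $1$ at each vertical edge and is unchanged at horizontal edges. Since $\hat C$ returns to its starting vertex, the total increase is a positive multiple of $2k$. Consequently, $\hat C$ visits a branch vertex at every level $y\in[2k]$.

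A branch vertex at level $y$ lies on the row $R_y$ of $W$, because rows of $W$ are the subdivided rows of $W_k$ and contain all branch vertices of that level. Therefore $V(C)\cap V(R_y)\neq\emptyset$ for every $y$. This applies in particular to the row with vertex set $X$.

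The only step requiring slight care is the passage between $W$ and $W_k$: a cycle in the subdivision corresponds to a cycle of $W_k$, and subdivision vertices on vertical edges are not row vertices. However, the cycle still hits branch vertices at every level, and these do lie on the rows. I expect no real obstacle beyond this bookkeeping.
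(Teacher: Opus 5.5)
Your proposal is correct and is the paper's proof: $W'$ contains a directed cycle, this is also a directed cycle of $W$, and every directed cycle of $W$ meets every row. The paper simply asserts that last fact as following from the definition of a cylindrical wall, whereas your level-counting argument (vertical edges raise the level by one modulo $2k$, horizontal edges preserve it, and each level induces an acyclic row) gives an explicit justification for it.
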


\begin{proof}
    Observe that $W'$ contains a directed cycle, which is thus also a directed cycle in $W$. Furthermore, note that by definition of a cylindrical wall, every directed cycle in $W$ must meet every row in some vertex. Hence, in particular the directed cycle in $W'$ must intersect $X$, and thus $V(W')\cap X\neq\emptyset$.  
\end{proof} 

The \emph{Directed Flat Wall Theorem} due to Giannopoulou et al.~\cite{DirectedFlatWall} is a qualitative strengthening of the Directed Grid Theorem. It says that in every digraph $D$ of sufficiently large directed tree-width, we can either find a large complete digraph~$\bivec{K}_t$ as a butterfly minor, or we can find a so-called \emph{flat wall} in~$D$, i.e.\ a cylindrical wall~$W$ such that the rest of the digraph interacts in a rather controlled way with~$W$. 

\begin{definition}[Weak flatness,~cf.~\cite{rapha}]
Let $D$ be a digraph, and let $W\subseteq D$ be a cylindrical wall. We say that $W$ is \defn{weakly flat} in~$D$ if for every directed path $P$ in $D$ with endpoints $x,y$ such that $V(P)\cap V(W)=\{x,y\}$ and such that at least one of $x,y$ lies in $\mathrm{int}(W)$, we have that there exists a brick~$B$ of~$W$ with $x,y\in V(B)$.
\end{definition}

In the next section, we will explicitly state and justify a (in some sense weaker, in some sense stronger) version of the Directed Flat Wall Theorem from~\cite{DirectedFlatWall} that uses the above definition.

\section{Proof sketch and auxiliary results} \label{sec:aux}

In this section, we collect several important auxiliary results that will be crucial ingredients in the proof of our main result,~\cref{main:CyclesDistinctLengths}. First, we discuss the aforementioned variant of the Directed Flat Wall Theorem. Then, we give a high-level overview of our proof of~\cref{main:CyclesDistinctLengths}, compare it to an approach previously developed by the second author in~\cite{rapha} and, along the way, state several auxiliary results from that work which will be re-used in our proof of~\cref{main:CyclesDistinctLengths}.

\subsection{A version of the Directed Flat Wall Theorem}
We start by formulating and justifying a variant of the original statement of the Directed Flat Wall Theorem proved by Giannopoulou et al.~\cite{DirectedFlatWall}. As previously mentioned, the original statement of the theorem differs from the one presented here in two ways: (1)~The notion of flatness of a wall used in~\cite{DirectedFlatWall} is strictly stronger than the `weakly flat'-notion we use in the following statement, but at the same time (2)~the original theorem starts from the assumption of high directed tree-width and then guarantees a large complete butterfly minor or \emph{some} flat cylindrical wall. Here, we instead need a `local version', which assumes a large cylindrical wall is given to us to start with, and then guarantees the found flat cylindrical wall to be a \emph{subwall} of that initially fixed, larger, wall. While this second property does not explicitly follow from the standard formulations of the Directed Flat Wall Theorem (\cite[Theorem~2.3]{DirectedFlatWall}), as we shall explain below, the `local version' needed here follows directly from the \emph{proof} of a key lemma (Lemma~4.2) in \cite{DirectedFlatWall}.

\begin{theorem}\label{thm:flatwall}
There exist functions $\rho_{\ref{thm:flatwall}}\colon \mathbb{N}\times\mathbb{N}\to\mathbb{N}$ and $\alpha_{\ref{thm:flatwall}}\colon\N\to\N$ such that, for all $r,t\geq 1$, every digraph $D$, and every cylindrical wall
$W\subseteq D$ of order at least $\rho_{\ref{thm:flatwall}}(r,t)$, one of the following holds:
\begin{enumerate}
  \item\label{itm:FlatWall:CliqueMinor} $D$ contains $\bivec{K}_t$ as a butterfly minor; or
  \item\label{itm:FlatWall:Wall} There exists some $A\subseteq V(D)$ with $|A|\leq\alpha_{\ref{thm:flatwall}}(t)$ and a cylindrical wall $W'\subseteq W-A$
        of order $r$ such that $W'$ is weakly flat in $D-A$.
\end{enumerate}
\end{theorem}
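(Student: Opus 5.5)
The plan is to derive \cref{thm:flatwall} from the machinery in the proof of the Directed Flat Wall Theorem of Giannopoulou et al.~\cite{DirectedFlatWall}, specifically their Lemma~4.2. That lemma does not start from high directed tree-width. It starts from a large cylindrical wall $W$ in $D$ and produces one of two outcomes: a $\bivec{K}_t$ butterfly minor, or a set $A$ of at most $\alpha(t)$ vertices together with a large subwall of $W$ that is flat (in their strong sense) in $D-A$. The tree-width hypothesis of their main theorem is used only to obtain $W$ in the first place, via \cref{thm:directedgrid}. So I will set $\rho_{\ref{thm:flatwall}}(r,t)$ to be the wall order their lemma needs to output a flat wall of order $r$, and $\alpha_{\ref{thm:flatwall}}(t)$ to be their apex bound, which depends only on $t$.

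The argument has three steps, carried out in order.

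\textbf{Step 1 (locality).} Trace the proof of Lemma~4.2 and check that the flat wall it outputs really is a subwall of the given $W$ after deleting $A$. I expect this to hold because the proof works by repeatedly passing to subwalls of $W$ avoiding certain bricks or rows. Whenever it finds a ``jump'' or a ``cross'' it either:
\begin{itemize}
\item accumulates many disjoint such paths, using them together with $W$ to route a $\bivec{K}_t$ butterfly minor; or
\item bounds them by a small hitting set, which goes into $A$, and then restricts to a subwall of $W$ far from the obstruction.
\end{itemize}
So at every stage the current wall stays a subdigraph of $W$, and in the end $W'\subseteq W-A$.

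\textbf{Step 2 (order).} If the lemma's output has order larger than $r$, pass to a subwall of order exactly $r$. I need to check that flatness is inherited by this subwall. Rather than checking this for their strong notion, I will argue it directly for weak flatness, as follows.

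\textbf{Step 3 (flat implies weakly flat).} Verify that their notion of flatness implies weak flatness. Take a directed path $P$ in $D-A$ with ends $x,y$ on $W'$, interior disjoint from $W'$, and at least one end in $\mathrm{int}(W')$. Their flatness gives a separation $(X,Y)$ of $D-A$ in which:
\begin{itemize}
\item the part containing $\mathrm{int}(W')$ has a planar-type decomposition into cells attached to bricks;
\item every path leaving and re-entering the wall through the interior stays within a single cell;
\item each cell's boundary vertices lie on a common brick.
\end{itemize}
It follows that $x$ and $y$ lie on one brick of $W'$. Passing to a subwall of order $r$ preserves this property: every brick of the larger wall that meets $\mathrm{int}$ of the subwall is contained in a brick of the subwall. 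Hence weak flatness survives the restriction in Step 2.

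The main obstacle is Step 1. Their lemma is stated with a wall obtained from their grid and may re-route rows and columns via linkages, producing a new wall that is not literally a subgraph of $W$. If rerouting does occur, I would first try to show that each rerouting can be replaced by a choice of subwall plus an enlargement of $A$. Failing that, I would retreat to the weaker claim that $W'$ is built from $W$ and a bounded number of short detours. Even then, the vertices of $W'$ that lie off $W$ can be avoided by shrinking to a subwall, at the cost of increasing $\rho_{\ref{thm:flatwall}}$.
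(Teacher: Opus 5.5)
Your plan matches the paper's argument. The paper also reruns the proof of \cite[Lemma~4.2]{DirectedFlatWall} with the given wall $W$ in place of the wall that proof obtains from the Directed Grid Theorem. It observes that every step either yields a $\bivec{K}_t$ butterfly minor or passes to a subwall of $W$ after deleting a vertex set of size bounded in $t$, so the rerouting you worry about in your last paragraph does not occur.

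A few corrections to your framing, none affecting the main line:
\begin{itemize}
\item Per the paper, Lemma~4.2 itself assumes large directed tree-width; the wall is produced inside its proof.
\item That proof directly yields the weak-flatness (jump) property, not the stronger flatness notion. So your Step~3 is unnecessary.
\item Step~2 is also likely unnecessary, since the parameters can be chosen to output order $r$ directly. As written, your brick-containment argument would not handle jump paths that run through vertices of the larger wall lying outside the chosen subwall.
\end{itemize}
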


As indicated before, the statement of \cref{thm:flatwall} differs from that of \cite[Lemma~4.2]{DirectedFlatWall} in the following way: \cite[Lemma~4.2]{DirectedFlatWall} starts with a digraph~$D$ of large directed tree-width, and then finds a set $A \subseteq V(D)$ and \emph{some} cylindrical wall~$W'$ in~$D$ such that $W'$ is weakly flat in $D-A$ (if $D$ does not contain $\bivec{K}_t$ as a butterfly minor), while \cref{thm:flatwall} starts with a \emph{given} cylindrical wall~$W$ in~$D$, and asserts that the cylindrical wall~$W'$ can be taken as a cylindrical \emph{subwall} of~$W$. Closely related, but not equivalent statements to the one used here that also start with an arbitrarily given, sufficiently large wall, have appeared explicitly in the literature, see for example~\cite[Theorem~1.8]{MR4764852} and~\cite[Theorem~6.3]{hatzelrecent}.

While not explicitly stated there,~\cref{thm:flatwall} follows immediately from the proof of \cite[Lemma~4.2]{DirectedFlatWall}. Indeed, in the proof of \cite[Lemma~4.2]{DirectedFlatWall}, they first apply the Directed Grid Theorem~\cite{KKDirectedGridThm} to~$D$, which yields \emph{some} (large) cylindrical wall~$W$ in $D$ (as $D$ is assumed to have large directed tree-width). They then construct the cylindrical wall~$W'$ in several steps, where in each step, they either find a suitable collection of `jumps', which can be used to construct a $\bivec{K}_t$ butterfly minor and thus yields outcome \ref{itm:FlatWall:CliqueMinor}, or they find a cylindrical \emph{subwall} $\widetilde{W}$ of~$W$ which after removing a bounded set of vertices (in terms of $t$) has fewer `jumps', and they then proceed with their argument in $\widetilde{W}$. Thus, if they never encounter a $\bivec{K}_t$ butterfly minor, they eventually end up with a cylindrical \emph{subwall}~$W'$ of~$W$ and a set of vertices $A\subseteq V(D)$ of size bounded in terms of $t$, such that every `jump' in $D-A$ across $W'$ must have endpoints in a common brick. Concretely, they guarantee that every directed path in $D-A$ with both endpoints in $W'$ and all internal vertices outside $W'$ such that at least one endpoint is in the interior of $W'$, has both of its endpoints in a common brick of $W'$. Hence, $W'$ is weakly flat in $D-A$.

Therefore, if we run the proof of \cite[Lemma~4.2]{DirectedFlatWall} on input $W$ (where $W$ is the cylindrical wall given by the premises of \cref{thm:flatwall}), then it will return either a $\bivec{K}_t$ butterfly minor in~$D$ (as in~\ref{itm:FlatWall:CliqueMinor}) or a cylindrical \emph{subwall}~$W'$ of~$W$ satisfying~\ref{itm:FlatWall:Wall}.

\subsection{Proof sketch} \label{subsec:ProofSketch}

We now give a high-level sketch of the proof of~\cref{main:CyclesDistinctLengths}, comparing it to the proof strategy of the second author from~\cite{rapha}, highlighting the novelties, and explaining the relevant auxiliary results from that paper on the way. Let $k \in \N$, and let $D$ be some digraph of sufficiently large minimum out-degree $d\gg k$. Our goal is to show that $D$ contains $k$ vertex-disjoint directed cycles of pairwise distinct lengths.

By a result of the second author~\cite[Proposition~6]{rapha}, we may assume that $D$ has large directed tree-width. Therefore, by the Directed Grid Theorem~\cite{KKDirectedGridThm} (see~\cref{thm:directedgrid}), there exists a cylindrical wall~$W$ in~$D$ of large order. Note that since cylindrical walls contain a variety of directed cycles and many disjoint directed cycles, it is tempting to hope that $W$ already will provide us with many vertex-disjoint directed cycles of distinct lengths. However, as shown in~\cite[Remark~3.1]{rapha} and~\cite{gollin}, there exist cylindrical walls of arbitrarily large order in which all directed cycles have the same length. This is why the approach followed in~\cite{rapha} as well as the one we follow here needs to use more information about the digraph $D$ than just the containment of a large-order cylindrical wall. Concretely, this is done through analyzing the interaction of $W$ with the rest of the digraph, and the Directed Flat Wall Theorem turns out to be a useful tool to control this interaction.

By the Directed Flat Wall Theorem~\cite{DirectedFlatWall} (see \cref{thm:flatwall}), either $D$ contains $\bivec{K}_t$, for some $t \gg k$, as a butterfly minor, or there is a small set $A \subseteq V(D)$ and a cylindrical wall $W' \subseteq W$ such that $W'$ is weakly flat in $D-A$. In the first case, we are done by the following result of the second author.

\begin{lemma}[{\cite[Corollary~11]{rapha}}]\label{lem:butterfly}
    Let $k\in \mathbb{N}$, and let $D$ be a digraph containing $\bivec{K}_t$ as a butterfly minor, where $t\ge \frac{k^2+3k}{2}$. Then $D$ contains $k$ vertex-disjoint directed cycles of pairwise distinct lengths.
\end{lemma}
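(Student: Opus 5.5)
The plan is to realise the $k$ cycles as images of a suitable family of vertex-disjoint cycles in $\bivec{K}_t$, and to transfer them to $D$ through the butterfly model. Set $s=\frac{k^2+3k}{2}=\sum_{i=1}^{k}(i+1)$. Since $t\ge s$, we can partition $s$ of the vertices of $\bivec{K}_t$ into sets $S_1,\dots,S_k$ with $|S_i|=i+1$. On each $S_i$ we take a directed Hamiltonian cycle $Z_i$, which has length $i+1\ge 2$; here we use that $\bivec{K}_t$ contains anti-parallel pairs, so $Z_1$ is a $2$-cycle. This gives $k$ disjoint cycles in $\bivec{K}_t$. Their lengths are distinct, but contracting edges can stretch lengths arbitrarily, so distinctness of lengths is \emph{not} inherited by $D$. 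The real task is therefore to control lengths after lifting.

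The fix is to keep flexibility inside each part. We work with butterfly models: there is a model consisting of disjoint subgraphs $B_v\subseteq D$ ($v\in V(\bivec{K}_t)$), each an in-tree glued to an out-tree at a root (the standard characterization of butterfly minors), and for every edge $(u,v)$ of $\bivec{K}_t$ an edge of $D$ from the out-tree of $B_u$ to the in-tree of $B_v$. Then every cycle $Z$ in $\bivec{K}_t$ lifts to a directed cycle in $D$ contained in $\bigcup_{v\in Z}B_v$. Rather than fixing the $Z_i$ in advance, we choose the cycles greedily. Write $\ell(Z)$ for the length of the lift of $Z$. Order the construction so that at step $i$ we pick, among the still unused branch sets, a cycle whose lifted length differs from the $i-1$ lengths already used. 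To guarantee that this is possible, we exhibit $i$ candidate cycles on disjoint fresh vertex sets whose lifts have pairwise distinct lengths; at most $i-1$ of them are blocked, so one survives.

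The main obstacle is producing $i$ candidates with pairwise distinct lifted lengths. I would try the following. Take the $i+1$ unused vertices $v_0,\dots,v_i$ and consider the cycles $v_0v_1\cdots v_jv_0$ for $j=1,\dots,i$, all sharing $v_0$. They are not disjoint, but since only one of them is chosen, disjointness among the candidates is unnecessary. Their lifts should have strictly increasing lengths, because the lift of the cycle on $v_0\cdots v_{j+1}$ passes through the additional branch set $B_{v_{j+1}}$. One must check that the paths through the branch sets can be chosen consistently, i.e.\ that the path inside $B_{v_j}$ from the entry point of edge $(v_{j-1},v_j)$ to the exit towards $v_{j+1}$ versus towards $v_0$ does not shorten the cycle. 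To avoid this subtlety, I would compare lengths via vertex sets instead: choose the lifts as minimal cycles in $D[\bigcup_{m\le j}B_{v_m}]$ that meet every $B_{v_m}$. The budget then works out, since step $i$ uses $i+1$ fresh vertices and the total is $\sum_{i=1}^{k}(i+1)=s\le t$. If monotonicity fails, the fallback is to use all $\binom{i+1}{2}$-type sub-cycles and a pigeonhole argument on lengths.
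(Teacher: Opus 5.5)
Your reduction is sound. Split $\frac{k^2+3k}{2}$ branch sets into groups of sizes $2,3,\dots,k+1$ and choose greedily; then it suffices to show that a butterfly model of $\bivec{K}_{i+1}$ contains $i$ (not necessarily disjoint) directed cycles of pairwise distinct lengths. This is the same mechanism as in \cref{claim:nodisjointhighdegrees}. The paper itself does not prove the lemma; it imports it from \cite{rapha}. The gap lies exactly in that key step, which you never establish.

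Your claim that the lifts of $v_0v_1\cdots v_jv_0$ have strictly increasing lengths is false for an arbitrary labelling. Passing from $j$ to $j+1$ does not merely insert $B_{v_{j+1}}$. It also replaces the return route from $B_{v_j}$ to $B_{v_0}$ by the return route from $B_{v_{j+1}}$. The old route consists of the out-tree path in $B_{v_j}$ from its root to the tail of the edge towards $v_0$, that edge, and the in-tree path in $B_{v_0}$ to its root, and it can be arbitrarily longer than the new one. For instance, with $i=2$ the two lifted lengths coincide as soon as the out-tree path in $B_{v_1}$ towards $v_0$ is long enough. Your proposed repair does not help either. Whatever `minimal' means, shortest (or inclusion-minimal) cycles in $D[\bigcup_{m\le j}B_{v_m}]$ meeting every $B_{v_m}$ have no monotonicity, since the extra branch set may provide a shortcut between the old ones. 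The `pigeonhole on sub-cycles' fallback is not an argument.

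The missing idea is to make the length bookkeeping additive and then choose the labelling. Write the model with in-tree $I_v$, out-tree $O_v$, common root $r_v$, and a connecting edge $e_{uv}$ from $O_u$ to $I_v$. The lift of a cycle $Z$ of $\bivec{K}_{i+1}$ is then a genuine directed cycle, because $I_v\cap O_v=\{r_v\}$. Its length is $\sum_{(u,v)\in E(Z)}w(u,v)$, where
$w(u,v):=\mathrm{dist}_{O_u}(r_u,\text{tail of }e_{uv})+1+\mathrm{dist}_{I_v}(\text{head of }e_{uv},r_v)\ge 1$.

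Now fix $v_0$ and label the remaining $i$ vertices so that $w(v_1,v_0)\le w(v_2,v_0)\le\dots\le w(v_i,v_0)$. This is allowed because $\bivec{K}_{i+1}$ is complete, so every ordering yields valid cycles. The lifted length $\ell_j$ of $v_0v_1\cdots v_jv_0$ then satisfies $\ell_{j+1}-\ell_j=w(v_j,v_{j+1})+w(v_{j+1},v_0)-w(v_j,v_0)\ge 1$. Hence $\ell_1<\ell_2<\dots<\ell_i$, which is exactly what your greedy step needs, and your argument goes through.
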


Therefore, we may assume the second case holds. Let $C$ be the strong component of $D-A$ containing~$W'$. If $C$ has no outgoing edge to $D-(A \cup V(C))$, then, since $A$ is small, $C$ has still large minimum out-degree (it only decreased by at most $|A|$). In this case, we are again done by applying the following result of the second author to $W=W'$ and $D=C$ (because $W'$ is weakly flat in $D-A \supseteq C$).  

\begin{lemma}[{\cite[Lemma~3.7]{rapha}}]\label{lem:walltrains}
    Let $k\in \mathbb{N}$, let $D$ be a strongly connected digraph, and let $W\subseteq D$ be a cylindrical wall of order $3k+2$ which is weakly flat in $D$. If $\delta^+(D)\ge 7k-5$, then $D$ contains $k$ vertex-disjoint directed cycles of pairwise distinct lengths.
\end{lemma}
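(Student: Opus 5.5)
Our plan is to build the $k$ cycles by induction on $k$, combining the large out-degree of $D$ with the planar structure that weak flatness forces around the wall. First we choose, inside the interior of $W$, $k$ vertex-disjoint cylindrical subwalls $W^1,\dots,W^k$. Each has order at least $3$ and lives in its own band of consecutive columns; all of them are still weakly flat in $D$. The key observation is that a vertex $v$ in the interior of $W$ has at most $3$ out-neighbours on $W$ (indeed at most $2$, in a subdivision), so all but a bounded number of its out-edges leave $W$. Since $\delta^+(D)\ge 7k-5$, at least $7k-8$ out-edges leave $v$ to vertices outside $W$. Because $D$ is strongly connected, each such out-neighbour $u$ has a directed path back to $W$; let $P$ be a shortest such path. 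Weak flatness then forces the first vertex of $W$ hit by $v\to u\to P$ to lie on a brick containing $v$. Such a path is a \emph{local jump}: a path whose interior avoids $W$ and whose endpoints lie on a common brick.

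Second, we analyse local jumps. If a brick $B$ with a jump $Q$ from $x$ to $y$ has the property that the cycle formed by $Q$ and the $y$-to-$x$ path along $B$ (where this exists) has length different from $|B|$, then locally we obtain two cycles of distinct lengths. More generally, we aim to find, in a fixed subwall, a bounded family of cycles of many distinct lengths: either cycles using a jump with varying detour lengths, or the observation that the jump structure is "rigid". Concretely, we aim for the following claim: in each band $W^i$, either there is a cycle whose length avoids any prescribed set of $k-1$ integers, or the out-degree counting fails. For the counting we use that each interior vertex sends at least $7k-8$ jumps into the bricks around it. These jumps are internally disjoint from $W$, but they may share interior vertices with each other. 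We therefore pick jumps greedily: by taking shortest return paths and using strong connectivity, we find, for a vertex $v$, many jumps starting with distinct first edges. These give cycles through $v$ within a single brick neighbourhood, and the lengths $|Q|+\mathrm{dist}_B(y,x)$ vary.

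Third, we run the greedy selection across the $k$ bands, in the spirit of the classical argument that out-degree $k$ forces $k$ distinct cycle lengths. Having chosen cycles $Z^1,\dots,Z^{i-1}$ in earlier bands, we pick in band $i$ a cycle whose length differs from theirs. Disjointness requires the jumps used in band $i$ to avoid the previous cycles, and this is where flatness is essential. A jump from a vertex deep inside band $i$ returns to a brick at that vertex. Its interior lies outside $W$, however, so it could still meet earlier jumps. To handle this we use the planarity-type separation implied by weak flatness: a jump path meeting a jump from another band would yield a path between far-apart bricks, contradicting weak flatness. Hence jumps from different bands, chosen from their interiors, are disjoint.

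The main obstacle is guaranteeing enough distinct lengths inside one band, since walls themselves may have all cycles of equal length. We would try to derive this from the $7k-8$ distinct out-neighbours of a single vertex $v$: each yields a shortest jump and hence a cycle. Minimality of the return paths then gives a partial order on their lengths, from which, as in the out-degree-$k$ distinct-lengths argument, at least $k$ different cycle lengths arise. The constants $3k+2$ and $7k-5$ provide the slack needed for the bands and the discarded edges.
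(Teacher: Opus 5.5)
The paper only imports this lemma from \cite{rapha}, so there is no in-paper proof to compare with, and I judge your plan on its own. Your outer frame is sound. You split the $3k$ interior columns into $k$ bands of three consecutive columns. Weak flatness then makes anything hanging off a middle-column vertex re-attach to $W$ only on the at most three bricks through that vertex, hence inside its band. Two such excursions from different bands that meet would combine into a forbidden jump, and a greedy choice finishes.

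The gap is at the heart of the lemma: showing that a single band contains $k$ cycles of pairwise distinct lengths. You use only the out-degree of the wall vertex $v$, and distinct first edges $vu_j$ with shortest return paths give no control over lengths. For instance, every $u_j$ may send an edge to the same wall vertex $y$. Then all jumps have length $2$, and closed by the same route back to $v$ they all give cycles of one length. No ``partial order from minimality'' separates lengths.

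The closing step is also wrong. A brick of a cylindrical wall is not a directed cycle: it is two internally disjoint directed paths from a common source to a common sink. So a $y$-to-$x$ path along $B$ often does not exist, and $|B|$ is not a cycle length; getting back to $v$ generally requires going around a column. Finally, ``at least $7k-8$ out-edges leave $W$'' is false. $W$ need not be induced, so $v$ may have arbitrarily many chords to vertices of the (subdivided, arbitrarily long) bricks through it, and possibly no out-neighbour outside $V(W)$ at all.

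The missing idea is to apply the degree condition at the \emph{end} of a maximal excursion, as in the proof of \cref{lem:trains}. Fix $v_i$ on the middle column of band $i$, and let $P_i$ be a longest path starting at $v_i$ whose other vertices avoid $V(W)$, with last vertex $z_i$. By maximality, every out-neighbour of $z_i$ outside $V(W)$ lies on $P_i$. By weak flatness, applied to $P_i$ extended by the edge $z_iw$, every out-neighbour $w\in V(W)\setminus\{v_i\}$ lies on one of the at most three bricks through $v_i$.

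If $z_i$ has $k$ out-neighbours $p$ on $P_i$, the cycles $P_i[p,z_i]+z_ip$ have distinct lengths. Otherwise at least $7k-5-(k-1)=6k-4>6(k-1)$ out-neighbours lie on the at most six sides of these bricks, so one side $S$ carries $k$ of them. Close each $P_i+z_iw$ by following, from $w$, the directed cycle formed by $S$ and the rest of the column through the brick's source and sink. Stop at $v_i$ if it lies on that cycle; otherwise stop at the source and continue along the other side to $v_i$. These routes stay inside the band, and their lengths are injective in the position of $w$, so you get $k$ distinct lengths.

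Disjointness of $P_i$ and $P_j$ for $i\neq j$ is exactly where strong connectivity is needed. From a common vertex one can reach $W$, which yields a vertex lying on a brick through $v_i$ and on a brick through $v_j$. That is impossible for vertices in different bands. With this approach, your unproved claim that the subwalls $W^i$ stay weakly flat also becomes unnecessary, since you can work with the bricks of $W$ directly.
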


Therefore, we may assume that some edge of $D$ has its startpoint in~$C$ and its endpoint in $D-(A \cup V(C))$.  Let $U$ be the set of all vertices of $D- A$ that can reach $C$ by a directed path in $D-A$ (i.e.\ for every $u \in U$ there exists a directed path from~$u$ to~$C$), and set $X := U \cup A$ and $Y := V(D) \setminus U$. Then $\{X,Y\}$ is a proper separation of $D$ with separator~$A$. Indeed, by definition there can be no edge crossing from $Y$ to $X$. Also, recall that there is some edge starting in $C\subseteq X$ and ending in $V(D)\setminus (A\cup V(C))$. Since $C$ is a strong component of $D-A$, the endpoint of this edge cannot reach $C$ via a directed path in $D-A$. Hence, it cannot lie in $U$ and thus must be contained in $Y$. Thus, this edge crosses from $X$ to $Y$, and in particular the separation is proper. Finally, note that $V(W') \subseteq X\setminus Y$ by definition. 
\smallskip

So to conclude the proof, it would suffice to find a large cylindrical wall~$W$ in~$D$ with the property that for every small-order separation $\{X,Y\}$ of $D$ with some edges crossing from $X$ to $Y$ its strict side $Y \setminus X$ contains a row of~$W$. Indeed, \cref{obs:wallintersection} then ensures that every cylindrical subwall~$W'$ of $W$ intersects $Y \setminus X$.
The above argument would therefore necessarily end before finding a subwall $W'$ and a separation~$\{X,Y\}$ such that $V(W') \subseteq X\setminus Y$ and would return the desired collection of disjoint directed cycles instead.
\smallskip

To find a cylindrical wall $W$ with this property we make use of `directed tangles' (see \cref{def:Tangle}). More concretely, we will define a certain tangle, which will then `guide' us to a `good' wall. 
To define this tangle successfully, we first show that we can solve the problem by induction if in $D$ we can find two vertex-disjoint subdigraphs $D_1, D_2$, both of large minimum out-degree (where by `large', we mean, say, $\delta^+(D)/4$). 
Concretely, suppose that we have proved the conjecture for parameter $k-1$ already, then supposing $\delta^+(D)$ is large enough, we can already find $k-1$ vertex-disjoint directed cycles of distinct lengths in $D_1$. It then suffices to find at least $k$ directed cycles with pairwise distinct lengths within $D_2$ (but not necessarily disjoint), since then one of these cycles in $D_2$ must have a length differing from the $k-1$ lengths of the disjoint cycles found in $D_1$. Adding this cycle then yields $k$ vertex-disjoint directed cycles of distinct lengths in $D$. The last step of this argument is formally handled by the following statement (assuming $\delta^+(D)/4\ge k$).

\begin{lemma}[{\cite[Observation~3.2]{rapha}}] \label{lem:trains}
    Let $k\in \mathbb{N}$. Every digraph $D$ with $\delta^+(D)\ge k$ contains $k$ directed cycles (not necessarily disjoint) with pairwise distinct lengths. 
\end{lemma}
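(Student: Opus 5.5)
The statement to prove is \cref{lem:trains}: every digraph with $\delta^+(D)\ge k$ has $k$ directed cycles, not necessarily disjoint, with pairwise distinct lengths. I would use the standard longest-path argument. First I reduce to a convenient subdigraph, then build the cycles from the out-neighbourhood of the last vertex of a longest path.

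Fix a longest directed path $P=v_0v_1\dots v_\ell$ in $D$. Since $d^+(v_\ell)\ge k$, and since $P$ is a longest path, every out-neighbour of $v_\ell$ must lie on $P$. Otherwise we could extend $P$, contradicting maximality. So $v_\ell$ has at least $k$ out-neighbours $v_{i_1},\dots,v_{i_k}$ on $P$, with distinct indices $i_1<i_2<\dots<i_k<\ell$.

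For each $j\in[k]$, I take the cycle $C_j := v_{i_j}P v_\ell v_{i_j}$, that is, the subpath of $P$ from $v_{i_j}$ to $v_\ell$ closed up by the edge $(v_\ell,v_{i_j})$. This is a directed cycle of length $\ell-i_j+1\ge 2$. Since the $i_j$ are pairwise distinct, these $k$ lengths are pairwise distinct, which proves the statement.

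The only point needing care is that $D$ is finite, so a longest path exists, and that $D$ is simple, so no loops occur. Even so, a cycle of length $2$ is allowed, since anti-parallel edges are permitted. There is no real obstacle here: the key step is simply the observation that the out-neighbours of the endpoint of a maximal path all lie on the path.
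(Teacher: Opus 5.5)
Your proof is correct. It is the standard longest-path argument: all out-neighbours of the last vertex $v_\ell$ of a longest path lie on the path, which yields $k$ cycles of lengths $\ell-i_j+1$. This is the ``simple argument'' the paper refers to when it imports this statement from \cite{rapha} without reproving it. Your opening sentence announces a reduction to a subdigraph that you never carry out and do not need, so it can simply be deleted.
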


Hence, moving on in the proof, we can assume that there are no two vertex-disjoint subdigraphs of large minimum out-degree.

We then prove a key lemma, \cref{lem:DefaultOrientationIsATangle}. It is one of the main technical innovations of this paper and states that under the above assumption, orienting each small-order separation $\{X,Y\}$ of~$D$ towards its side that contains the endpoints of all the crossing edges of~$D$ yields a tangle~$\tau$. (See the explanation in the beginning of \cref{subsec:THEtangle} for more details.) 

We then apply a `tangle-version' of the Directed Grid Theorem (see \cref{thm:CylindricalWallControlledByTangle}) to~$\tau$, which yields a large cylindrical wall~$W$ in~$D$ which is `controlled' by~$\tau$. Roughly speaking, `controlled' here means that for every small order separation, the tangle $\tau$ orients it towards the (unique) side of the separation containing a `substantial' part of $W$. In particular, since $\tau$ orients any small-order separation $\{X,Y\}$ of~$D$ with edges crossing from~$X$ to~$Y$ towards~$Y$, it forces the separation $\{X,Y\}$ to contain a row of~$W$ in its strict side $Y\setminus X$ (see \cref{thm:CylindricalWallControlledByTangle}). Thus, the wall~$W$ has the aforementioned desired property. Starting the initially described argument from this `good' wall then finishes the proof of the theorem.
\smallskip

Summarizing, the key novelty of our proof compared to the ideas from~\cite{rapha} lies in the idea of finding such a `good choice' of a wall rather than selecting it arbitrarily. For guiding the choice of this wall, directed tangles turn out to be the perfect complementary tool. Secondly, the idea of inducting over $k$ when finding disjoint subdigraphs of high out-degree is novel, too.

\section{Directed tangles} \label{subsec:DirectedTangles}

In this section, we prove two key auxiliary results about directed tangles. The latter form a way of `consistently' orienting all low-order directed separations in a digraph. Directed tangles were first introduced by Giannopoulou, Kawarabayashi, Kreutzer and Kwon~\cite{GKKKDirectedToT}, who generalised some of the well-developed theory of \emph{undirected} tangles to digraphs. The latter were first introduced by Robertson and Seymour in their famous Graph-Minors Series~\cite{GMX}, and have received much further attention since, see Diestel's book~\cite{tangles} for an overview of the topic and its applications.

We start by recalling the definition of directed tangles from~\cite{GKKKDirectedToT}.

\begin{definition}[Directed tangle, cf.\ \cite{GKKKDirectedToT}] \label{def:Tangle}
    Let $D$ be a digraph and $k \in \N$. A set $\tau$ of oriented separations of~$D$ of order $<k$ is called a \defn{$k$-tangle} in~$D$, if 
    \begin{enumerate}[label=\rm{(T\arabic*)}]
        \item \label{itm:Def:Tangle:1} for all separations $\{A,B\}$ of $D$ of order $< k$, we have $(A,B) \in \tau$ or $(B,A) \in \tau$, and
        \item \label{itm:Def:Tangle:2} if $(A_1, B_1), (A_2, B_2), (A_3, B_3) \in \tau$, then $A_1 \cup A_2 \cup A_3 \neq V(D)$.
    \end{enumerate}
\end{definition}

\noindent With this definition of tangle, we follow the definition of tangles in digraphs of \cite{GKKKDirectedToT} and (slightly) deviate from the standard definition of tangles in undirected graphs (as e.g.\ in \cite{Bibel}).

By definition, every $k$-tangle contains \emph{precisely} one of $(A,B)$ and $(B,A)$ for every separation $\{A,B\}$ of~$D$ of order $< k$: \ref{itm:Def:Tangle:1} implies that it contains at least one, and \ref{itm:Def:Tangle:2} implies that it contains at most one (as $A \cup B = V(D)$ for every separation $\{A,B\}$ of~$D$, this follows by taking $(A_1,B_1) := (A_2, B_2) := (A,B)$ and $(A_3,B_3) := (B,A)$ in~\ref{itm:Def:Tangle:2}).

\subsection{The `default orientation' tangle} \label{subsec:THEtangle}

In undirected graphs, for a separation $\{A,B\}$, there is no edge with one end in $A\setminus B$ and one in $B \setminus A$. 
In directed graphs~$D$ however, a separation $\{A,B\}$ is allowed to have such edges as long as they all agree to cross from $A$ to $B$ (or the other way around).
In the case that we will be interested in here, i.e.\ when $D$ has minimum out-degree at least $\delta^+(D)\ge d$ but does not contain two vertex-disjoint subdigraphs both of minimum out-degree at least~$d/4$ (cf.\ paragraph before \cref{lem:trains}), all proper separations of order $<d/4$ will be crossed by at least one edge of~$D$. Indeed, if there was a proper separation $\{A,B\}$ of~$D$ of order $<d/4$ that is not crossed by any edge of~$D$, then $D[A\setminus B]$ and $D[B\setminus A]$ are both subdigraphs of minimum out-degree $\delta^+(D) - |A \cap B| \geq d - d/4 = 3d/4$.  
Note that $\{A,B\}$ being proper ensures that both $A \setminus B$ and $B \setminus A$ are non-empty.
Therefore, we obtain a set~$\tau$ satisfying \ref{itm:Def:Tangle:1} for $k=d/4$ by letting $(A,B) \in \tau$ whenever $\{A, B\}$ is a separation of $D$ of order $<d/4$ that is crossed by an edge from $A$ to $B$. Intuitively, one can think of $(A,B)$ as the `default' orientation of $\{A, B\}$ induced by the crossing edges. (Formally, to satisfy \ref{itm:Def:Tangle:1}, we also need to include $(A, V(D))$ or $(V(D), A)$ in $\tau$ for all non-proper separations $\{A,V(D)\}$ of $D$ of order $<d/4$. To have any chance that $\tau$ is a tangle, we need to put $(A, V(D)) \in \tau$).  
The following lemma asserts that the set~$\tau$ of oriented separations defined in this way is indeed a tangle.
\begin{lemma} \label{lem:DefaultOrientationIsATangle}
    Let $d \in \N$ with $d \geq 4$, and let $D$ be a digraph with $\delta^+(D)\ge d$. Let $\tau$ be the set comprising 
    \begin{itemize}
        \item $(A,B)$ for all proper separations $\{A, B\}$ of $D$ of order $<d/4$ such that no edge crosses from~$B$ to~$A$, and
        \item $(A, V(D))$ for all $A \subseteq V(D)$ of size $< d/4$.
    \end{itemize} 
    If $D$ does not contain two vertex-disjoint subdigraphs of minimum out-degree $\geq d/4$, then $\tau$ is a $(d/4)$-tangle.
\end{lemma}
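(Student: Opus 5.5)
We verify the two tangle axioms directly. Throughout, write $S(A,B):=B\setminus A$ for the `strict big side' of an oriented separation in~$\tau$. Two observations drive the proof.

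First, every $(A,B)\in\tau$ has the property that no edge starts in $B\setminus A$ and ends in $A\setminus B$. For proper separations this is the defining condition. For $(A,V(D))$ it holds trivially, since $A\setminus V(D)=\emptyset$.

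Second, we use a degree count repeatedly. Suppose $Z\subseteq V(D)$ is nonempty and every out-neighbour of every $z\in Z$ lies in $Z\cup T$, where $|T|<d/2$. Then $\delta^+(D[Z])>d-d/2\ge d/4$.

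\emph{Axiom~\ref{itm:Def:Tangle:1}.} Let $\{A,B\}$ be a separation of order $<d/4$.

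If it is not proper, say $B=V(D)$, then $|A|<d/4$, so $(A,V(D))\in\tau$.

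If it is proper, then by the definition of a separation, edges cannot cross in both directions. Suppose no edge crosses in either direction. Then every out-neighbour of a vertex of $A\setminus B$ lies in $A$. So $D[A\setminus B]$ has minimum out-degree $\ge d-|A\cap B|>3d/4$, and symmetrically so does $D[B\setminus A]$. Both sets are nonempty by properness and they are disjoint, which contradicts the hypothesis. Hence edges cross in exactly one direction, say from $A$ to $B$. Then no edge crosses from $B$ to $A$, and $(A,B)\in\tau$.

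\emph{Axiom~\ref{itm:Def:Tangle:2}.} Suppose $(A_i,B_i)\in\tau$ for $i=1,2,3$ with $A_1\cup A_2\cup A_3=V(D)$. Set $S_i:=B_i\setminus A_i$. The covering assumption is equivalent to $S_1\cap S_2\cap S_3=\emptyset$.

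We first show each $D[S_i]$ has minimum out-degree $\ge 3d/4$ and is nonempty. By the first observation, every out-neighbour of a vertex in $S_i$ lies in $B_i=S_i\cup(A_i\cap B_i)$, and $|A_i\cap B_i|<d/4$, so the degree bound follows. For nonemptiness: a proper separation has $S_i\neq\emptyset$ directly. For $(A,V(D))$ we get $S_i=V(D)\setminus A$, which is nonempty because $|V(D)|>d>|A|$.

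Now consider $S_1\cap S_2$. If it is empty, then $D[S_1]$ and $D[S_2]$ are two vertex-disjoint subdigraphs of minimum out-degree $\ge d/4$, a contradiction.

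Otherwise, let $I:=S_1\cap S_2\neq\emptyset$. Every out-neighbour of a vertex of $I$ lies in $B_1\cap B_2\subseteq I\cup (A_1\cap B_1)\cup(A_2\cap B_2)$. The set $T:=(A_1\cap B_1)\cup(A_2\cap B_2)$ has size $<d/2$, so by the degree count $\delta^+(D[I])>d/2\ge d/4$. Moreover $I\cap S_3=\emptyset$ because the triple intersection is empty. So $D[I]$ and $D[S_3]$ are vertex-disjoint subdigraphs of minimum out-degree $\ge d/4$, again a contradiction. This proves~\ref{itm:Def:Tangle:2}, and hence $\tau$ is a $(d/4)$-tangle.

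The main obstacle is~\ref{itm:Def:Tangle:2}. The key step is to choose the strict sides $B_i\setminus A_i$, rather than the small sides $A_i$, as the candidates for dense disjoint subdigraphs. The first observation is exactly what makes these sets nearly out-closed. After that, the pairwise-intersection case split is routine.
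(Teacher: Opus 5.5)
Your proof is correct, and for (T2) it is more direct than the paper's.

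\emph{The paper's route.} It deletes the union $S$ of all three separators at once and works with $B_i'=B_i\setminus S$, which are dense because $|S|<3d/4$. It then argues through the Venn diagram in two steps. First it shows that the private regions $A_i''=A_i'\setminus(A_j'\cup A_k')$ are empty, since each would be a dense set disjoint from $B_i'$. Only then does it conclude that $B_1',B_2',B_3'$ are pairwise disjoint.

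\emph{Your route.} You remove from each $B_i$ only its own separator and note that the covering hypothesis is exactly $S_1\cap S_2\cap S_3=\emptyset$. You then split on whether $S_1\cap S_2$ is empty. In the non-empty case, $S_1\cap S_2$ is out-closed up to just two separators and is disjoint from $S_3$. This replaces the paper's two claims by a single two-case argument.

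\emph{What each buys.} Because you never subtract more than two separators, your argument has slack. You get minimum out-degree $>d/2$, whereas the paper's count with $|S|<3d/4$ lands exactly on $d/4$. So your argument would also work under the weaker hypothesis that there are no two disjoint subdigraphs of minimum out-degree $>d/2$, or with the order bound $d/4$ raised to $3d/8$. The paper's symmetric treatment gives the stronger conclusion that all three $B_i'$ would be pairwise disjoint, but this is not needed.

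\emph{Minor point.} In (T1), ruling out proper separations crossed by no edge is unnecessary. (T1) only asks that at least one orientation lie in $\tau$, which follows immediately from the definition of a separation. That at most one lies in $\tau$ already follows from (T2), which your argument covers.
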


\begin{proof}
    By definition, $\tau$ satisfies \ref{itm:Def:Tangle:1}.
    To verify \ref{itm:Def:Tangle:2}, let us suppose for a contradiction that there are separations $(A_1, B_1), (A_2, B_2), (A_3, B_3) \in \tau$ such that $A_1 \cup A_2 \cup A_3 = V(D)$. Let
    \[
    \defnm{S} := (A_1 \cap B_1) \cup (A_2 \cap B_2) \cup (A_3 \cap B_3),
    \]
    and let $\defnm{A'_i} := A_i \setminus S$ and $\defnm{B'_i} := B_i \setminus S$ for all $i \in [3]$. By assumption, $\defnm{B'} := B'_1 \cap B'_2 \cap B'_3 = \emptyset$.

    \begin{figure}[ht]
        \centering
        \includegraphics[width=0.6\linewidth]{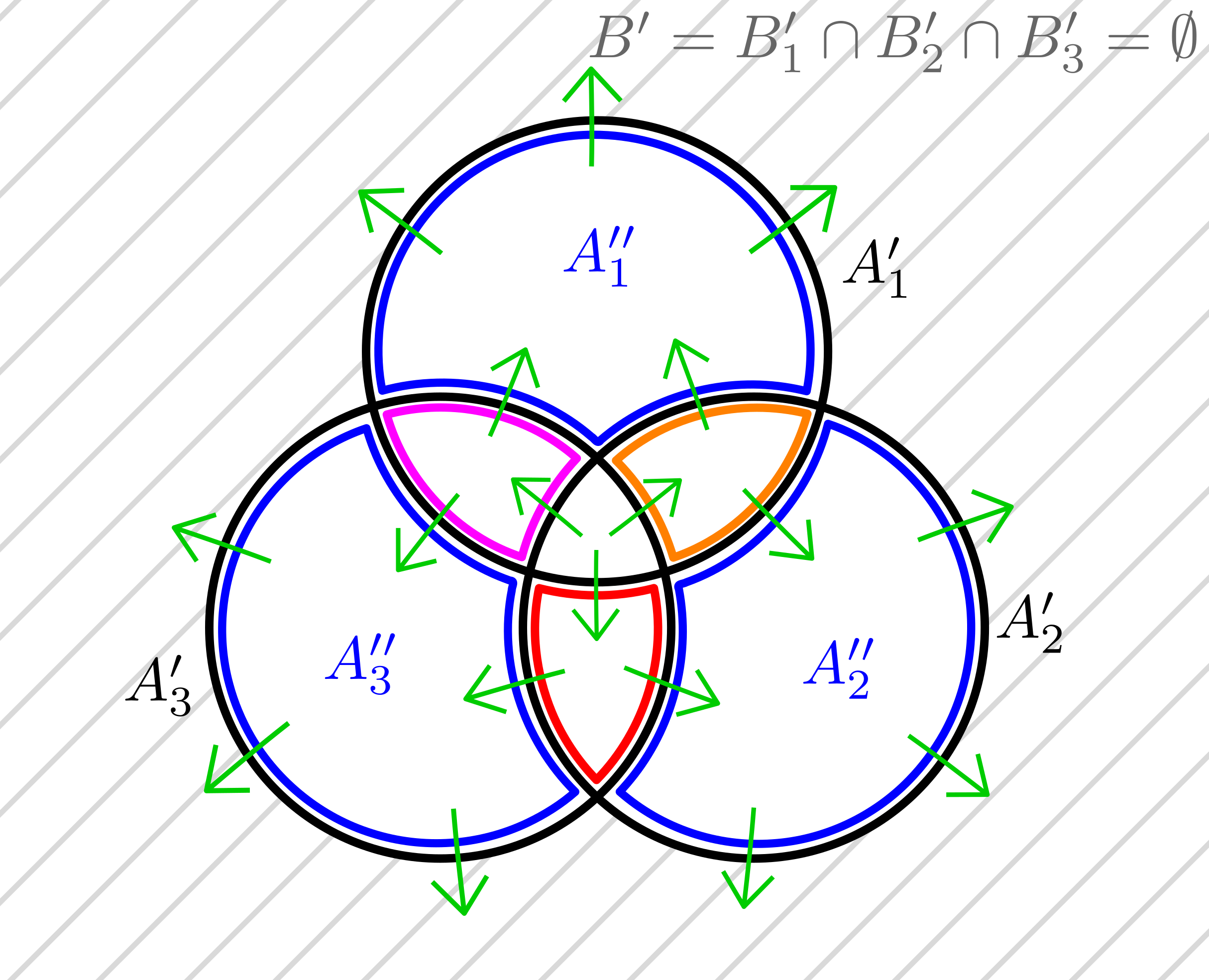}
        \caption{A Venn diagram of the sets $A'_i$, for $i \in [3]$, from the proof of \cref{lem:DefaultOrientationIsATangle}.  In $V(D) \setminus S$, the complement of $A'_1 \cup A'_2 \cup A'_3$ is $B' = B'_1 \cap B'_2 \cap B'_3$. The green arrows indicate in which direction edges of~$D$  are allowed to cross.
        By \cref{claim:DefaultOrientTangle:2}, the sets $A''_i$ (indicated in blue) are empty. By \eqref{eq:DefaultOrientationTangle:2}, $B'_1$ is contained in $\big(A'_2 \cap A'_3) \setminus A'_1\big)$ (in red), $B'_2$ is contained in $\big(A'_1 \cap A'_3) \setminus A'_2\big)$ (in pink), and $B'_3$ is contained in $\big(A'_1 \cap A'_2) \setminus A'_3\big)$ (in orange). Hence, $B'_1, B'_2, B'_3$ are disjoint. By \cref{claim:DefaultOrientTangle:1}, this contradicts the assumption on~$D$.}
        \label{fig:DefaultOrientationTangle}
    \end{figure}
    
    \begin{claim} \label{claim:DefaultOrientTangle:1}
        For all $i \in [3]$, the subdigraph $D[B'_i]$ of~$D$ is non-empty and satisfies $\delta^+(D[B'_i]) \geq d/4$.
    \end{claim}

    \begin{claimproof}
        We first show that $B'_i$ is non-empty.
        Since $\delta^+(D) \geq d$, we have $|V(D)| > d$, which implies that the separation $\{V(D), V(D)\}$ has order~$>d \geq d/4$, and thus $(V(D), V(D))$ is not contained in~$\tau$.
        By the definition of~$\tau$, it follows that $A_i \neq V(D)$, which in turn implies that $B_i \setminus A_i \neq \emptyset$. 
        
        Let $v \in B_i \setminus A_i$. Again by the definition of~$\tau$, no edge crosses from~$B_i$ to~$A_i$, which implies that $N^+_D(v) \subseteq B_i$. Since $\delta^+(D) \geq d$, it follows that $|B_i| \geq d$. As $|S| < 3d/4$, we have $|B'_i| = |B_i \setminus S| \geq |B_i| - |S| \geq d - 3d/4 \geq 1$, and therefore $B'_i$ is not empty.
        \medskip

        By the definition of $\tau$, no edge of~$D$ crosses from $B'_i \subseteq B_i$ to $A'_i \subseteq A_i$. Hence, for every $v \in B'_i$, its out-neighbourhood $N^+_D(v)$ is contained in $B'_i \cup S$. As $|S| < 3d/4$ and $|N^+_D(v)| \geq \delta^+(D) \geq d$, it follows that $|N^+_D(v) \cap B'_i| \geq d/4$, and thus $d^+_{D[B'_i]}(v) \geq d/4$.
        This implies that $\delta^+(D[B'_i]) \geq d/4$. 
    \end{claimproof}

    Let $\defnm{A''_1} := A'_1 \setminus (A'_2 \cup A'_3)$, and analogously $\defnm{A''_2} := A'_2 \setminus (A'_1 \cup A'_3)$ and $\defnm{A''_3} := A'_3 \setminus (A'_1 \cup A'_2)$ (for an illustration of the $A''_i$'s see \cref{fig:DefaultOrientationTangle}). 

    \begin{claim} \label{claim:DefaultOrientTangle:2}
        $A''_i = \emptyset$ for all $i \in [3]$.
    \end{claim}

    \begin{claimproof}
        Without loss of generality let $i = 1$. 
        We show that $d^+_{D[A''_1]}(v)\geq d/4$ for every $v \in A''_1$. Let us first explain how this concludes the proof. If $A''_1 \neq \emptyset$, it follows that $D[A''_1]$ is non-empty and satisfies $\delta^+(D[A''_1]) \geq d/4$. Since $A''_1 \subseteq A'_1 = A_1 \setminus S \subseteq A_1 \setminus B_1$ and $B'_1 \subseteq B_1$, the two subdigraphs $D[A''_1]$ and $D[B'_1]$ are vertex-disjoint. As $D[B'_1]$ is also non-empty and satisfies $\delta^+(D[B'_1]) \geq d/4$ by \cref{claim:DefaultOrientTangle:1}, this then contradicts the assumptions of~$D$.
        \medskip

        Therefore, it suffices to show that $d_{D[A''_1]}^+(v) \geq d/4$ for every $v \in A''_1$. By the definition of~$\tau$ and because
        \[
            A''_1 = A'_1 \setminus (A'_2 \cup A'_3) = A_1 \setminus (S \cup A'_2 \cup A'_3) \subseteq A_1 \setminus (A_2 \cup A_3) \subseteq V(D) \setminus (A_2 \cup A_3) \subseteq (B_2\setminus A_2) \cap (B_3\setminus A_3),
        \]
        no edge in $D$ starts in $A''_1$ and ends in $A'_2 \subseteq A_2$ or in $A'_3 \subseteq A_3$.
        
        As $V(D) = A'_1\; \dot\cup\; S\; \dot\cup\; B'_1$ and $A''_1 = A'_1 \setminus (A'_2 \cup A'_3)$, we have 
        \[
            V(D) \setminus (A''_1 \cup S) = (A'_1 \cup B'_1) \setminus A''_1 = (A'_1 \cup B'_1) \setminus \big(A'_1 \setminus (A'_2 \cup A'_3)\big) \subseteq B'_1 \cup A'_2 \cup A'_3.
        \]
        Combining this with 
        \begin{equation} \label{eq:DefaultOrientTangle:1}
            \begin{aligned}
                B'_1 &\subseteq (B'_1 \cap B'_2) \cup (B'_1 \cap A'_2) \subseteq (B'_1 \cap B'_2 \cap B'_3) \cup (B'_1 \cap B'_2 \cap A'_3) \cup (B'_1 \cap A'_2)\\
                &\subseteq B' \cup A'_3 \cup A'_2 \overset{B' = \emptyset}{=} A'_2 \cup A'_3
            \end{aligned}
        \end{equation}
        yields
        \[
        V(D) \setminus (A''_1 \cup S) \subseteq A'_2 \cup A'_3
        \]
        
        (see also \cref{fig:DefaultOrientationTangle}). Together with our earlier observation that no edge of~$D$ starts in $A''_1$ and ends in $A'_2 \cup A'_3$, this implies that $N^+_D(v) \subseteq A''_1 \cup S$ for every $v \in A''_1$. Since $\delta^+(D) \geq d$ and $|S| < 3d/4$, it follows that $d^+_{D[A''_1]}(v) \geq d/4$.
        As explained previously, this concludes the proof of the claim.
    \end{claimproof}

    By definition, $A'_1 \cap B'_1 = \emptyset$, and therefore $B'_1 = B'_1 \setminus A'_1$. It follows from \eqref{eq:DefaultOrientTangle:1} and \cref{claim:DefaultOrientTangle:2} that
    \begin{equation} \label{eq:DefaultOrientationTangle:2}
        \begin{aligned}
            B'_1 &= B'_1 \setminus A'_1 \overset{\eqref{eq:DefaultOrientTangle:1}}{\subseteq} (A'_2 \cup A'_3) \setminus A'_1 = \big((A'_2 \cap A'_3) \cup (A'_2 \setminus A'_3) \cup (A'_3 \setminus A'_2)\big) \setminus A'_1\\ 
            &= \big((A'_2 \cap A'_3) \setminus A'_1\big) \cup A''_2 \cup A''_3 \overset{\rm{\cref{claim:DefaultOrientTangle:2}}}{=} \big((A'_2 \cap A'_3) \setminus A'_1\big).
        \end{aligned}
    \end{equation}
    Similarly, $B'_2 \subseteq \big((A'_1 \cap A'_3) \setminus A'_2\big)$ and $B'_3 \subseteq \big((A'_1 \cap A'_2) \setminus A'_3\big)$. Hence, $B'_1, B'_2, B'_3$ are pairwise disjoint (cf.\ \cref{fig:DefaultOrientationTangle}). By \cref{claim:DefaultOrientTangle:1}, this contradicts that $D$ does not contain two vertex-disjoint subdigraphs of minimum out-degree $\geq d/4$. This contradiction shows that our initial assumption was false, $\tau$ indeed satisfies also~\cref{itm:Def:Tangle:2} and is thus a tangle, and we may conclude the proof.
\end{proof}

\subsection{Cylindrical wall controlled by a tangle} \label{subsec:CylindWallControlledByTangle}

For undirected graphs~$G$, it is known that for every $k \in \N$ and every tangle~$\tau$ in~$G$ of order $\gg k$, there is some wall of order~$k$ in~$G$ that essentially `induces' the orientations of small-order separations in~$\tau$, in the sense that it is mostly contained on the $B$-side of every separation $(A,B) \in \tau$ of order less than~$k$ (see e.g.\ \cite[(7.5)]{GMX} or \cite[Lemma~14.6]{KTW20}). In this section, we prove the following analogue of this result for directed graphs, by combining results from previous work in the literature.

\begin{theorem}  \label{thm:CylindricalWallControlledByTangle}
    There exists a function $f_{\ref{thm:CylindricalWallControlledByTangle}}: \N \to \N$ such that the following holds for all digraphs~$D$ and $k \in \N$. 
	Let $\tau$ be an $f_{\ref{thm:CylindricalWallControlledByTangle}}(k)$-tangle in~$D$. Then there is a cylindrical wall $W \subseteq D$ of order~$3k-2$ such that for all separations $\{A,B\}$ of~$D$ of order~$<k$
    \[
	(A,B) \in \tau\quad \Leftrightarrow\quad \text{ there exist a row and a column of } W \text{ that are contained in } D[B\setminus A].
	\]
\end{theorem}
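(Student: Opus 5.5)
The plan is to reduce the statement to two known ingredients from the literature and then check the two implications directly from the structure of cylindrical walls. The first ingredient is the duality between directed tangles and brambles/havens. Giannopoulou, Kawarabayashi, Kreutzer and Kwon~\cite{GKKKDirectedToT} show that a high-order directed tangle yields a large \emph{well-linked set}, or equivalently a bramble of large order, that is \emph{controlled} by $\tau$. Concretely, for every $(A,B)\in\tau$ of small order, ``most'' of this set lies in $B\setminus A$. The second ingredient is the Directed Grid Theorem, in the version of \cite{KKDirectedGridThm,HKMM24} which starts from a large well-linked set (or bramble) $Z$ rather than from large directed tree-width. This version returns a cylindrical wall $W$ of order $3k-2$ all of whose rows and columns are ``linked'' to $Z$, in the sense that $Z$ cannot be separated from a positive fraction of the rows and columns of $W$ by fewer than $k$ vertices. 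I would choose $f_{\ref{thm:CylindricalWallControlledByTangle}}(k)$ large enough that both steps apply, producing a set $Z$ with $|Z|\gg k$ controlled by $\tau$ and a wall $W$ controlled by $Z$.

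Next, I would show that for every separation $\{A,B\}$ of order $<k$, exactly one of $D[A\setminus B]$ and $D[B\setminus A]$ contains both a row and a column of $W$. At least one side contains a row and a column: $W$ has $2(3k-2)$ rows and $3k-2$ columns, and the separator $A\cap B$ meets at most $k-1$ rows and at most $k-1$ columns. Every row meets every column, and the rows and columns that avoid $A\cap B$ form a connected subgraph. Since there are no edges between the two strict sides in one direction, this is enough: an untouched column is a directed cycle, so its vertex set lies entirely on one strict side, and an untouched row meets each untouched column, so all untouched rows and columns lie on the same side. Hence there is a unique side containing an entire row and an entire column, and no row-and-column pair fits on the other side. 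Here the choice of order $3k-2$ leaves enough untouched elements even after $k-1$ deletions.

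Finally, I would compare this side with $\tau$. If $(A,B)\in\tau$, then $Z$ is mostly in $B\setminus A$, and the linkage between $Z$ and the rows and columns of $W$ (fewer than $k$ vertices cannot cut them off) forces the untouched rows and columns into $B\setminus A$; this gives $\Rightarrow$. For $\Leftarrow$, suppose $B\setminus A$ contains a row and a column. By uniqueness, $A\setminus B$ does not. If $(B,A)\in\tau$, the same argument would place a row and a column in $A\setminus B$, which is a contradiction, so \ref{itm:Def:Tangle:1} gives $(A,B)\in\tau$.

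I expect the main obstacle to be the step where $Z$ controls the wall. The grid theorem in \cite{HKMM24} or \cite[Theorem~5.12]{HKMMDirectedGridThm2} must be invoked in a form where the wall is built from paths linked to the given well-linked set. Otherwise the wall might sit in some unrelated highly connected region, and then the orientation by $\tau$ and the position of the wall need not agree. To handle this, I would first try restricting $D$ to the union of the linkages given by well-linkedness of $Z$, so that any wall found there is automatically tied to $Z$. If needed, I would also pass to a subwall to keep the order exactly $3k-2$.
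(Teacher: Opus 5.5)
Your overall architecture is the paper's: from the tangle you build a bramble, take a well-linked set mostly lying on the big side of every $(A,B)\in\tau$, and apply the Kawarabayashi--Kreutzer grid theorem relative to that set. The version you want exists as \cref{thm:GridTheorem:WellLinkedSet}. It gives linkages in \emph{both} directions between the set and any $3k-1$ vertices of degree $\ge 3$ in the wall, so your fallback of restricting $D$ to a union of linkages is unnecessary (and unjustified). Your $\Leftarrow$ direction is fine once $\Rightarrow$ holds, since every row meets every column.

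There is, however, a genuine gap in the row part of $\Rightarrow$. Your second paragraph claims that all rows avoiding $A\cap B$ lie in a single strict side. This is false for directed separations. A row is only a directed path, and a directed path avoiding the separator may cross from one strict side to the other in the permitted direction; only the columns, being directed cycles, are confined to one side. For a concrete example, take an elementary cylindrical wall as $D$. Let $u=(1,y)$ be the first vertex of an odd row $R_y$, and let $v=(1,y-1)$ be its unique in-neighbour, which lies on $R_{y-1}$. Then $A=\{u,v\}$, $B=V(D)\setminus\{u\}$ is a separation of order $1$ with no edge crossing from $B$ to $A$. Yet $R_y$ avoids $A\cap B$ and meets both strict sides. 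Consequently, ``the linkage forces the untouched rows into $B\setminus A$'' does not follow. Linkage tells you that at least $k$ rows \emph{meet} $B\setminus A$ at their linked vertices. It does not tell you that any row lies \emph{inside} $B\setminus A$, since the row may leave through the permitted direction. (For columns this does work directly: at least $k$ columns meet $B\setminus A$, one of them avoids $A\cap B$, and being a directed cycle it lies in $D[B\setminus A]$.)

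The missing idea is to place the linked vertices on the rows according to the direction of the separation. The paper takes $U$ to consist of the first (leftmost) vertex of every second row:
\begin{itemize}
\item of the even rows, which are directed \emph{towards} that vertex, if no edge crosses from $A$ to $B$;
\item of the odd rows, directed \emph{away} from it, if no edge crosses from $B$ to $A$.
\end{itemize}
With this choice, a row whose $U$-vertex lies in $B\setminus A$ and which meets $A$ must meet $A\cap B$. Hence among the at least $k$ rows whose $U$-vertex lies in $B\setminus A$, at least one lies entirely in $D[B\setminus A]$. For \cref{thm:GridTheorem:WellLinkedSet} to apply to these row-endpoints, they must have degree $3$ in the wall it returns. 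This is why the paper obtains a wall $W'$ of order $3k-1$ and deletes its first column (and trims two rows) to get the wall $W$ of order $3k-2$.
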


In particular, we obtain the following more `tangle-like' formulation of \cref{thm:CylindricalWallControlledByTangle}, which extends \cite[Lemma~14.6]{KTW20} verbatim to digraphs.

\begin{corollary}
    There exists a function $f: \N \to \N$ such that for every $k \in \N$, every digraph~$D$, and every tangle~$\tau$ of~$D$ of order~$\geq f(k)$ there is a cylindrical wall $W \subseteq D$ of order $3k-2$ such that $\tau_W \subseteq \tau$. \qed
\end{corollary}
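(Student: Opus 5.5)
The plan is to obtain the corollary from \cref{thm:CylindricalWallControlledByTangle} by restricting the tangle to the right order and reading off one direction of the equivalence. I will set $f := f_{\ref{thm:CylindricalWallControlledByTangle}}$. I read $\tau_W$, which the paper has not formally defined at this point, as the set of oriented separations $(A,B)$ of $D$ of order $<k$ such that some row and some column of $W$ lie in $D[B\setminus A]$. This is the natural analogue of the undirected notion in~\cite{KTW20}.

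\emph{Step 1: restrict the order.} Let $\tau$ be a tangle of order $\ell \ge f(k)$, that is, an $\ell$-tangle. Let $\tau'$ be the set of all $(A,B)\in\tau$ with $|A\cap B|<f(k)$. I check that $\tau'$ is an $f(k)$-tangle.
\begin{itemize}
  \item Axiom \ref{itm:Def:Tangle:1} holds because every separation of order $<f(k)\le \ell$ is oriented by $\tau$, and that orientation has order $<f(k)$, so it lies in $\tau'$.
  \item Axiom \ref{itm:Def:Tangle:2} holds because $\tau'\subseteq\tau$.
\end{itemize}

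\emph{Step 2: apply the theorem.} Applying \cref{thm:CylindricalWallControlledByTangle} to $\tau'$ gives a cylindrical wall $W\subseteq D$ of order $3k-2$. For every separation $\{A,B\}$ of order $<k$, we have $(A,B)\in\tau'$ if and only if some row and some column of $W$ lie in $D[B\setminus A]$.

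\emph{Step 3: read off the inclusion.} Take $(A,B)\in\tau_W$. By definition $\{A,B\}$ has order $<k$ and a row and a column of $W$ lie in $D[B\setminus A]$. The "$\Leftarrow$" direction of the theorem then gives $(A,B)\in\tau'\subseteq\tau$, so $\tau_W\subseteq\tau$. The "$\Rightarrow$" direction also gives the reverse inclusion for separations of order $<k$, but the corollary does not need it.

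I expect no real obstacle; the only point to be careful about is matching conventions. If "tangle of order $\ge f(k)$" is read literally as an $f(k)$-tangle, Step 1 is trivial. If $\tau_W$ is defined with some other threshold on the order of its separations, it suffices to enlarge $f$ and to use that every separation of order less than that threshold is also of order $<f(k)$, so Step 1 covers it.
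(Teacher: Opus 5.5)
Your proposal is correct and follows the paper's argument, which the paper leaves implicit by marking the corollary with \qed: restrict a tangle of order $\ge f(k)$ to its separations of order $<f(k)$ (the paper does the same thing in the proof of the main theorem), apply \cref{thm:CylindricalWallControlledByTangle}, and use the ``$\Leftarrow$'' direction of the equivalence to get $\tau_W\subseteq\tau$. Your reading of $\tau_W$ also coincides with the definition the paper gives immediately after the statement.
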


Here, $\tau_W$ is the set of oriented separations $(A, B)$ of~$D$ of order less than~$k$ such that $D[B \setminus A]$ contains both a column and a row of~$W$ (that such a side exists for every separation $\{A,B\}$ of $D$ of order~$<k$ is straightforward to check).
\medskip

We need two ingredients for the proof of \cref{thm:CylindricalWallControlledByTangle}. 
The first ingredient is the Directed Grid Theorem of Kawarabayashi and Kreutzer~(\cref{thm:directedgrid}), which we will use as a black box. More precisely, we use a stronger version of the Directed Grid Theorem that gives a directed grid with respect to some given well-linked set.

\begin{definition}[Well-linked set]
    A set~$X$ of vertices of a digraph~$D$ is \defn{well-linked} if for all subsets $Y, Z$ of $X$ with $|Y| = |Z|$ there are $|Y|$ pairwise vertex-disjoint directed paths starting in~$Y$ and ending in~$Z$ in~$D$ and $|Y|$ pairwise vertex-disjoint directed paths starting in~$Z$ and ending in~$Y$ in~$D$.
\end{definition}

The following result is due to Kawarabayashi and Kreutzer (see also \cite[Theorem~2.4]{KKKXHalfIntEPDirectedOddCycles}).

\begin{theorem}[{\cite[Theorem~7.1]{KKDirectedGridThmArXiv}}] \label{thm:GridTheorem:WellLinkedSet}
    There exists a function $f_{\ref{thm:GridTheorem:WellLinkedSet}}: \N \to \N$ such that the following holds for all digraphs~$D$ and $k \in \N$. Let $X \subseteq V(D)$ be a well-linked set of size $\geq f_{\ref{thm:GridTheorem:WellLinkedSet}}(k)$. Then there is a cylindrical wall~$W$ of order~$k$ in~$D$ such that for every set~$U$ of $k$ vertices of~$W$ of total degree at least three in $W$ (i.e.\ of either out-degree~$\geq 2$ or in-degree~$\geq 2$), there are $k$ vertex-disjoint paths from~$X$ to~$U$ and from~$U$ to~$X$.
\end{theorem}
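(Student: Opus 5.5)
Since this statement is quoted from Kawarabayashi and Kreutzer, the plan is to re-run their proof of the Directed Grid Theorem and track one extra invariant: every object constructed stays robustly linked to the initial well-linked set~$X$. The final wall~$W$ is then chosen inside a much larger wall~$W_0$, so that Menger's theorem gives the required linkages between~$X$ and any set~$U$ of $k$ degree-three vertices of~$W$. The argument is symmetric: the paths from $U$ to $X$ are obtained exactly like those from $X$ to $U$, with all edges reversed. The obstacle step below is where this plan is least certain.

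\textbf{Building a large wall attached to $X$.} Starting from the well-linked set~$X$ of size $f_{\ref{thm:GridTheorem:WellLinkedSet}}(k)$, the Kawarabayashi--Kreutzer argument produces, in order,
\begin{itemize}
\item a large linkage of paths whose start and end segments lie in (or are linked into) $X$;
\item a ``path system'' in which these paths are pairwise well-linked;
\item a ``web'' and a ``fence'';
\item a cylindrical grid as a butterfly minor, and hence a cylindrical wall $W_0$ of some order $K \gg k$ as a subdigraph.
\end{itemize}
The invariant to maintain is that the rows of $W_0$ are subpaths of a linkage $\mathcal{L}$ whose ends lie in~$X$. Well-linkedness of~$X$ then means that, for any set~$Z$ with $|Z| < k$, most rows and columns of~$W_0$ avoid~$Z$. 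Moreover, the union of those avoiding rows and columns is connected in both directions to $X \setminus Z$ through members of~$\mathcal{L}$ that avoid~$Z$. Call this union the \emph{bulk} of $W_0$ with respect to~$Z$.

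\textbf{Choosing $W$ with spread-out branch vertices.} Take $W$ to be a subwall of order~$k$ of $W_0$ whose branch vertices are spread far apart in $W_0$, say at pairwise distance at least $k$ in the row/column grid structure. The point is that every degree-three vertex $u$ of $W$ then has at least $k$ internally disjoint paths inside $W_0$ into the bulk, running along $k$ consecutive rows and columns of $W_0$ near~$u$. For the $X$-to-$U$ direction these paths go from the bulk to~$u$; for the $U$-to-$X$ direction they go from~$u$ to the bulk.

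\textbf{The Menger step (main obstacle).} Suppose there are no $k$ disjoint paths from $X$ to $U$. By Menger's theorem there is a separator~$Z$ with $|Z| < k$ separating $X$ from~$U$. This step is the main obstacle, because a vertex of a plain wall can be cut off by only three vertices. The intended contradiction runs as follows.
\begin{itemize}
\item Since $|Z| < k = |U|$, some $u \in U \setminus Z$ exists.
\item By the choice of $W$ inside $W_0$, one of the $k$ disjoint paths from the bulk to $u$ avoids~$Z$.
\item That path starts in the bulk, which $X \setminus Z$ reaches through $\mathcal{L}$ avoiding~$Z$.
\item This gives a path from $X$ to $u$ avoiding $Z$, contradicting that $Z$ is a separator.
\end{itemize}
What still needs checking is that the rows and columns near~$u$ used by these paths actually belong to the bulk, that is, that they are not destroyed by~$Z$. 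I expect this to reduce to a counting argument, using $K \gg k$ and the fact that $|Z| < k$ meets fewer than $k$ rows and fewer than $k$ columns. Together with the careful bookkeeping of $X$-linkedness through the Kawarabayashi--Kreutzer construction, this is where the real work lies.
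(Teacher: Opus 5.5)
The paper does not prove this statement; it imports it as a black box from Kawarabayashi and Kreutzer (Theorem~7.1 of the arXiv version of their grid-theorem paper). Your proposal therefore has to stand on its own, and it does not. Its load-bearing step is the claim that the Kawarabayashi--Kreutzer construction can be run so that the rows of the final wall $W_0$ are subpaths of distinct members of a linkage $\mathcal{L}$ whose paths start and end in $X$. You give no argument for this. Their proof passes from the well-linked set through path systems, webs and fences, with repeated rerouting. It then obtains a cylindrical grid only as a butterfly minor and converts it into a wall subgraph. None of these steps visibly preserves an anchoring of the rows in $X$. Showing that the final wall stays robustly linked to $X$ in both directions is exactly what this theorem adds to the plain Directed Grid Theorem. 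So ``re-run their proof and track the invariant'' restates the theorem rather than proving it.

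The step you do spell out is also false as written. In a cylindrical wall every vertex has exactly one column edge and at most two row edges. So every vertex of $W_0$ has in- and out-degree at most $2$ in $W_0$, and branch vertices whose column edge is outgoing have in-degree $1$. Hence no $u$ has $k\ge 3$ internally disjoint paths inside $W_0$ ending at $u$, and $Z$ can cut $u$ off inside $W_0$ with one or two vertices. You therefore cannot fix $u\in U\setminus Z$ first; the counting has to run over $U$.

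Choose $W$ with rows inside rows of $W_0$ and columns among the columns of $W_0$, placed so that the following holds for each branch vertex $u$: the segment $N(u)$ of $u$'s $W_0$-row that crosses the $k$ columns of $W_0$ on each side of $u$ exists, and these segments are pairwise disjoint. Since $|Z|<k=|U|$ and $u\in N(u)$, some $N(u)$ avoids $Z$. Among the $k$ disjoint columns of $W_0$ crossing $N(u)$ upstream of $u$, one column $C$ avoids $Z$. Some row $R$ lies on a member $L\in\mathcal{L}$ avoiding $Z$. Following $L$ from $X$ to $R\cap C$, then around $C$ to $u$'s row, then along that row inside $N(u)$ to $u$, gives an $X$--$u$ path avoiding $Z$. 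Downstream columns give the $U$-to-$X$ direction. So the final step is repairable once the invariant is granted, but the invariant itself remains the unproved heart of the theorem.
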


The second ingredient to the proof of \cref{thm:CylindricalWallControlledByTangle} are `brambles'.

\begin{definition}[Bramble]
    A \defn{bramble} in a digraph~$D$ is a set $\cB$ of strongly connected subsets\footnote{A set $B$ of vertices of a digraph~$D$ is \defn{strongly connected} if $D[B]$ is strongly connected.} $B \subseteq V(D)$ such that $B \cap B' \neq \emptyset$ for all $B \neq B'$.

    A \defn{cover} of~$\cB$ is a set~$X$ of vertices of~$D$ such that $X \cap B \neq \emptyset$ for all $B \in \cB$. The \defn{order} of~$\cB$ is the minimum size of a cover of~$\cB$.
\end{definition}

Again, with this, we follow the definition of brambles in digraphs of \cite{GKKKDirectedToT} and (slightly) deviate from the standard definition of a bramble in undirected graphs (as e.g.\ in \cite{Bibel}).
\medskip

It is well-known that the existence of high-order tangles and brambles are qualitatively equivalent (and both are equivalent to having high directed tree-width).
The following lemma due to Giannopoulou et al.~\cite{GKKKDirectedToT} constructs a bramble from a tangle.

\begin{lemma}[{\cite[Lemmas 6.9 \& 6.10]{GKKKDirectedToT}\protect\footnote{That $\cB_\tau$ is a bramble of order $\geq k$ is shown in the proof of \cite[Lemma~6.10]{GKKKDirectedToT}, as can be seen from its  first two sentences.}}] \label{lem:BrambleFromTangle}
     Let $k \in \N$, and let $\tau$ be a $(2k-1)$-tangle in a digraph~$D$. For every set $S$ of less than~$k$ vertices of~$D$ there exists exactly one strong component $C_S$ of $D-S$ such that $V(C_S) \subseteq B$ for all $(A,B) \in \tau$ with $A \cap B = S$. 
     Moreover, $\cB_\tau := \{V(C_S) : S \subseteq V(D) \text{ with } |S| < k\}$ is a bramble of order~$\geq k$. 
\end{lemma}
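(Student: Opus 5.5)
The plan is to prove the three parts in order: existence and uniqueness of $C_S$, the intersection property, and the order bound. All arguments use only \ref{itm:Def:Tangle:1} and \ref{itm:Def:Tangle:2}, applied to separations whose separator is $S$ or a set $S\cup T$ of size $\le 2k-2<2k-1$. This is why the tangle is assumed to have order $2k-1$ rather than $k$.

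\emph{Step 1 (structure of separations with separator $S$).} Fix $S$ with $|S|<k$. If $\{A,B\}$ is a separation with $A\cap B=S$, then $A\setminus S$ and $B\setminus S$ partition $V(D)\setminus S$, and all crossing edges go one way. Hence every strong component of $D-S$ lies entirely on one side. Next, any two $(A_1,B_1),(A_2,B_2)\in\tau$ with separator $S$ satisfy $(B_1\cap B_2)\setminus S\neq\emptyset$. Otherwise $A_1\cup A_2=V(D)$, and \ref{itm:Def:Tangle:2} applied to the triple $(A_1,B_1),(A_2,B_2),(A_1,B_1)$ gives a contradiction.

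\emph{Step 2 (existence of $C_S$).} Choose $(A,B)\in\tau$ with $A\cap B=S$ and $B$ inclusion-minimal. Such a separation exists because $(S,V(D))$ is one of them: it has order $<k$, so it is oriented by \ref{itm:Def:Tangle:1}, and $(V(D),S)\notin\tau$ by \ref{itm:Def:Tangle:2}. I want to show that $B\setminus S$ is a single strong component. Suppose not. Split $B\setminus S$ along a topological order of the strong components of $D[B\setminus S]$ into a nonempty initial part $R$ and a nonempty final part $C'$, chosen compatibly with the direction of the edges between $A$ and $B$. For example, if edges go only from $A$ to $B$, take $C'$ to be a sink component. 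Then $\{A\cup R,\ S\cup C'\}$ is a separation of order $|S|$. By minimality of $B$, the tangle $\tau$ must contain $(S\cup C',A\cup R)$. I would then derive a contradiction from \ref{itm:Def:Tangle:2}, using as the third separation $(S\cup R, A\cup C')$ when that is a separation. When it is not, I would use a separation with a separator enlarged by at most $|S|$ further vertices, which still has order $<2k-1$. Finding a third separation of this kind in the mixed-direction case is the step I expect to be the main obstacle. If the split cannot be made to work directly, I would first try a different choice of $C'$ among the source or sink components of $D[B\setminus S]$. Once $B\setminus S=V(C_S)$ is a single strong component, every other $(A',B')\in\tau$ with separator $S$ has $B'\setminus S$ a union of components that meets $B\setminus S$ by Step 1. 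Hence $C_S\subseteq B'$. Uniqueness is immediate, since two distinct components cannot both lie in $B\setminus S=V(C_S)$.

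\emph{Step 3 (bramble).} Each $V(C_S)$ is strongly connected by construction. For the intersection property, suppose $V(C_S)\cap V(C_T)=\emptyset$, and put $U:=S\cup T$, so $|U|\le 2k-2$. The component $C_U$ of $D-U$ exists by Step 2, which needs only $|U|<2k-1$ together with the tangle order. I would show that $C_U\subseteq C_S$. To do this, take the separation obtained from a split of $D-S$ that separates the component containing $C_U$ from $C_S$. Its $\tau$-orientation forces the component containing $C_U$ to be the one contained in every $B$-side, and that component is $C_S$. By symmetry $C_U\subseteq C_T$, contradicting disjointness. For the order, suppose $X$ is a cover of $\mathcal B_\tau$ with $|X|<k$. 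Then $C_X$ is a strong component of $D-X$, so $V(C_X)\cap X=\emptyset$. But $V(C_X)\in\mathcal B_\tau$, so $X$ does not cover $\mathcal B_\tau$. Hence every cover has size $\ge k$, and the bramble has order $\ge k$.
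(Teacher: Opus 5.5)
\textbf{There is a genuine gap in Step 2.} Your Step 1 and the final order argument are fine (the paper itself gives no proof here; it imports the lemma from GKKK). Step 2, however, rests on a false claim. An inclusion-minimal $B$ among the $(A,B)\in\tau$ with $A\cap B=S$ need \emph{not} have $B\setminus S$ equal to a single strong component. So no choice of ``third separation'' can give the contradiction you are after.

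\emph{Counterexample.} Let $D$ consist of three complete digraphs $K_1,K_2,K_3$, each on at least $6k$ vertices, together with all edges from $K_1$ to $K_2$ and all edges from $K_2$ to $K_3$. Let $\tau$ orient each separation of order $<2k-1$ towards the side whose strict part contains $K_2$ minus the separator. Then $\tau$ is a $(2k-1)$-tangle.

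Take $S=\emptyset$. The only separations with separator $S$ are $\{\emptyset,V(D)\}$, $\{K_1,K_2\cup K_3\}$ and $\{K_1\cup K_2,K_3\}$; the pair $\{K_2,K_1\cup K_3\}$ is crossed in both directions. Accordingly $\tau$ contains $(K_1,K_2\cup K_3)$ and $(K_3,K_1\cup K_2)$. Both minimal $B$-sides consist of two components. The component $C_\emptyset=V(K_2)$ is only their intersection; it is not the strict side of any separation with separator $S$.

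This already happens in your ``easy'' case, where edges go only from $A$ to $B$ and $C'$ is a sink component. With $B=K_2\cup K_3$ you correctly obtain $(S\cup C',A\cup R)=(K_3,K_1\cup K_2)\in\tau$, but there is simply no contradiction to derive.

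\textbf{The missing idea: obtain $C_S$ as an intersection of two extremal sides.}
\begin{enumerate}
\item Call $(A,B)\in\tau$ with $A\cap B=S$ \emph{forward} if no edge crosses from $B$ to $A$, and \emph{backward} if no edge crosses from $A$ to $B$.
\item If $(A_1,B_1)$ and $(A_2,B_2)$ are both forward, then $\{A_1\cup A_2,B_1\cap B_2\}$ is a separation with separator $S$. Applying (T2) to $(A_1,B_1),(A_2,B_2),(B_1\cap B_2,A_1\cup A_2)$ forces $(A_1\cup A_2,B_1\cap B_2)\in\tau$, which is again forward. The same holds for backward ones.
\item Hence there is a smallest forward side $U^*$ and a smallest backward side $L^*$. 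Every $B$-side contains $U^*$ or $L^*$, and (T2) gives $(U^*\cap L^*)\setminus S\neq\emptyset$.
\item Let $c$ be a strong component of $D-S$ inside $U^*\cap L^*$. Let $R^+$ and $R^-$ be the sets of vertices of $D-S$ reachable from $c$, respectively reaching $c$, in $D-S$.
\item The separation $\{V(D)\setminus R^+,S\cup R^+\}$ must be oriented towards $S\cup R^+$, since otherwise $L^*$ would miss $c$. Likewise $\{V(D)\setminus R^-,S\cup R^-\}$ must be oriented towards $S\cup R^-$, since otherwise $U^*$ would miss $c$.
\item Therefore $(U^*\cap L^*)\setminus S\subseteq R^+\cap R^-=V(c)$. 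This gives existence and uniqueness of $C_S$ for every $|S|<2k-1$.
\end{enumerate}

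\textbf{Step 3 needs its lifting argument stated.} Your sentence ``its $\tau$-orientation forces \dots'' hides the actual argument. Suppose $C_U$ lay in a component $C\neq C_S$ of $D-S$. By uniqueness there is $(A_1,B_1)\in\tau$ with separator $S$ and $C\subseteq A_1\setminus B_1$. Then $\{A_1\cup T,B_1\cup T\}$ is a separation with separator $U$ that has $C_U$ on its strict $A$-side, so $(B_1\cup T,A_1\cup T)\in\tau$. Together with $(A_1,B_1)$ this violates (T2), because $A_1\cup B_1\cup T=V(D)$. You should state this step explicitly.
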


The following lemma shows that any sizewise minimal cover of $\cB_\tau$ is closely related to~$\tau$, in the sense that it is mostly contained in the $B$-side of every small-order separation $(A,B) \in \tau$.

\begin{lemma} \label{lem:CoverOfBrambleLiesOnBigSide}
    Let $k \in \N$, and let $\tau$ be a $(2k-1)$-tangle in a digraph~$D$. Further, let $\cB_\tau$ be the bramble defined in \cref{lem:BrambleFromTangle}, and let $X$ be any sizewise minimal cover of $\cB_\tau$. Then for every separation $(A,B) \in \tau$ it holds that $|A \cap X| \leq |A \cap B|$.
\end{lemma}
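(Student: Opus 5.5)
The plan is the standard exchange argument: assuming $|A \cap X| > |A \cap B|$, we replace $A \cap X$ by the smaller set $S := A \cap B$ and show that the result still covers $\cB_\tau$, contradicting the minimality of $X$.

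Let $(A,B) \in \tau$, so $|A \cap B| < 2k-1$, and put $S := A \cap B$. Suppose for a contradiction that $|A \cap X| > |S|$, and set
\[
X' := (X \setminus A) \cup S.
\]
Then $|X'| \le |X| - |A\cap X| + |S| < |X|$. It therefore suffices to show that $X'$ still covers $\cB_\tau$, which contradicts the sizewise minimality of $X$.

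Take any $C = V(C_{S'}) \in \cB_\tau$ with $|S'| < k$. If $C$ meets $S$, or meets $X$ outside $A$, then $C$ meets $X'$ and we are done. So the remaining case is $C \cap S = \emptyset$ and $C \cap X \subseteq A \setminus B$; since $X$ is a cover, $C$ does meet $A \setminus B$. Because $C$ is strongly connected and avoids the separator $S$, while every edge between $A\setminus B$ and $B \setminus A$ goes in one direction only, $C$ lies entirely in $A \setminus B$. (A strongly connected set meeting both strict sides would need edges crossing in both directions.) So the whole difficulty is to show that no bramble element of $\cB_\tau$ is contained in $A\setminus B$ when $(A,B)\in\tau$.

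This last claim is the main obstacle; I would prove it with axiom \ref{itm:Def:Tangle:2}. The case $|S| < k$ is immediate: $C_S$ is by definition contained in $B$ and hence meets $B$. In general, suppose $C = V(C_{S'}) \subseteq A \setminus B$ with $|S'| < k$. Let $Z$ be the set of vertices of $D - S'$ that can reach $C_{S'}$ in $D-S'$, and form the separation $\{Z \cup S',\, V(D)\setminus Z\}$ (or the reversed construction via the vertices reachable from $C_{S'}$). This separation has order $|S'|<k$ and no edge enters $Z$ from outside, so it is a genuine directed separation. By the defining property of $C_{S'}$, $\tau$ orients it as $(V(D)\setminus Z,\, Z\cup S')$, i.e.\ with its big side containing $C_{S'}$.

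Now apply \ref{itm:Def:Tangle:2} to the triple $(A,B)$, $(V(D)\setminus Z, Z \cup S')$, and a third separation chosen so that the three small sides cover $V(D)$, contradicting \ref{itm:Def:Tangle:2}. If needed, take the third to be a separation of order $< 2k-1$ with separator $S \cup S'$ that isolates the piece of $C_{S'}$ inside $A\setminus B$; its order is $|S|+|S'| < 3k$, so I would first check that the bounds fit under the order $2k-1$. I expect some care with these orders, possibly first passing to $|S|<k$-type separations or arguing via the connectivity argument of \cite[Lemma~6.10]{GKKKDirectedToT}. Once no bramble set lies in $A\setminus B$, $X'$ is a cover, and the lemma follows.
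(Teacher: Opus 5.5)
Your reduction is sound and is the same as the paper's. With $X' := (X\setminus A)\cup S$, it suffices to show that no $C\in\cB_\tau$ is contained in $A\setminus B$. Your argument that a bramble element which avoids $S$ and meets $A\setminus B$ lies entirely in $A\setminus B$ is also correct. The problem is the proof of this key claim.

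\textbf{The case $|S|<k$.} You only note that $C_S$ itself lies in $B$. What is needed is twofold. First, $V(C_S)\subseteq B\setminus A$, since $C_S$ is a strong component of $D-S$. Second, \emph{every} $C\in\cB_\tau$ meets $V(C_S)$ by the bramble property, so no $C$ lies in $A\setminus B$. With this sentence added, this case is exactly the paper's proof.

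\textbf{The case $k\le |S|<2k-1$.} This case, which you call the main obstacle, is genuinely not proved. You never specify the third separation. Your candidate with separator $S\cup S'$ may have order up to $3k-3$, which exceeds $2k-2$ for $k\ge 2$. So it need not lie in $\tau$, and \ref{itm:Def:Tangle:2} says nothing about it.

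The missing idea is to use the two reachability separations at $S'$ together, rather than as alternatives. Let $Z$ be the set of vertices that can reach $C_{S'}$ in $D-S'$, and $Z'$ the set of vertices reachable from $C_{S'}$ in $D-S'$.
\begin{itemize}
\item Both $(V(D)\setminus Z,\, Z\cup S')$ and $(V(D)\setminus Z',\, Z'\cup S')$ are separations of order $|S'|<k$.
\item The defining property of $C_{S'}$ forces exactly these orientations into $\tau$, as you already observed for $Z$.
\item A vertex of $D-S'$ that both reaches and is reached from $C_{S'}$ lies in this strong component, so $Z\cap Z'=V(C_{S'})$. Hence the first sides of these two separations together cover $V(D)\setminus V(C_{S'})$.
\end{itemize}
So if $V(C_{S'})\subseteq A$, then $A$ and these two first sides cover $V(D)$, contradicting \ref{itm:Def:Tangle:2}. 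This needs no order bookkeeping and works for every $(A,B)\in\tau$.

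\textbf{Comparison with the paper.} The paper's proof applies \cref{lem:BrambleFromTangle} to $S:=A\cap B$, and therefore tacitly assumes $|A\cap B|<k$. This suffices for the lemma's only use, in the proof of \cref{thm:CylindricalWallControlledByTangle}. There it is applied with the large parameter $f_{\ref{thm:GridTheorem:WellLinkedSet}}(3k-1)$, and only to separations of order less than $k$. So the range you flagged is one the paper does not actually treat. The argument above would establish the lemma in the generality in which it is stated.
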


\begin{proof}
    By the definition of $\cB_\tau$ in \cref{lem:BrambleFromTangle} (applied to $S := A \cap B$), there is some $C \in \cB_\tau$ such that $C \subseteq B\setminus A$. Since every $C' \in \cB_\tau$ meets $C$ by the definition of a bramble, every $C' \in \cB_\tau$ meets $B\setminus A \supseteq C$. Thus, if some $C' \in \cB_\tau$ is not contained in $B\setminus A$, it must meet $A \cap B$. This implies that 
    \[
    X' := \big(X \cap (B\setminus A)\big) \cup (A \cap B) = (X \setminus A) \cup (A \cap B)
    \]
    is a cover of $\cB_\tau$. As $X$ is a minimal cover of $\cB_\tau$, it follows that $|X'| \geq |X|$, and hence $|A \cap B| \geq |X \cap A|$.
\end{proof}

It is known that every sizewise minimal cover of a bramble in a digraph is well-linked (see e.g.\ Giannopoulou et al.~\cite[page 27]{GKKKDirectedToT}, who attribute the result to Reed).

\begin{lemma} \label{lem:BrambleCoverIsWellLinked}
    Every sizewise minimal cover of a bramble in a digraph is well-linked.
\end{lemma}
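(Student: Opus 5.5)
We need to show: if $X$ is a sizewise minimal cover of a bramble $\cB$ in $D$, then for all $Y,Z\subseteq X$ with $|Y|=|Z|$ there are $|Y|$ disjoint $Y$–$Z$ paths (and, symmetrically, $Z$–$Y$ paths). The plan is to argue by contradiction via Menger's theorem. By symmetry of the roles of $Y$ and $Z$, it suffices to treat paths from $Y$ to $Z$. Note that a vertex in $Y\cap Z$ counts as a trivial path.

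Suppose there are fewer than $|Y|$ disjoint directed $Y$–$Z$ paths. By the directed version of Menger's theorem, there is a set $S\subseteq V(D)$ with $|S|<|Y|$ meeting every directed path from $Y$ to $Z$. Then $Y\setminus S$ cannot reach $Z\setminus S$ in $D-S$.

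We show that $X' := (X\setminus Y)\cup S$ or $X'' := (X\setminus Z)\cup S$ is a cover of $\cB$. Both have size $<|X|$, since $|S|<|Y|=|Z|$, which contradicts minimality.

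Suppose neither is a cover. Then there is $B_1\in\cB$ avoiding $(X\setminus Y)\cup S$. Since $X$ covers $B_1$, $B_1$ meets $X$, but not in $X\setminus Y$, so $B_1$ contains a vertex $y\in Y\setminus S$. Likewise there is $B_2\in\cB$ avoiding $(X\setminus Z)\cup S$ and containing some $z\in Z\setminus S$.

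Since $B_1\cap B_2\ne\emptyset$, pick $w$ in the intersection. Now $D[B_1]$ is strongly connected and avoids $S$, so it contains a path from $y$ to $w$ in $D-S$. Similarly $D[B_2]$ contains a path from $w$ to $z$ avoiding $S$. Concatenating gives a directed walk, and hence a path, from $Y$ to $Z$ in $D-S$, a contradiction.

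The case $B_1=B_2$ is handled the same way, directly inside $D[B_1]$. The $Z$–$Y$ direction follows by swapping the roles of $Y$ and $Z$.

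The only delicate point is making sure the Menger separator gives a strictly smaller cover. This holds because $|S|<|Y|$ and $Y,Z\subseteq X$ give $|X'|,|X''|\le |X|-|Y|+|S|<|X|$.
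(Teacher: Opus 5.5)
Your proof is correct and follows essentially the same route as the paper. You use Menger to get a separator $S$ with $|S|<|Y|$, minimality of $X$ to get bramble sets avoiding $(X\setminus Y)\cup S$ and $(X\setminus Z)\cup S$, and the fact that these sets must intersect to reach a contradiction. The only cosmetic difference is that the paper phrases Menger as a directed separation $\{A,B\}$ and puts the two bramble sets in the disjoint strict sides $A\setminus B$ and $B\setminus A$, whereas you route a path avoiding $S$ through their common vertex.
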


\arXivOrNot{For the sake of completeness, we include a proof here.
{\begin{proof}
    Let $\cB$ be a bramble of a digraph~$D$ of order $k \in \N$, and let $X$ be a cover of $\cB$ of size~$k$. 

    Suppose for a contradiction that $X$ is not well-linked, i.e.\ there exist sets $Y, Z \subseteq X$ with $|Y| = |Z| =: \ell$ such that there are no $\ell$ pairwise vertex-disjoint paths from~$Z$ to~$Y$. By Menger's Theorem in digraphs, there exists a separation $\{A,B\}$ of $D$ of order $<\ell$ such that $Y \subseteq A$ and $Z \subseteq B$ and no edge crosses from $B$ to $A$.

    The set $(X \setminus Y) \cup (A \cap B)$ has size $\leq |X| - |Y| + |A \cap B| \leq k - \ell + (\ell-1) = k-1 < |X|$. By the minimality of $X$, there is a set $C \in \cB$ that avoids $(X\setminus Y) \cup (A \cap B)$. Since $X$ meets $C$ but $(X \setminus Y) \cup (A \cap B)$ avoids~$C$, the set $(C \cap Y) \setminus (A \cap B)$ is non-empty. In particular, $C$ meets $Y\setminus (A\cap B) \subseteq A \setminus B$. As $C$ is strongly connected and avoids $A \cap B$, it follows that $C \subseteq A\setminus B$.

    A symmetric argument yields that there is a set $C' \in \cB$ such that $C' \subseteq B\setminus A$. This contradicts the definition of a bramble as $A\setminus B$ and $B\setminus A$ are disjoint, and thus $C$ and $C'$ are disjoint.
\end{proof}}%
}{For the sake of completeness, we included a proof in the arXiv version of this paper.}

We now prove \cref{thm:CylindricalWallControlledByTangle} by combining \cref{thm:GridTheorem:WellLinkedSet} with \cref{lem:BrambleFromTangle,lem:BrambleCoverIsWellLinked,lem:CoverOfBrambleLiesOnBigSide}.

\begin{proof}[Proof of \cref{thm:CylindricalWallControlledByTangle}]
    We prove the assertion with the function $f:\N\to\N$ defined by 
    \[
    \defnm{f}(k) := 2\cdot f_{\ref{thm:GridTheorem:WellLinkedSet}}(3k-1)-1,
    \]
    where $f_{\ref{thm:GridTheorem:WellLinkedSet}}$ is the function from \cref{thm:GridTheorem:WellLinkedSet}.
    
    Let $\defnm{\cB} := \cB_\tau$ be the bramble obtained from applying \cref{lem:BrambleFromTangle} with parameter $k':=f_{\ref{thm:GridTheorem:WellLinkedSet}}(3k-1)$ to~$\tau$, and let $X$ be a sizewise minimal cover of $\cB$. Note that $\cB$ has order at least $k'= f_{\ref{thm:GridTheorem:WellLinkedSet}}(3k-1)$, and thus $|X| \geq f_{\ref{thm:GridTheorem:WellLinkedSet}}(3k-1)$. 
    By \cref{lem:BrambleCoverIsWellLinked}, $X$ is well-linked. Therefore, by \cref{thm:GridTheorem:WellLinkedSet}, there exists a cylindrical wall~$W'$ in~$D$ of order~$3k-1$ such that for every set~$U$ of $3k-1$ vertices of~$W'$ of total degree at least~$3$ (in~$W'$), there are $3k-1$ vertex-disjoint directed paths from~$X$ to $U$ and from $U$ to $X$ in $D$.
    
    Let \defn{$W$} be the subwall of~$W'$ of order~$3k-2$ that is obtained from $W'$ by removing all the vertices of the first column of $W'$ (and afterwards removing all vertices of total degree~$1$ until there are no more vertices of total degree~$1$) and removing all vertices and edges of the last and penultimate row of~$W'$ that are not contained in a column of~$W'$.
    (Note that the vertices of $W'$ that are contained in a row of $W'$ but not in any column are precisely those vertices of degree~$2$ in $W'$ that subdivide an edge of the elementary wall that is not contained in any column.)
    We claim that $W$ has the property claimed in the theorem statement.
    To verify this, consider any separation $(A,B) \in \tau$ of order less than~$k$. We first show the following claim.

    \begin{claim} \label{claim:GridThm:ULiesInB}
        $|U \cap (B \setminus A)| \geq k$ for every set $U$ of at least $3k-2$ vertices of~$W$ that have total degree~$3$ in~$W'$.
    \end{claim}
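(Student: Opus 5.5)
The plan is to argue by contradiction using Menger's theorem, the linkage property of $W'$ from \cref{thm:GridTheorem:WellLinkedSet}, and the bound $|A\cap X|\le |A\cap B|$ from \cref{lem:CoverOfBrambleLiesOnBigSide}. Fix $(A,B)\in\tau$ of order $<k$ and a set $U$ of at least $3k-2$ vertices of $W$ with total degree $3$ in $W'$. Suppose for contradiction that $|U\cap(B\setminus A)|\le k-1$. Since $|A\cap B|\le k-1$, the set $U\cap(A\setminus B)$ then has at least $3k-2-2(k-1)=k$ vertices. Choose a subset $U_0\subseteq U\cap (A\setminus B)$ with $|U_0|=k$.

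To apply \cref{thm:GridTheorem:WellLinkedSet}, which requires exactly $3k-1$ vertices of total degree at least $3$ in $W'$, we pad $U_0$. Add $2k-1$ further vertices of degree $3$ in $W'$, giving a set $U'\supseteq U_0$ with $|U'|=3k-1$. This is possible because $W'$ has far more than $3k-1$ such vertices. The theorem then yields $3k-1$ vertex-disjoint directed paths from $X$ to $U'$ and $3k-1$ from $U'$ to $X$. Restricting to the $k$ paths of each family that have an endpoint in $U_0$, we obtain $k$ disjoint paths from $X$ into $U_0\subseteq A\setminus B$ and $k$ disjoint paths from $U_0$ to $X$.

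Next we use the direction of $\{A,B\}$. Since $\{A,B\}$ is a directed separation, edges cross between $A\setminus B$ and $B\setminus A$ in only one direction. Whichever direction it is, one of the two path families must stay inside $A$ unless it meets $A\cap B$. Concretely, if no edge crosses from $B$ to $A$, then a path ending in $A\setminus B$ whose start lies in $B\setminus A$ must pass through $A\cap B$. The symmetric statement holds in the other case.

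So in the relevant family, at most $|A\cap B|\le k-1$ of the $k$ disjoint paths meet $A\cap B$. Hence at least one path avoids $A\cap B$ and has its endpoint in $X$ lying in $A\setminus B$. To get the contradiction, I would count more carefully with Menger-type accounting. The $k$ disjoint paths have $k$ distinct endpoints in $X$. Each endpoint lies in $X\cap A$ or on a path through the separator. Therefore
\[
k\le |X\cap A|+|A\cap B| \le 2|A\cap B|\le 2k-2.
\]
This is not yet a contradiction, so the sharper input needed is a stronger count.

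To fix this, I would enlarge $U_0$ from the start rather than sharpen the counting. The hypothesis $|U|\ge 3k-2$ and the assumption $|U\cap(B\setminus A)|\le k-1$ give $|U\cap(A\setminus B)|\ge k$, which is too few. So I would instead apply \cref{thm:GridTheorem:WellLinkedSet} to all of $U\cap A$ padded to size $3k-1$. Those paths whose $U$-endpoint lies in $A\setminus B$ number at least $3k-2-(k-1)-(k-1)=k$. Paths of the constrained direction that end in $X\setminus A$ must each use a distinct separator vertex, so at most $k-1$ of them do so. The remaining paths, at least one, end in $X\cap (A\setminus B)$. Combining this with $|X\cap A|\le |A\cap B|$ and the separator usage gives a total demand of at least $k$ distinct vertices, either in $X\cap A$ or in $A\cap B$, with each separator vertex used at most once across both. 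Matching this against $|X\cap A|+|A\cap B|\le 2(k-1)$ is where care is needed.

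The main obstacle is this final accounting, making the inequalities close exactly. If it falls short, I would exploit that the paths start at distinct vertices and that paths ending in $X\cap A\setminus B$ do not use the separator. Then $k\le |X\cap(A\setminus B)|+|A\cap B|$, and the bound $|X\cap A|\le|A\cap B|$ can be refined using that separator vertices lying in $X$ count on both sides. If even this is insufficient, I would use the disjointness of both path families simultaneously.
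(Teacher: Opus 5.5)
Your ingredients are the right ones: the linkage from \cref{thm:GridTheorem:WellLinkedSet}, the bound $|A\cap X|\le|A\cap B|\le k-1$ from \cref{lem:CoverOfBrambleLiesOnBigSide}, and the fact that disjoint paths crossing the separation against its direction must each use a separator vertex. But the argument does not close, as you acknowledge. The inequality $k\le |X\cap(A\setminus B)|+|A\cap B|\le 2k-2$ is no contradiction. Your ``refined'' accounting also only ever considers at least $k$ paths, so it cannot close either.

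The loss happens in your first step. You pass from $|U\cap(B\setminus A)|\le k-1$ to $|U\cap(A\setminus B)|\ge k$ and then look only at paths ending in $A\setminus B$. This discards the up to $k-1$ vertices of $U$ lying in $A\cap B$, although paths ending there are just as costly. Each path ending in $A$ consumes either a start vertex in $X\cap A$ (at most $k-1$ available) or a separator vertex (at most $k-1$ available). Your $k$ paths therefore fit comfortably into the $2k-2$ available slots. No refinement of $|X\cap A|\le |A\cap B|$ can remove this slack, since $X\cap A$ may have $k-1$ vertices disjoint from $A\cap B$.

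The fix is to count all paths that end in $A$, or, as the paper does, to count from the $X$-side. Suppose no edge crosses from $B$ to $A$, and take $3k-2$ disjoint paths from $X$ to $U$. Since $|X\cap A|\le k-1$, at least $2k-1$ of them start in $X\setminus A\subseteq B\setminus A$. Each of these that ends in $A$ must meet $A\cap B$, so at most $k-1$ do. Hence at least $k$ end in $B\setminus A$, which gives $|U\cap(B\setminus A)|\ge k$.

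In your contradiction format: under $|U\cap(B\setminus A)|\le k-1$, at least $2k-1$ paths end in $A$, but at most $|X\cap A|+|A\cap B|\le 2k-2$ can. The case where no edge crosses from $A$ to $B$ is symmetric, using the paths from $U$ to $X$. Using both path families simultaneously is not needed.
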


    \begin{claimproof}
        Let us first assume that no edge crosses from $B$ to $A$.
        By the choice of $W'$ via \cref{thm:GridTheorem:WellLinkedSet}, there is a collection $\cP$ of $3k-2$ vertex-disjoint directed paths from~$X$ to~$U$. 
        By \cref{lem:CoverOfBrambleLiesOnBigSide} and because $(A,B) \in \tau$, at most $|A \cap B| \leq k-1$ vertices of~$X$ lie in~$A$, which implies that at least $(3k-2)-(k-1) = 2k-1$ of the paths in $\cP$ start in $X \cap (B\setminus A)$. As no edge crosses from $B$ to $A$, any path in~$\cP$ that starts in~$B$ and ends in~$A$ has to meet $A \cap B$. 
        Since $|A \cap B| \leq k-1$, it follows that at least $(2k-1)-(k-1) = k$ paths in $\cP$ end in $B \setminus A$, and thus $|U \cap (B\setminus A)| \geq k$. 

        Analogously, if no edge crosses from~$A$ to~$B$, then we can use the $3k-2$ vertex-disjoint paths from~$U$ to~$X$ (whose existence is again guaranteed by \cref{thm:GridTheorem:WellLinkedSet}) to conclude that $|U \cap (B \setminus A)| \geq k$. 
    \end{claimproof}
    
    We first show that at least one column of $W$ is contained in $D[B\setminus A]$.
    For this, let $U \subseteq V(W)$ be a set consisting of one vertex of each column of~$W$ such that every vertex in $U$ has degree~$3$ (in~$W'$). Note that $W$ has $3k-2$ columns, and thus $|U| = 3k-2$. By \cref{claim:GridThm:ULiesInB}, we have $|U \cap (B \setminus A)| \geq k$. By the choice of $U$, it follows that there are at least~$k$~columns~$C_{i_1}, \dots, C_{i_k}$ of~$W$ that meet $B\setminus A$.
    Observe that the columns~$C_{i_j}$ of~$W$ are directed cycles and pairwise vertex-disjoint. Since $|A \cap B| \leq k-1$ and every column~$C_{i_j}$ meets $B\setminus A$, it follows that at least one column $C_{i_j}$ avoids $A$ and is thus contained in $D[B\setminus A]$ (because $\{A,B\}$ is a directed separation and each column $C_{i_j}$ is strongly connected).
    \medskip

    To see that at least one row of $W$ is contained in $D[B\setminus A]$, pick a set $U$ consisting of one vertex of every second row of~$W$ as follows. If no edge crosses from $A$ to $B$, then we choose the first vertices of the second row, fourth row, and so on. Otherwise, if no edge crosses from $B$ to $A$, then we choose the first vertices of the first row, third row, and so on.
    Note that $W$ has $6k-4$ rows, and thus $|U| = 3k-2$. Moreover, note that by the definition of~$W$, all vertices in~$U$ have degree~$3$ in~$W'$ (even though they have only degree~$2$ in~$W$). 
    Hence, we may apply \cref{claim:GridThm:ULiesInB} to~$U$, which implies that $|U \cap (B\setminus A)| \geq k$.
    
    Therefore, by the choice of~$U$, there are at least~$k$~rows $R_{i_1}, \dots, R_{i_k}$ of~$W$ that meet $B\setminus A$ (in their vertex contained in~$U$). 
    Note that we chose $U$ so that each row~$R_{i_j}$, for $j \in [k]$, is directed \emph{towards} its vertex in~$U$ if no edge crosses from~$A$ to~$B$, and \emph{away} from its vertex in~$U$ if no edge crosses from~$B$ to~$A$. Therefore, if some row~$R_{i_j}$, for $j \in [k]$ meets $A$, then it also meets $A \cap B$. Since $|A \cap B| \leq k-1$ and the rows~$R_{i_j}$ are pairwise vertex-disjoint, it follows that at least one row~$R_{i_j}$ is contained in $D[B\setminus A]$, as desired. Having shown that $D[B\setminus A]$ contains a row and a column of $W$, we may conclude the proof.
\end{proof}

\section{Putting it all together (Proof of \texorpdfstring{\cref{main:CyclesDistinctLengths}}{Theorem 2})} \label{sec:proof}

Having assembled all the necessary ingredients, we are now finally ready for the proof of our main result.

\begin{proof}[Proof of~\cref{main:CyclesDistinctLengths}]
In the remainder of this proof, fix $\defnm{f}:\mathbb{N}\rightarrow \mathbb{N}$ to be the function $f_{\ref{thm:CylindricalWallControlledByTangle}}$ given by \cref{thm:CylindricalWallControlledByTangle} and $\defnm{\rho}:\mathbb{N}\times \mathbb{N}\rightarrow \mathbb{N}$ as well as $\defnm{\alpha}:\mathbb{N}\rightarrow \mathbb{N}$ to be the functions $\rho_{\ref{thm:flatwall}}, \alpha_{\ref{thm:flatwall}}$ given by \cref{thm:flatwall}.

Let $\defnm{g}:\mathbb{N}\rightarrow \mathbb{N}$ be any function satisfying the following three properties:
\begin{itemize}
    \item For every $k\in \mathbb{N}, k\ge 2$, we have $g(k)\ge 4\cdot\max\{g(k-1),k\}$. 
    \item For every $k\in \mathbb{N}, k\ge 2$, we have 
    $$g(k-1)>f\left(\max\left\{\rho\left(3k+2,\frac{k^2+3k}{2}\right),\alpha\left(\frac{k^2+3k}{2}\right)+1\right\}\right).$$
    \item For every $k\in \mathbb{N}, k\ge 2$, we have $g(k)\ge \alpha\left(\frac{k^2+3k}{2}\right)+7k-5$.
\end{itemize}
Clearly, such a function $g$ exists. We now claim that this function satisfies the statement of the theorem, i.e.\ for every $k\in \mathbb{N}$ it holds that every digraph $D$ with $\delta^+(D)\ge g(k)$ contains $k$ vertex-disjoint directed cycles of distinct lengths.

Suppose towards a contradiction that this statement does not hold, and let $\defnm{k} \in \mathbb{N}$ be chosen minimal such that it fails. Let \defn{$D$} be some digraph certifying this, i.e.\ such that $\delta^+(D)\ge g(k)$, but such that $D$ does not contain any collection of $k$ vertex-disjoint directed cycles of distinct lengths. Observe that we must have $k\ge 2$: Trivially, $\delta^+(D)\ge g(k)\ge 1$ implies that $D$ contains a directed cycle, so we cannot have $k=1$. 

\begin{claim}\label{claim:nodisjointhighdegrees}
    There exist no two vertex-disjoint non-empty subdigraphs $D_1, D_2$ of $D$ such that $\delta^+(D_1)\ge g(k-1)$ and $\delta^+(D_2)\ge k$.
\end{claim}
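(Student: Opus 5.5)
We argue by contradiction, using the minimality of $k$ together with \cref{lem:trains}, exactly as indicated in the proof sketch. Suppose that $D_1, D_2$ are vertex-disjoint non-empty subdigraphs of $D$ with $\delta^+(D_1)\ge g(k-1)$ and $\delta^+(D_2)\ge k$. Since $k\ge 2$, we have $k-1\in\mathbb{N}$. By the minimality of $k$, the statement of the theorem holds for $k-1$. As $\delta^+(D_1)\ge g(k-1)$, we may therefore apply it to the digraph $D_1$ itself. This yields $k-1$ vertex-disjoint directed cycles $Z_1,\dots,Z_{k-1}$ in $D_1$ of pairwise distinct lengths $\ell_1,\dots,\ell_{k-1}$.

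Next we apply \cref{lem:trains} to $D_2$, which is allowed because $\delta^+(D_2)\ge k$. It gives $k$ directed cycles $Z'_1,\dots,Z'_k$ in $D_2$, not necessarily disjoint, of pairwise distinct lengths. These $k$ distinct lengths cannot all lie in the set $\{\ell_1,\dots,\ell_{k-1}\}$, which has only $k-1$ elements. So some $Z'_j$ has length different from every $\ell_i$.

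Since $Z'_j\subseteq D_2$ and $Z_1,\dots,Z_{k-1}\subseteq D_1$, and $D_1,D_2$ are vertex-disjoint, the cycle $Z'_j$ is vertex-disjoint from each $Z_i$. Hence $Z_1,\dots,Z_{k-1},Z'_j$ are $k$ vertex-disjoint directed cycles of pairwise distinct lengths in $D$. This contradicts the choice of $D$.

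There is no real obstacle here. The only points to check are that $k-1\ge 1$, so that the minimality of $k$ applies to the parameter $k-1$, and that the theorem for $k-1$ may be applied to the subdigraph $D_1$ as a digraph in its own right. The latter holds because the hypothesis of the theorem concerns only the minimum out-degree of $D_1$.
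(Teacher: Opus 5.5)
Your proof is correct and follows the paper's own proof essentially step for step. You use minimality of $k$ to get $k-1$ disjoint cycles of distinct lengths in $D_1$, apply \cref{lem:trains} to $D_2$, pick by pigeonhole a cycle whose length is new, and get disjointness from $V(D_1)\cap V(D_2)=\emptyset$.
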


\begin{claimproof}
    Suppose towards a contradiction that such subdigraphs $D_1, D_2$ existed. By our minimal choice of~$k$, and since $k\ge 2$, every digraph of minimum out-degree at least $g(k-1)$ contains $(k-1)$ vertex-disjoint directed cycles of distinct lengths. In particular, there exist $(k-1)$ vertex-disjoint directed cycles $C_1,\ldots,C_{k-1}$ in $D_1$ whose lengths are pairwise distinct. Furthermore, by~\cref{lem:trains}, there exist $k$ directed cycles in $D_2$ with pairwise distinct lengths (not necessarily disjoint). In particular, one of these $k$ cycles must have a length that differs from all lengths of cycles in $\{C_1,\ldots,C_{k-1}\}$. Let $C_k$ be such a directed cycle in $D_2$. Since $D_1, D_2$ are vertex-disjoint, $C_1,\ldots,C_k$ forms a collection of $k$ vertex-disjoint directed cycles in~$D$ of distinct lengths, contradicting our initial assumptions about $D$. This concludes the proof of the claim. 
\end{claimproof}

In the remainder of the proof, we set $\defnm{d} := 4\max\{g(k-1),k\}$ and denote by \defn{$\tau$} the set comprising
\begin{itemize}
    \item $(A,B)$ for all proper separations $\{A,B\}$ of $D$ of order $<d/4$ such that no edge crosses from $B$ to $A$, and
    \item $(A,V(D))$ for all $A\subseteq V(D)$ of size $<d/4$.
\end{itemize}
We next observe the following immediate consequence of~\cref{lem:DefaultOrientationIsATangle}.
\begin{claim}\label{claim:defaultorientationtangle}
$\tau$ is a $(d/4)$-tangle of $D$.
\end{claim}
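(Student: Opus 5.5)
The plan is to apply \cref{lem:DefaultOrientationIsATangle} directly with the parameter $d = 4\max\{g(k-1),k\}$. Two things need checking: its hypotheses hold, and the set $\tau$ defined here coincides with the set $\tau$ in that lemma.

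First, the numerical hypotheses. By the first property of $g$ we have $g(k)\ge 4\max\{g(k-1),k\} = d$, so $\delta^+(D)\ge g(k)\ge d$. Since $k\ge 2$, we also get $d\ge 4k\ge 8\ge 4$. Moreover $d/4 = \max\{g(k-1),k\}$ is a natural number, so speaking of a $(d/4)$-tangle makes sense. The set $\tau$ defined just before the claim is verbatim the set from \cref{lem:DefaultOrientationIsATangle} with this $d$.

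Second, the structural hypothesis: $D$ must not contain two vertex-disjoint subdigraphs of minimum out-degree at least $d/4$. Suppose $D_1, D_2$ were such subdigraphs. Minimum out-degree is only meaningful for non-empty digraphs, and a subdigraph with $\delta^+\ge d/4\ge 1$ is necessarily non-empty, so both are non-empty. Then $\delta^+(D_1)\ge d/4\ge g(k-1)$ and $\delta^+(D_2)\ge d/4\ge k$, which contradicts \cref{claim:nodisjointhighdegrees}.

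With both hypotheses in place, \cref{lem:DefaultOrientationIsATangle} shows that $\tau$ is a $(d/4)$-tangle. The argument is routine; the only point needing care is the non-emptiness convention in \cref{claim:nodisjointhighdegrees}, which the bound $d/4\ge 1$ handles.
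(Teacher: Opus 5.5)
Your proposal is correct and follows the paper's own proof. It obtains $\delta^+(D)\ge g(k)\ge d$ from the first property of $g$ and rules out two disjoint subdigraphs of minimum out-degree at least $d/4$ via \cref{claim:nodisjointhighdegrees}, using $d/4\ge g(k-1)$ and $d/4\ge k$. It then invokes \cref{lem:DefaultOrientationIsATangle}, and your added checks that $d\ge 4$ and that such subdigraphs are non-empty are harmless extra care.
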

\begin{claimproof}
\cref{claim:nodisjointhighdegrees} and the definition of $d$ imply that there are no two vertex-disjoint subdigraphs of $D$ with minimum out-degree at least $d/4$. Our choice of $g$ and the definition of $d$ imply further that $\delta^+(D)\ge g(k)\ge d$. The claim now follows from~\cref{lem:DefaultOrientationIsATangle}.
\end{claimproof}

We next use~\cref{thm:CylindricalWallControlledByTangle} to obtain a large cylindrical wall in $D$ which is controlled by $\tau$.

\begin{claim}\label{claim:wall}
    There exists a cylindrical wall $W$ of order at least $\rho\left(3k+2,\frac{k^2+3k}{2}\right)$ in $D$ such that for every proper directed separation $\{X,Y\}$ of $D$ of order at most $\alpha\left(\frac{k^2+3k}{2}\right)$ such that no edges cross from $Y$ to $X$ in $D$, there exist a row and a column of $W$ that are contained in $D[Y\setminus X].$
\end{claim}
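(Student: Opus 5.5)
We apply \cref{thm:CylindricalWallControlledByTangle} to the tangle $\tau$ from \cref{claim:defaultorientationtangle}. Set
\[
m := \max\left\{\rho\left(3k+2,\tfrac{k^2+3k}{2}\right),\ \alpha\left(\tfrac{k^2+3k}{2}\right)+1\right\}.
\]
By the second property of $g$ we have $g(k-1) > f(m)$, and hence $d/4 = \max\{g(k-1),k\} \ge g(k-1) > f(m)$.

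First we pass from $\tau$ to a smaller-order tangle. Restricting a $(d/4)$-tangle to separations of order $< f(m)$ gives an $f(m)$-tangle $\tau'$. Indeed, (T1) is inherited because $\tau$ orients every separation of order $<d/4$, which includes all those of order $<f(m)$. Property (T2) holds trivially for a subset. So \cref{thm:CylindricalWallControlledByTangle} applies with parameter $m$. It yields a cylindrical wall $W\subseteq D$ of order $3m-2\ge m\ge \rho(3k+2,\frac{k^2+3k}{2})$ such that, for every separation $\{A,B\}$ of order $<m$, we have $(A,B)\in\tau'$ if and only if some row and some column of $W$ lie in $D[B\setminus A]$.

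Now let $\{X,Y\}$ be a proper separation of order at most $\alpha(\frac{k^2+3k}{2})$ such that no edge crosses from $Y$ to $X$. Its order is $<m$, and also $<f(m)<d/4$. Since the separation is proper and no edge crosses from $Y$ to $X$, the definition of $\tau$ gives $(X,Y)\in\tau$, hence $(X,Y)\in\tau'$. The equivalence from \cref{thm:CylindricalWallControlledByTangle} then yields a row and a column of $W$ inside $D[Y\setminus X]$, which is exactly the claim.

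Two checks need care. The first is the order of $W$: the theorem gives order exactly $3m-2$, which is at least the required bound, and this is sufficient because the claim only asks for order at least $\rho(\cdot)$. The second is the boundary case where the separation also has no edge crossing from $X$ to $Y$. Then $\tau$ also contains $(Y,X)$, which would clash with (T2). This cannot happen: \cref{claim:nodisjointhighdegrees} rules it out, as the text preceding \cref{lem:DefaultOrientationIsATangle} shows. Moreover, the fact that $\tau$ is a tangle already guarantees that only one orientation is included. So the main thing to verify is the bookkeeping of the parameter inequalities ($f(m)<d/4$ and $\alpha(\cdot)<m$). Beyond that, the argument is a direct application of the theorem.
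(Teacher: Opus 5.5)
Your proposal is correct and follows the paper's own proof. It uses the same parameter $m$ (the paper's $k'$), the same restriction of $\tau$ to separations of order $<f(m)$, and the same appeal to \cref{thm:CylindricalWallControlledByTangle} via $(X,Y)\in\tau$; the one step left implicit (also in the paper) is that the order of $\{X,Y\}$ is below $f(m)$, which needs $m\le f(m)$. This holds because $f(m)=2f_{\ref{thm:GridTheorem:WellLinkedSet}}(3m-1)-1$ and $f_{\ref{thm:GridTheorem:WellLinkedSet}}(j)\ge j$ is forced: a complete digraph on fewer than $4j^2$ vertices has a well-linked vertex set but no cylindrical wall of order $j$.
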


\begin{claimproof}
    Set $k':=\max\left\{\rho\left(3k+2,\frac{k^2+3k}{2}\right),\alpha\left(\frac{k^2+3k}{2}\right)+1\right\}$. 

    By our choice of $g$, we have that $f(k')<g(k-1)\leq d/4$. Hence, we may apply~\cref{thm:CylindricalWallControlledByTangle} to the restriction of $\tau$ to separations of order less than $f(k')$. We thus find that there exists a cylindrical wall $W$ of order $3k'-2\ge k'\ge \rho\left(3k+2,\frac{k^2+3k}{2}\right)$ in $D$ such that for every proper directed separation $\{X,Y\}$ of $D$ of order at most $\alpha\left(\frac{k^2+3k}{2}\right)$ (and thus, in particular, less than $k'$), such that no edges in $D$ cross from $Y$ to $X$, there exist a row and a column of~$W$ that are contained in $D[Y\setminus X]$. This establishes the assertion of the claim.
\end{claimproof}

Next, we combine the local Directed Flat Wall Theorem (\cref{thm:flatwall}) with~\cref{lem:butterfly} to find a weakly flat subwall of $W$.

\begin{claim}\label{claim:subwall}
    There exists a set of vertices $A\subseteq V(D)$ with $|A|\le \alpha\left(\frac{k^2+3k}{2}\right)$ and a cylindrical subwall $W'\subseteq W$ of order $3k+2$ such that $W'$ is weakly flat in $D-A$.
\end{claim}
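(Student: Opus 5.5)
We apply \cref{thm:flatwall} with $r := 3k+2$ and $t := \frac{k^2+3k}{2}$ to the digraph $D$ and the cylindrical wall $W$ from \cref{claim:wall}. Note first that $t$ is a natural number, since $k^2+3k = k(k+3)$ is always even. By \cref{claim:wall}, $W$ has order at least $\rho\left(3k+2,\frac{k^2+3k}{2}\right) = \rho_{\ref{thm:flatwall}}(r,t)$, so the hypothesis of \cref{thm:flatwall} is met. (If $W$ has order larger than required, we apply the theorem directly, since its hypothesis only asks for order \emph{at least} $\rho_{\ref{thm:flatwall}}(r,t)$.) Consequently one of the two outcomes \ref{itm:FlatWall:CliqueMinor} or \ref{itm:FlatWall:Wall} holds.

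Suppose first that outcome \ref{itm:FlatWall:CliqueMinor} occurs, i.e.\ $D$ contains $\bivec{K}_t$ as a butterfly minor with $t = \frac{k^2+3k}{2}$. Then \cref{lem:butterfly} yields $k$ vertex-disjoint directed cycles of pairwise distinct lengths in $D$. This contradicts the choice of $D$ as a counterexample, so this outcome is impossible.

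Hence outcome \ref{itm:FlatWall:Wall} must hold. It provides a set $A \subseteq V(D)$ with $|A| \le \alpha_{\ref{thm:flatwall}}(t) = \alpha\left(\frac{k^2+3k}{2}\right)$, together with a cylindrical wall $W' \subseteq W - A$ of order $r = 3k+2$ that is weakly flat in $D - A$. Since $W' \subseteq W - A \subseteq W$, the wall $W'$ is a subwall of $W$, which is exactly what the claim asserts.

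There is no real obstacle here. The only points to verify are that the parameters match the bound guaranteed in \cref{claim:wall} and that $t$ is an integer satisfying the threshold $t \ge \frac{k^2+3k}{2}$ of \cref{lem:butterfly}; both checks are immediate.
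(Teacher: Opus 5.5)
Your proposal is correct and follows the paper's proof essentially verbatim: you apply \cref{thm:flatwall} with $r=3k+2$ and $t=\frac{k^2+3k}{2}$ to the wall from \cref{claim:wall}, rule out the butterfly-minor outcome via \cref{lem:butterfly} and the choice of $D$ as a counterexample, and read off $A$ and $W'$ from the remaining outcome. Your check that $t=\frac{k(k+3)}{2}$ is an integer is a small detail the paper leaves implicit.
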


\begin{claimproof}
    By \cref{claim:wall}, the cylindrical wall $W$ in $D$ has order at least $\rho\left(3k+2,\frac{k^2+3k}{2}\right)$. \cref{thm:flatwall} now implies that either $D$ contains $\bivec{K}_t$ as a butterfly minor, where $t:=\frac{k^2+3k}{2}$, or that a set~$A$ of vertices and a cylindrical subwall~$W'$ of~$W$ with the desired properties stated in the claim exist. However, the first case is impossible since it would, by~\cref{lem:butterfly}, imply that $D$ contains $k$ vertex-disjoint directed cycles of distinct lengths, contradicting our initial assumptions on~$D$. Hence, the second case has to hold, and hence the set~$A$ and cylindrical subwall~$W'$ with the desired properties indeed exist, establishing the claim. 
\end{claimproof}

In the remainder of this proof, let us define \defn{$R$} as the set of all vertices which are reachable from $V(W')$ in $D-A$. In other words, these are all the vertices in $W'$ together with all vertices that are endpoints of directed paths in $D-A$ starting in some vertex of $W'$. Let us observe that every vertex $v\in R$ satisfies that $N_D^+(v)\setminus A\subseteq R$. This immediately implies that the subdigraph $\defnm{D'} := D[R]$ induced by $R$ has minimum out-degree at least $\delta^+(D)-|A|\ge g(k)-\alpha\left(\frac{k^2+3k}{2}\right)\ge 7k-5$, where the last inequality follows from our initial choice of the function $g$.

Next, we use \cref{lem:walltrains} to prove that the digraph $D'$ cannot be strongly connected.
\begin{claim}\label{claim:disconnected}
    $D'$ is not strongly connected.
\end{claim}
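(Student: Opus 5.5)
We argue by contradiction: suppose $D'=D[R]$ is strongly connected, and apply \cref{lem:walltrains} to $D'$ with the wall $W'$. Then $D'$ contains $k$ vertex-disjoint directed cycles of pairwise distinct lengths. These are also cycles in $D$, contradicting the choice of $D$. So the whole task is to check that the hypotheses of \cref{lem:walltrains} hold for $D'$ and $W'$.

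First, $W'\subseteq D'$. Indeed, $W'\subseteq W-A$ by \cref{claim:subwall}, and $V(W')\subseteq R$ by the definition of $R$. Next, $W'$ has order $3k+2$ by \cref{claim:subwall}. Finally, we have already observed that $\delta^+(D')\ge 7k-5$: every $v\in R$ satisfies $N^+_D(v)\setminus A\subseteq R$, and $|A|\le\alpha\left(\frac{k^2+3k}{2}\right)$, so the bound follows from the third property of $g$.

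It remains to check that $W'$ is weakly flat in $D'$. Let $P$ be a directed path in $D'$ with endpoints $x,y$ such that $V(P)\cap V(W')=\{x,y\}$ and at least one of $x,y$ lies in $\mathrm{int}(W')$. Since $D'=D[R]$ and $R\cap A=\emptyset$, the path $P$ is also a directed path in $D-A$ with the same intersection with $W'$. Weak flatness of $W'$ in $D-A$ from \cref{claim:subwall} then gives a brick of $W'$ containing both $x$ and $y$. Bricks are defined intrinsically in terms of $W'$, so they do not depend on the ambient digraph, and hence $W'$ is weakly flat in $D'$.

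No step is a real obstacle, since weak flatness is inherited by subdigraphs of $D-A$ that contain the wall. The only care needed is to confirm that $R$ avoids $A$, so that $D'\subseteq D-A$. This holds because $R$ consists of vertices of $W'\subseteq W-A$ together with endpoints of paths in $D-A$.
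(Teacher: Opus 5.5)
Your proposal is correct and follows essentially the same route as the paper: assume $D'$ is strongly connected, note that $W'\subseteq D'\subseteq D-A$ so weak flatness is inherited from $D-A$, use $\delta^+(D')\ge 7k-5$, and apply \cref{lem:walltrains} to get a contradiction. Your extra checks (that $R\cap A=\emptyset$ and that bricks are intrinsic to $W'$) are just the details the paper leaves implicit.
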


\begin{claimproof}
    Towards a contradiction, suppose that $D'$ is strongly connected. Note that $W' \subseteq D'$ by definition of $R$, and recall that $W'\subseteq D-A$ is a cylindrical wall of order $3k+2$ which is weakly flat in $D-A$. As $D' \subseteq D-A$, it follows that $W'$ is also weakly flat in~$D'$. Since $D'$ is strongly connected and satisfies $\delta^+(D')\ge 7k-5$ as argued above, we may now apply \cref{lem:walltrains} to $D'$ and the wall $W'$ contained in it, and find that $D'$ (and thus $D$) contains $k$ vertex-disjoint directed cycles of pairwise distinct lengths. This contradicts our initial assumptions on $D$, and hence $D'$ cannot be strongly connected.
\end{claimproof}

Finally, we combine the previous claims to obtain the desired final contradiction, which then concludes the proof of the theorem. By~\cref{claim:disconnected}, there exists a partition $S\; \dot\cup\; T = R$ of the vertex set~$R$ of $D'=D[R] \subseteq D-A$ into two non-empty sets~\defn{$S$} and~\defn{$T$} such that there are no edges in~$D$ starting in~$T$ and ending in~$S$. Let us now define $\defnm{Y} := T\cup A$ and $\defnm{X} := V(D) \setminus T =  S\cup (V(D)\setminus R)$. 
\smallskip

We claim that $\{X,Y\}$ is a directed separation of~$D$. Clearly, $X\cup Y=V(D)$, and so it suffices to show that no edge of~$D$ crosses from $Y$ to~$X$. Indeed, suppose towards a contradiction that there existed some edge $e=(u,v)\in E(D)$ with $u\in Y\setminus X=T$ and $v\in X\setminus Y=S\cup (V(D)\setminus (R\cup A))$. Then, since by assumption no edge of $D$ starts in $T$ and ends in $S$, we must have $v\in V(D)\setminus (R\cup A)$. However, since $u\in T\subseteq R$ we have $v\in N_D^+(u)\setminus A\subseteq R$ (by definition of $R$), a contradiction to $v\in V(D)\setminus (R\cup A)$. 

Hence, no edge crosses from $Y$ to $X$, and $\{X,Y\}$ is a directed separation of $D$. 
Furthermore, observe that $X\cap Y=A$ and that $\emptyset\neq S\subseteq X\setminus Y, \emptyset\neq T\subseteq Y\setminus X$, so in fact $\{X,Y\}$ is a proper separation. 
\smallskip

Since $W'$, as a cylindrical wall, is strongly connected, and because $W' \subseteq D-A = D - (X \cap Y)$, we must have either $W' \subseteq D[Y\setminus X]$ or $W' \subseteq D[X\setminus Y]$.
We next show that in fact $W' \subseteq D[Y\setminus X]$. For this, observe that since $X \cap Y = A$, the order of the separation $\{X,Y\}$ is $|A|\le \alpha\left(\frac{k^2+3k}{2}\right)$. We may thus apply \cref{claim:wall} to $\{X,Y\}$ to find that there exists a row and a column of $W$ that are both contained in $D[Y\setminus X]$. By \cref{obs:wallintersection}, it follows that $V(W') \cap (Y\setminus X) \neq \emptyset$, which by our earlier observation implies that $W' \subseteq D[Y\setminus X]$.
\smallskip

We now arrive at our final contradiction. For this, recall that $S$ is non-empty, and consider any vertex $s\in S$. Then $s\in R$, and therefore, by the definition of $R$, there exists a directed path in $D-A$ starting in $V(W')\subseteq Y\setminus X$ and ending in $s \in S \subseteq X \setminus Y$. But this path must use an edge from $Y\setminus X$ to $X\setminus Y$ (as it avoids $A = X \cap Y$), contradicting that no edge of $D$ crosses from $Y$ to $X$ as shown above.

This is the desired contradiction, which shows that our initial assumption in this proof, namely that there exist a number $k\in \mathbb{N}$ and a digraph $D$ with $\delta^+(D)\ge g(k)$ and no $k$ vertex-disjoint directed cycles of distinct lengths, was false. We thereby conclude the proof of the theorem. 
\end{proof}
\section{Weighted generalisations}\label{sec:weighted}
In this section, we supply the proofs of the weighted consequences of~\cref{main:CyclesDistinctLengths}, namely~\cref{cor:vertexweights,cor:edgeweights}. We start with the proof of~\cref{cor:vertexweights}, from which~\cref{cor:edgeweights} can then be swiftly deduced.

\begin{proof}[Proof of~\cref{cor:vertexweights}]
Let $k\in\mathbb{N}$, a digraph $D$ with $\delta^+(D)\ge g(k)$, and a strictly positive vertex-weighting $w:V(D)\rightarrow \mathbb{R}_+$ be given to us. Our goal is to show that there exists a collection $C_1,\ldots,C_k$ of $k$ vertex-disjoint directed cycles in $D$ such that for all distinct $i,j\in [k]$, we have $\sum_{v\in V(C_i)}w(v)\neq \sum_{v\in V(C_j)}w(v)$.

In what follows, we first prove this under the additional assumption that $w$ is integral, i.e.\ $w(v)\in \mathbb{N}$ for all $v\in V(D)$. After that, we come back to the general case (which we will reduce to the integral case).
\medskip

So suppose that $w$ is integral. Now, form an auxiliary digraph \defn{$D_w$} as follows. All vertices of $D$ will also be vertices of $D_w$. Then, for every vertex $v\in V(D)$ with $w(v)\ge 2$, we add  precisely $w(v)-1$ pairwise disjoint sets $V_1^v,\ldots,V_{w(v)-1}^v$ of fresh vertices (disjoint from $V(D)$ and from any previously added vertices), where $|V_1^v|=\cdots=|V_{w(v)-1}^v|=g(k)$. Having done this for all $v\in V(D)$ with $w(v)\ge 2$, we have defined all the vertices of~$D_w$. We call the vertices in $V(D)$ the \defn{original} vertices and the vertices in $V(D_w)\setminus V(D)$ the \defn{new} vertices. Next, we define the edges of~$D_w$:

For every vertex $v\in V(D)$ with $w(v)=1$, we have the same edges as in the original digraph, i.e.\ we add all the edges $(v,u)$, for $u \in N_D^+(v)$, to~$D_w$. For all $v\in V(D)$ with $w(v)\ge 2$, we instead add the following edges: $(v,u)$ for every $u\in V_1^v$, then for all $1\le i<w(v)-1$ (if any) we add all ordered pairs in $V_i^v\times V_{i+1}^v$ as edges, and finally all the ordered pairs in $V_{w(v)-1}^v\times N_D^+(v)$. This finishes the description of the edges of~$D_w$.\looseness=-1
\smallskip

Observe that since $\delta^+(D)\ge g(k)$, the digraph~$D_w$ defined in this way has minimum out-degree at least~$g(k)$. Hence, we may apply~\cref{main:CyclesDistinctLengths} to it and obtain a collection $C_1,\ldots,C_k$ of $k$ vertex-disjoint directed cycles in $D_w$ of pairwise distinct lengths.

Now fix any $t\in [k]$. Let $\ell_t$ denote the length of $C_t$, and $u_0u_1u_2\cdots u_{\ell_t}=u_0$ the cyclic sequence of vertices along $C_t$ such that $(u_{r-1},u_r)\in E(D_w)$ for every $r\in [\ell_t]$. Note that $D_w-V(D)$ is acyclic, and hence $C_t$ must use at least one original vertex. Let $r_1<\cdots<r_s$ for some $s\ge 1$ be the complete enumeration of all indices $r$ such that $u_r$ is an original vertex. It then follows directly from the definition of $D_w$ that for every $i\in [s-1]$, we have that $u_{r_{i+1}}\in N_D^+(u_{r_i})$. Similarly, $u_{r_1}\in N_D^+(u_{r_s})$. Hence, the cyclic vertex-sequence $u_{r_1}\ldots u_{r_s}u_{r_1}$ defines a directed cycle in $D$, which we denote as $C_t'$. It can furthermore be easily seen from the definition of $D_w$ that the segment of $C_t$ from $u_{r_i}$ to $u_{r_{i+1}}$ has length exactly $w(u_{r_i})$, for every $1\le i\le s-1$, and analogously the segment of $C_t$ from $u_{r_s}$ to $u_{r_1}$ has length exactly $w(u_{r_s})$. Since these segments disjointly decompose the edges of $C_t$, we find that $\ell_t=\sum_{i=1}^{s}w(u_{r_i})=\sum_{v\in V(C_t')}w(v)$. 

Since by choice of $C_1,\ldots,C_k$ we have that $\ell_1,\ldots,\ell_k$ are pairwise distinct, it follows that the total weights of $C_1',\ldots,C_k'$ according to $w$ are pairwise distinct, as well. Furthermore, since $V(C_t')\subseteq V(C_t)$ for every $t\in [k]$ by definition and since $C_1,\ldots,C_k$ are vertex-disjoint, it follows that also the directed cycles $C_1',\ldots,C_k'$ in $D$ are pairwise disjoint, and hence form a cycle collection with the desired properties in $D$. This concludes the proof in the case that $w$ is integral.
\medskip

Now, let us go back to the general case $w:V(D)\rightarrow\mathbb{R}_+$. Suppose towards a contradiction that there do not exist $k$ vertex-disjoint directed cycles in $D$ with pairwise distinct weights according to $w$. Possibly by multiplying all weights with a sufficiently large positive scalar, we may without loss of generality assume that $w(v)\ge 1$ for all $v\in V(D)$. Now, let $\mathcal{K}$ denote the set of all (unordered) collections of $k$ vertex-disjoint directed cycles in~$D$. Note that $\mathcal{K}$ is finite. Our assumption about $w$ implies that for every collection $K\in \mathcal{K}$ there exist two distinct cycles $C_K^1,C_K^2\in K$ such that $\sum_{v\in V(C_K^1)}w(v)=\sum_{v\in V(C_K^2)}w(v)$. 

Hence, the (finite) system of linear equalities and inequalities

\begin{align*}
\sum_{v\in V(C_K^1)}x_v-\sum_{v\in V(C_K^2)}x_v&=0,\,\,\,\,\,\forall K\in \mathcal{K},
\\
    x_v&\ge 1,\,\,\,\,\forall v\in V(D),
\end{align*}
is feasible. Note that all the coefficients and the right hand side of these inequalities are rational (in fact integers). Hence, by standard polyhedral theory there must also exist a \emph{rational} feasible solution of the system. Take such a rational solution and multiply all its entries by a positive common multiple of all the denominators of the fractions representing its entries. Since scaling a solution to the above system by a positive integer maintains feasibility, it follows that the system in fact has an integral solution. In other words, there exists a strictly positive integer-weighting $w':V(D)\rightarrow \mathbb{N}$ of $D$ such that $$\sum_{v\in V(C_K^1)}w'(v)=\sum_{v\in V(C_K^2)}w'(v)$$ for all $K\in \mathcal{K}$. This immediately implies that there does not exist a collection of $k$ vertex-disjoint directed cycles in $D$ that have pairwise distinct weights with respect to $w'$. However, this contradicts the fact that we already proved the assertion of the corollary for the case of integral weightings. This contradiction shows that our initial assumption about the weighting $w:V(D)\rightarrow \mathbb{R}_+$ was false. This concludes the proof of the corollary.
\end{proof}

Finally, we deduce~\cref{cor:edgeweights}.

\begin{proof}[Proof of~\cref{cor:edgeweights}]
Let $k,m\in \mathbb{N}$, a digraph $D$ with $\delta^+(D)\ge g(k)\cdot m$, a set $S\subseteq \mathbb{R}_+$ with $|S|=m$, and an edge-weighting $w:E(D)\rightarrow S$ be given to us. We must show that there exist $k$ vertex-disjoint directed cycles in $D$ with pairwise distinct total edge-weights according to $w$. 

For this, define a spanning subdigraph $D'$ of $D$ as follows. For every vertex $v\in V(D)$, note that since $g(k)\cdot m\le d^+(v)=\sum_{s\in S}|\{(v,u)\in E(D):w(v,u)=s\}|$ and $|S|=m$, by the pigeon hole principle there must exist some $s_v\in S$ such that at least~$g(k)$ arcs starting in~$v$ have weight~$s_v$. We then keep only those arcs in~$D'$ and discard all others.

It is clear that the spanning subdigraph $D'\subseteq D$ defined in this way has minimum out-degree at least~$g(k)$. Furthermore, observe that if we define the strictly positive vertex-weighting~$w'$ on $D'$ by $w'(v):=s_v$ for every $v\in V(D')=V(D)$, we have that the total edge-weight of any directed cycle in~$D'$ with respect to~$w$ equals the total vertex-weight of the same cycle with respect to~$w'$. Hence, when we apply~\cref{cor:vertexweights} to~$D'$ to obtain $k$ vertex-disjoint directed cycles in~$D'$ with pairwise distinct total vertex-weights according to~$w'$, these will also have pairwise distinct edge-weights according to~$w$, while also forming a collection of $k$ vertex-disjoint directed cycles in~$D$. This concludes the proof of the corollary.
\end{proof}

\section{Concluding remarks} \label{sec:conc}

We conclude with the natural open problem of determining the smallest possible function $g:\mathbb{N}\rightarrow \mathbb{N}$ for which~\cref{main:CyclesDistinctLengths} holds. The only lower bound we are aware of is $\frac{k^2+3k-2}{2}$, which can be obtained by observing that every digraph containing $k$ vertex-disjoint directed cycles of distinct lengths must contain at least $2+3+\cdots+k+(k+1)=\frac{k^2+3k}{2}$ vertices, and to enforce this one needs to require at least an out-degree of $\frac{k^2+3k}{2}-1=\frac{k^2+3k-2}{2}$, as otherwise a complete digraph of suitable size would form a counterexample to the statement (see also \cite{bensmail}).

\nopagebreak[4]
\begin{problem}
    What is the smallest function $g:\mathbb{N}\rightarrow \mathbb{N}$ such that every digraph of minimum out-degree at least $g(k)$ contains $k$ vertex-disjoint directed cycles of distinct lengths?
\end{problem}

\section*{Statement on the use of AI}

The whole paper has been written entirely by the human authors, and no AI was used in the writing process. Concerning the generation of the mathematical ideas, all of the presented proof ideas are fully due to the authors, with the sole exception of the proof presented in \cref{subsec:CylindWallControlledByTangle}, for which we used assistance from ChatGPT 5.5 Pro. Concretely, ChatGPT suggested to use \cref{thm:GridTheorem:WellLinkedSet} and \cref{lem:BrambleCoverIsWellLinked}, and it proved \cref{lem:BrambleCoverIsWellLinked}.

\section*{Acknowledgments}

We dearly thank Dan Kr\'{a}l, Filip Kučerák, and Lina Simbaqueba for inspiring and helpful discussions. In particular, we thank Dan Kr\'{a}l for asking a good question that inspired us to prove~\cref{cor:vertexweights,cor:edgeweights}. We also thank Irene Muzi for insights into the Directed Grid Theorem, and Sebastian Wiederrecht for insights into the Directed Grid Theorem and the Directed Flat Wall Theorem. The second author would like to thank Dan Kr\'{a}l and Leipzig University for the hospitality during a research stay, where the presented research was initiated.

\printbibliography

\end{document}